\newcounter{dummy} \numberwithin{dummy}{section}
\newtheorem*{theorem*}{Theorem}
\newtheorem{theorem}[dummy]{Theorem}
\newtheorem{prop}[dummy]{Proposition}
\newtheorem{lemma}[dummy]{Lemma}
\newtheorem{corollary}[dummy]{Corollary}
\newtheorem{question}{Question}
\theoremstyle{definition}
\newtheorem{definition}[dummy]{Definition}
\newtheorem{example}[dummy]{Example}
\newtheorem{con}[dummy]{Construction}
\newtheorem{remark}[dummy]{Remark}
\newcommand{\spec}{\mathop{\mathbf{Spec}}\nolimits}
\newcommand{\tor}{\mathop{\rm Tor}\nolimits}
\newcommand{\id}{\mathop{\rm \textbf{id}}\nolimits}
\newcommand{\Md}{\mathop{\rm Mod}\nolimits}
\newcommand{\ann}{\mathop{\rm ann}\nolimits}
\newcommand{\thick}{\mathop{\rm Thick}\nolimits}
\newcommand{\da}{\dagger}
\newcommand{\ass}{\mathop{\rm ass}\nolimits}
\newcommand{\perf}{\mathop{\rm Perf}\nolimits}
\newcommand{\cone}{\mathop{\rm cone}\nolimits}
\newcommand{\spc}{\mathop{\mathbf{Spc}}\nolimits}
\newcommand{\hspc}{\mathop{\mathbf{HSpc}}\nolimits}
\newcommand{\supp}{\mathop{\mathbf{supp}}\nolimits}
\newcommand{\smsupp}{\mathop{\mathbf{supp}_{smash}}\nolimits}
\newcommand{\Supp}{\mathop{\mathbf{Supp}}\nolimits}
\newcommand{\loc}{\mathop{\rm loc}\nolimits}
\newcommand{\lspc}{\mathop{\mathbf{Lspc}}\nolimits}
\newcommand{\bigmt}{\bigwedge}
\newcommand{\bigjn}{\bigvee}
\newcommand{\sT}{\mathop{\rm T}\nolimits}
\newcommand{\wass}{\mathop{\widetilde{\rm Ass}}\nolimits}
\newcommand{\aspec}[1]{{}^{a}\!{#1}}
\newcommand{\FRAME}{\mathop{\rm \textit{\textbf{Frm}}}\nolimits}
\newcommand{\TOP}{\mathop{\rm \textit{\textbf{Top}}}\nolimits}
\newcommand{\SPAT}{\mathop{\rm \textit{\textbf{Spt}}}\nolimits}
\newcommand{\SOB}{\mathop{\rm \textit{\textbf{Sob}}}\nolimits}
\newcommand{\Hck}[3]{{\rm \check{H}}^{#1}_{#2}\!\left(#3\right)}
\newcommand{\HH}[3]{{\rm H}^{#1}_{#2}\!\left(#3\right)}
\newcommand{\KK}[2]{{\rm K}_{#1}\!\left(#2\right)}
\newcommand{\DD}[1]{{\rm D}\!\left(#1\right)}
\newcommand{\Ze}[1]{\mathbf{Z}\!\left(#1\right)}
\newcommand{\wco}[1]{{#1}^{\mathbf{c}}}
\newcommand{\co}[1]{{#1}^{{\rm c}}}
\newcommand{\Neu}[1]{\mathbb{N}\!\left(#1\right)}
\newcommand{\Ab}[1]{\mathbb{D}\!\left(#1\right)}
\newcommand{\skula}[1]{{\rm \bf skula}\!\left(#1\right)}
\newcommand{\Fr}[1]{\mathbb{F}\!\left(#1\right)}
\newcommand{\TT}[1]{\mathbb{T}\!\left(#1\right)}
\newcommand{\hm}[3]{{\rm Hom}_{#1}\left(#2,#3\right)}
\newcommand{\im}[1]{{\rm Im}\!\left(#1\right)}
\newcommand{\Ac}[1]{\mathcal{A}\!\left(#1\right)}
\newcommand{\HKinf}[3]{{\rm H}^{#1}{\rm K}_\infty\!\left(#2\colon\!#3\right)}
\newcommand{\HKinfbig}[3]{{\rm H}^{#1}{\rm K}_\infty\!\big(#2\colon\!#3\big)}
\newcommand{\Kinf}[2]{{\rm K}_\infty\!\left(#1\colon\!#2\right)}
\newcommand{\HHbig}[3]{{\rm H}^{#1}_{#2}\!\big(#3\big)}
\newcommand{\sbe}{\subseteq}
\newcommand{\sbne}{\subsetneq}
\newcommand{\tns}{\otimes}
\newcommand{\xto}{\xrightarrow}
\newcommand{\mt}{\land}
\newcommand{\jn}{\lor}
\newcommand{\op}{\oplus}
\newcommand{\lns}{\otimes^\mathbf{L}}
\newcommand{\ZZ}{\mathbb{Z}}
\newcommand{\CC}{\mathbb{C}}
\newcommand{\NN}{\mathbb{N}}
\providecommand{\bf}{\mathbbm{f}}
\newcommand{\bp}{\mathbbm{p}}
\newcommand{\bq}{\mathbbm{q}}
\newcommand{\bx}{\mathbbm{x}}
\newcommand{\by}{\mathbbm{y}}
\newcommand{\bB}{\mathbb{B}}
\newcommand{\bF}{\mathbb{F}}
\newcommand{\bL}{\mathbb{L}}
\newcommand{\bM}{\mathbb{M}}
\newcommand{\bN}{\mathbb{N}}
\newcommand{\bS}{\mathbb{S}}
\newcommand{\bT}{\mathbb{T}}
\newcommand{\bX}{\mathbb{X}}
\newcommand{\bY}{\mathbb{Y}}
\newcommand{\bZ}{\mathbb{Z}}
\newcommand{\cL}{\mathcal{L}}
\newcommand{\T}{\mathcal{T}}
\newcommand{\cD}{\mathcal{D}}
\newcommand{\cI}{\mathcal{I}}
\newcommand{\cJ}{\mathcal{J}}
\newcommand{\cP}{\mathcal{P}}
\newcommand{\cS}{\mathcal{S}}
\newcommand{\cT}{\mathcal{T}}
\newcommand{\cX}{\mathcal{X}}
\newcommand{\cY}{\mathcal{Y}}
\newcommand{\mm}{\mathfrak{m}}
\newcommand{\p}{\mathfrak{p}}
\newcommand{\q}{\mathfrak{q}}
\newcommand{\gp}{\mathfrak{p}}
\newcommand{\gX}{\mathfrak{X}}
\newcommand{\wc}{\mathbf{c}}
\newcommand{\wf}{\mathbf{f}}
\newcommand{\wg}{\mathbf{g}}
\newcommand{\wl}{\boldsymbol{\ell}}
\newcommand{\ws}{\mathbf{s}}
\newcommand{\wC}{\mathbf{C}}
\newcommand{\wD}{\mathbf{D}}
\newcommand{\wI}{\mathbf{I}}
\newcommand{\wR}{\mathbf{R}}
\newcommand{\wS}{\mathbf{S}}
\newcommand{\wU}{\mathbf{U}}
\newcommand{\wV}{\mathbf{V}}
\newcommand{\wW}{\mathbf{W}}
\newcommand{\wX}{\mathbf{X}}
\newcommand{\wY}{\mathbf{Y}}
\newcommand{\al}{\alpha}
\newcommand{\be}{\beta}
\newcommand{\ga}{\gamma}
\newcommand{\Ga}{\Gamma}
\newcommand{\ld}{\lambda}
\newcommand{\sm}{\sigma}
\newcommand{\ph}{\varphi}
\newcommand{\Sm}{\Sigma}
\title[Support in tensor triangulated categories]{Support and vanishing for non-Noetherian \\ rings and tensor triangulated categories}
\author{William T.\ Sanders}
\begin{document}

\maketitle

\begin{abstract}
We define and characterise small support for complexes over non-Noetherian rings  and in this context prove a vanishing theorem for modules.  Our definition of  support makes sense for any rigidly compactly generated tensor triangulated category.  Working in this generality, we establish basic properties of support and investigate when it detects vanishing.  We use pointless topology to relate support, the topology of the Balmer spectrum, and the structure of the idempotent Bousfield lattice.  
\end{abstract}



\section{Introduction}

In this article, we propose the following definition. 

\begin{definition}\label{introdef}

For an arbitrary commutative ring $R$ and a complex $M$ of $R$-modules, we say $\p\in \spec R$ is in $\supp M$ if for every finite subset  ${\underline{x}=x_1,\dots,x_n\in \p}$,  the cohomology
\[\HKinf{i}{\underline{x}}{M_\p}=\HHbig{i}{}{ (R\to R_{x_1})\tns\cdots\tns(R\to R_{x_n})\tns M_\p}\ne 0\]
does not vanish for some $i\in \ZZ$.

\end{definition}

We justify this definition with the following result, which is Theorem \ref{main6} in the text.  

\begin{theorem}\label{intro1} 

Let $M$ be a complex over a commutative ring $R$.  Suppose that either
\begin{itemize}
\item $H^i(M)=0$ for $i\ll 0$, e.g.\ $M$ is a module
\item or the prime ideals of $R$ satisfy the descending chain condition.  
\end{itemize}
Then $\supp M=\emptyset$ if and only if $M=0$.  
\end{theorem}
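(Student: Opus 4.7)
The ``only if'' direction is immediate from the definition. For the nontrivial direction I argue contrapositively: assuming some $H^i(M)\ne 0$, I produce a prime $\p\in \supp M$. Write $\Kinf{\underline{x}}{M}:=(R\to R_{x_1})\tns\cdots\tns(R\to R_{x_n})\tns M$, so that by definition $\HKinf{i}{\underline{x}}{M}$ is its $i$-th cohomology. The strategy in both cases is to pick a carefully chosen prime $\p$, localize at $\p$, and then show $\Kinf{\underline{x}}{M_\p}$ has nonzero cohomology for every finite $\underline{x}\sbe\p$. What differs between the cases is the choice of $\p$ and the mechanism of the reduction.

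For case (ii), the DCC on $\spec R$ lets me pick $\p$ minimal with $M_\p\ne 0$. Replacing $R,M$ by $R_\p,M_\p$, every cohomology module has $\Supp H^i(M)\sbe\{\mm\}$, which forces $\sqrt{\ann(n)}=\mm$ for every nonzero $n\in H^i(M)$: any prime over $\ann(n)$ lies in $V(\ann(n))=\Supp(Rn)\sbe\{\mm\}$. Consequently every element of $H^i(M)$ is $x$-torsion for each $x\in\mm$, so $M\tns_R R_x$ is acyclic. The fundamental triangle
\[
\Kinf{x}{M}\to M\to M\tns R_x
\]
in $\DD(R)$ then collapses to a quasi-isomorphism $\Kinf{x}{M}\simeq M$. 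Iterating over $x_1,\dots,x_n$ gives $\Kinf{\underline{x}}{M}\simeq M$, whose cohomology is nonzero; hence $\p\in\supp M$.

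For case (i), the bounded-below hypothesis lets me set $i_0=\min\{i:H^i(M)\ne 0\}$ and pick nonzero $n\in H^{i_0}(M)$. As $\ann(n)\sbne R$, Zorn's lemma yields a prime $\p$ minimal over $\ann(n)$, i.e.\ a weakly associated prime of $H^{i_0}(M)$. After localizing, $n/1\in H^{i_0}(M_\p)$ is nonzero, and the minimality of $\p$ over $\ann(n)$ forces $\p R_\p$ to be the unique prime of $R_\p$ above $\ann_{R_\p}(n/1)$, giving $\sqrt{\ann_{R_\p}(n/1)}=\p R_\p$. For any finite $\underline{x}\sbe\p$ one then finds a common $k$ with $x_i^k(n/1)=0$ for all $i$, so $n/1$ lies in $\HKinf{0}{\underline{x}}{H^{i_0}(M_\p)}=\bigcap_i\ker\bigl(H^{i_0}(M_\p)\to H^{i_0}(M_\p)_{x_i}\bigr)$. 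To promote this to nonvanishing of $\HKinf{i_0}{\underline{x}}{M_\p}$, I invoke the hypercohomology spectral sequence
\[
E_2^{p,q}=\HKinf{p}{\underline{x}}{H^q(M_\p)}\Longrightarrow \HKinf{p+q}{\underline{x}}{M_\p},
\]
which converges strongly because $\Kinf{\underline{x}}{R}$ is a bounded complex. The vanishing $E_2^{p,q}=0$ for $q<i_0$ makes $E_\infty^{0,i_0}=E_2^{0,i_0}$ the only contribution to $\HKinf{i_0}{\underline{x}}{M_\p}$, yielding an isomorphism $\HKinf{i_0}{\underline{x}}{M_\p}\cong\HKinf{0}{\underline{x}}{H^{i_0}(M_\p)}$, which is nonzero.

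The main obstacle I expect is case (i) without DCC, because the support $\Supp H^{i_0}(M)$ need not contain any minimal element: a descending chain of primes $\p_1\spe\p_2\spe\cdots$ in $\Supp N$ can have intersection $\p$ with $N_\p=\varinjlim N_{\p_i}=0$. The correct substitute is a weakly associated prime (a minimal prime over the annihilator of a single element), which exists by Zorn regardless of any Noetherian assumption and allows the radical-of-annihilator computation to proceed. The secondary subtlety is the convergence of the hypercohomology spectral sequence; strong convergence holds here because $\Kinf{\underline{x}}{R}$ is bounded, making the column filtration on the total complex finite in each degree.
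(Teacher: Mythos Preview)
Your proof is correct and follows essentially the same strategy as the paper's proof of Theorem~\ref{main6}: in the bounded-below case you choose a weakly associated prime of the lowest nonvanishing cohomology and identify $\HKinf{i_0}{\underline{x}}{M_\p}$ with the $\underline{x}$-torsion of $H^{i_0}(M_\p)$, while in the DCC case you localize at a minimal prime so that all cohomology becomes $\mm$-power torsion and $\Kinf{\underline{x}}{M}\simeq M$. The only cosmetic differences are that the paper obtains the edge identification by an iterated long exact sequence rather than the hypercohomology spectral sequence, and in the DCC case it selects a minimal \emph{weakly associated} prime of $\bigoplus_i H^i(M)$ rather than a minimal prime of $\Supp M$; both choices lead to the same torsion reduction.
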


 In \cite{Foxby79}, assuming  $R$ is commutative Noetherian ring and $M$ a complex of $R$-modules,   Foxby defined  $\supp M$ as the primes $\p$ such that $M\lns k(\p)\ne 0$ where $k(\p)$ is the residue field $R_\p/\p R_\p$.  In this work, Foxby showed that this support detected vanishing, i.e. $\supp M=\emptyset$ if and only if $M=0$. By  \cite[(2.1) and (4.1)]{FoxbyIyengar03}, our definitions of support coincide in the Noetherian case.

Support is a powerful tool for  triangulated categories.  In \cite{Neeman92}, Neeman classified the localising subcategories of   $\DD{R}$ in terms of support.  In \cite{BensonIyengarKrause08},  Benson, Iyengar, and Krause  construct a support theory in a compactly generated triangulated category acted upon by a Noetherian ring.  In \cite{BensonIyengarKrause11b}  they used this support theory to classify the localising subcategories of the stable category  of modular group representations.  

For non-Noetherian rings, Foxby's theory of supports breaks down: there are modules $M$ such that $M\lns k(\p)=0$ for all $\p\in \spec R$; see Example \ref{Jan}.  Moreover,  Neeman's classification of localising subcategories fails spectacularly for non-Noetherian rings; see  \cite{Neeman00} or more generally \cite{DwyerPalmieri08}.

 In \cite{BalmerFavi11} Balmer and Favi define support in certain rigidly compactly generated tensor triangulated categories.  Their support takes values in the Balmer spectrum of the compact objects, and their definition is valid whenever this space is topologically Noetherian.  Greg Stevenson studied this support in \cite{Stevenson13b} and applied these results to the derived category of an absolutely flat ring in \cite{Stevenson14} and \cite{Stevenson17}. These results suggest that even though Neeman's classification fails, support detects some semblance of order in the localising subcategories.

%
%
%
%
%
%
Following the example of Balmer, Favi, and Stevenson, we study support in rigidly compactly generated tensor triangulated categories.  We introduce the following definition.  See Section \ref{TTpre} for relevant definitions.  

\begin{definition}
Let $\T$ be a rigidly compactly generated tensor triangulated category.  For a prime in the Balmer spectrum $\p\in \spc \T^c$ and an object $T\in \T$, we say $\p\in \supp T$ if for every Thomason subset $\wV\sbe \spc \T$ with $\p\in \wV$, 
\[\Ga_\wV T_\p\ne 0.\] 
We call $\T$ \textit{supportive} if support detects vanishing, i.e.\ $\supp T=\emptyset$ if and only if $T=0$.  
\end{definition}

When $\T=D(R)$, this definition specialises to Definition \ref{introdef}; see Lemma \ref{ringsup}.  When $\spc \T^c$ is topologically Noetherian, our definition coincides with Balmer and Favi's in  \cite{BalmerFavi11}.  

Unfortunately, we do not know if $\T$ is always supportive.  But when it is, our support exhibits strong properties and  behaves similarly to the support developed by Benson, Iyengar, and Krause in \cite{BensonIyengarKrause08}.   See  Theorems  \ref{main1} and \ref{main2}.   Moreover, our support is the only reasonable support function taking values in $\spec R$; see Theorem \ref{main3}.
We  summarise these results below.

\begin{theorem}\label{intro2} 
Let $T\in \T$, and consider the following properties of a function 
\[\ws\colon\T\to \{\mbox{\rm Subsets of } \spc \T^c\}.\]
\begin{enumerate}
\item\label{Vanishing} \textbf{Vanishing}: $\ws(T)=\emptyset$ if and only if $T=0$. 
\item\label{Local} \textbf{Local}: If $\wV\sbe\spc \T^c$ is Thomason and $T$ is $\wV$-local, then $\ws(T)\cap \wV=\emptyset$.
\item\label{Big Support} \textbf{Big Support}: $\ws(T)\sbe \Supp T$ for any compact object $T\in \T^c$.
\item\label{Orthogonality} \textbf{Orthogonality}: For any $S\in \T$, if there is a Thomason subset $\wV\sbe\spc \T^c$  such that $\ws(T)\sbe \wV$ and $\ws(S)\cap \wV=\emptyset$, then 
\[\hm{\T}{T}{S}=0.\]
\item\label{Exactness} \textbf{Exactness}: For any exact triangle $S\to T\to S'\to$ in $\T$,
\[\ws(T)\sbe \ws(S)\cup\ws(S').\]
\item\label{Separation} \textbf{Separation}: For any Thomason subset $\wV\sbe\spc \T^c$, there is an exact triangle
\[T'\to T\to T''\to\]
with $\ws(T')\sbe \wV$ and $\ws(T'')\cap\wV=\emptyset$.
\end{enumerate}
If $\T$ is supportive, then $\supp$ satisfies all of these properties.  Moreover, if a function $\ws$ satisfies all of these properties, then $\ws=\supp$ and $\T$ is supportive.  
%
\end{theorem}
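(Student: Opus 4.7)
For the forward direction, assume $\T$ is supportive. Vanishing is the definition, and Local follows because $T \cong L_\wV T$ implies $\Ga_\wV T = 0$, hence $\Ga_\wU T_\p = 0$ for every Thomason $\wU \sbe \wV$. Exactness is a direct consequence of $\Ga_\wU(-)_\p$ being a triangulated functor: if $\p \in \supp T$ is witnessed by some $\wU$, then in a triangle $S \to T \to S' \to$ the vanishing of both $\Ga_\wU S_\p$ and $\Ga_\wU S'_\p$ would force $\Ga_\wU T_\p = 0$. Big Support is the standard fact that for compact $T$ one has $T_\p = 0$ whenever $\p \notin \Supp T$, so all relevant Thomason colocalizations vanish. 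For Separation, I would use the canonical localization triangle $\Ga_\wV T \to T \to L_\wV T \to$; Local yields $\supp(L_\wV T) \cap \wV = \emptyset$, while $\supp(\Ga_\wV T) \sbe \wV$ follows because, for $\p \notin \wV$, one can choose a Thomason $\wU \ni \p$ disjoint from $\wV$, and $L_\wU \Ga_\wV = 0$ then forces $\Ga_\wU(\Ga_\wV T)_\p = 0$.

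Orthogonality carries the main content of this direction. Suppose $\supp T \sbe \wV$ and $\supp S \cap \wV = \emptyset$. Applying Exactness to $\Ga_\wV S \to S \to L_\wV S \to$, together with Local, yields $\supp(\Ga_\wV S) \sbe \supp S \cap \wV = \emptyset$, so supportiveness forces $\Ga_\wV S = 0$ and $S$ is $\wV$-local. Dually $T \cong \Ga_\wV T$, whence $\hm{\T}{T}{S} = \hm{\T}{\Ga_\wV T}{L_\wV S} = 0$ by the standard adjunction for smashing localizations.

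For the converse, suppose $\ws$ satisfies all six properties. The key step is to show that the Separation triangle for $T$ at $\wV$ coincides, up to canonical isomorphism, with $\Ga_\wV T \to T \to L_\wV T \to$. I would first use Orthogonality with Local to verify that every $\wV$-supported object $Z$ satisfies $\ws(Z) \sbe \wV$: apply Separation to $Z$ to get $Z' \to Z \to Z'' \to$ with $\ws(Z') \sbe \wV$ and $\ws(Z'') \cap \wV = \emptyset$; by Orthogonality, $Z'$ is left-orthogonal to every $\wV$-local object, hence lies in $\Ga_\wV \T$, while $Z''$ is itself $\wV$-local; universality of localization then yields $Z'' \cong L_\wV Z = 0$ and $Z \cong Z'$. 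Applied to an arbitrary $T$, this identifies the Separation triangle with the canonical one, and Exactness applied to both orientations (using shift-invariance $\ws(T[1]) = \ws(T)$, which itself follows from Exactness on $T \to 0 \to T[1] \to$ in both directions) extracts the identity $\ws(\Ga_\wV T) = \ws(T) \cap \wV$. To finish, I would combine this identity over the Thomason sets $\wV \ni \p$ from the definition of $\supp$, with Vanishing for $\ws$ applied to the objects $\Ga_\wV T_\p$, obtaining $\p \in \ws(T) \iff \Ga_\wV T_\p \ne 0$ for all Thomason $\wV \ni \p$, i.e.\ $\ws = \supp$; supportiveness of $\T$ is then Vanishing for $\ws$. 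The main obstacle is the bootstrap lemma: the naive proof of $\ws(\Ga_\wV T) \sbe \wV$ wants to invoke Orthogonality via the containment we are trying to prove, and the way out is to first characterise $\wV$-supported objects as the left orthogonals of $\wV$-locals, a formal consequence of Orthogonality and Local alone that does not presuppose any identification of triangles.
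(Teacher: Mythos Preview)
Your forward direction is mostly sound, but the argument for $\supp(\Ga_\wV T)\sbe\wV$ under Separation has a genuine gap: for $\p\notin\wV$ there need not exist a Thomason $\wU$ containing $\p$ and disjoint from $\wV$. Thomason sets are specialization-closed, so any such $\wU$ contains $\bar\p$; in $\spec\ZZ$ with $\wV=\{(2)\}$ and $\p=(0)$, every Thomason set containing $(0)$ is all of $\spec\ZZ$. The fix is to use the localization side of the definition of support rather than the $\Ga$-side: since $\p\notin\wV$ forces $\wV\sbe\Ze{\p}$, one has $(\Ga_\wV T)_\p=L_{\Ze{\p}}\Ga_\wV T=0$, and this alone witnesses $\p\notin\supp(\Ga_\wV T)$. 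Equivalently, in the two-variable formulation, $\Ga_{\spc\T^c}L_\wV(\Ga_\wV T)=0$ with $\p\in\spc\T^c\cap\wco{\wV}$.

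Your converse follows the same route as the paper, but the ``bootstrap lemma'' detour is unnecessary and stems from a circularity worry that does not actually arise. To identify the Separation triangle $T'\to T\to T''\to$ with the canonical one you never need $\ws(\Ga_\wV T)\sbe\wV$ beforehand: you use $\ws(T')\sbe\wV$, which is given directly by Separation, together with Local and Orthogonality to see that $T'$ is left-orthogonal to every $\wV$-local object and hence $\wV$-acyclic. For the other half you need Big Support (which you do not mention): for compact $Y$ with $\Supp Y\sbe\wV$ one has $\ws(Y)\sbe\wV$, so Orthogonality gives $\hm{\T}{Y}{T''}=0$ and $T''$ is $\wV$-local. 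Only \emph{after} concluding $T'\cong\Ga_\wV T$ do you read off $\ws(\Ga_\wV T)=\ws(T')=\ws(T)\cap\wV$. The paper proceeds exactly this way, citing Krause for the fact that the left orthogonal of the $\wV$-locals is the $\wV$-acyclics and then using an octahedral-axiom comparison to conclude the two decompositions of $T$ coincide; your appeal to ``universality of localization'' amounts to the same thing.
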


We know of may instances where $\T$ is supportive and none where it is not.    The following result is Corollary \ref{main4cor}, Theorem \ref{main5}, and Theorem \ref{main6}.

\begin{theorem}\label{intro3} 
A rigidly compactly generated tensor triangulated category $\T$ is supportive in the following cases.
 \begin{enumerate}
 \item The idempotent Bousfield lattice of $\T$ is a spatial frame.
 \item The Hochster dual of the Balmer spectrum $\spc \T^c$ is weakly scattered: for every Thomason set $\wU\sbe \spc \T^c$, there is a point $\p\notin \wU$ and a Thomason subset $\wV$ such that
 \[\{\p\}\sbe \wV\cap \wco{\wU}=\downarrow \p.\]
 \item $\T=\DD{R}$ with $R$ a commutative ring satisfying DCC on prime ideals.  
 \end{enumerate}
 \end{theorem}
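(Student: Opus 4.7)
I would treat the three parts separately, with (3) proven by a direct DCC-minimality argument in $\DD{R}$, (2) by Zorn's lemma combined with the weak scattering hypothesis, and (1) by a point-set frame-theoretic argument.

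\emph{Part (3).} Let $M \ne 0$ in $\DD{R}$. Since $M_\p = 0$ for all $\p \in \spec R$ would force $H^i(M)_\p = 0$ for all $i$ and $\p$, and hence $M = 0$ in $\DD{R}$, some stalk $M_\p$ is nonzero. By DCC on primes pick $\p$ minimal with $M_\p \ne 0$. I claim $\p \in \supp M$. For any $x \in \p$, the primes of $R_\p[x^{-1}]$ correspond to primes $\q \sbne \p$ of $R$, and each $M_\q$ vanishes by minimality; thus $(M_\p)_x = 0$ in $\DD{R_\p[x^{-1}]}$. Consequently the two-term complex $(R \to R_x) \tns M_\p$ is quasi-isomorphic to $M_\p$, and by iterating,
\[
(R \to R_{x_1}) \tns \cdots \tns (R \to R_{x_n}) \tns M_\p \;\simeq\; M_\p
\]
for any finite $\underline{x} \sbe \p$. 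Therefore $\HKinf{*}{\underline{x}}{M_\p} = H^*(M_\p) \ne 0$, giving $\p \in \supp M$.

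\emph{Part (2).} Suppose $T \ne 0$. The Thomason subsets $\wU \sbe \spc \T^c$ with $\Ga_\wU T = 0$ form a collection closed under directed unions (since $\Ga$ respects the relevant filtered colimits), so by Zorn there exists a maximal such $\wU_0$, necessarily proper. Apply weak scattering to $\wU_0$ to obtain $\p \notin \wU_0$ and a Thomason $\wV$ with $\wV \cap \wco{\wU_0} = \downarrow \p$. For any Thomason $\wV' \ni \p$, the union $\wU_0 \cup \wV'$ strictly contains $\wU_0$, so $\Ga_{\wU_0 \cup \wV'} T \ne 0$; feeding this into the Mayer--Vietoris triangle
\[
\Ga_{\wU_0 \cap \wV'} T \to \Ga_{\wU_0} T \op \Ga_{\wV'} T \to \Ga_{\wU_0 \cup \wV'} T \to
\]
together with $\Ga_{\wU_0} T = 0$ forces $\Ga_{\wV'} T \ne 0$. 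The identity $\wV \cap \wco{\wU_0} = \downarrow \p$ is then used to transfer this nonvanishing through the localisation defining $T_\p$, yielding $\Ga_{\wV'} T_\p \ne 0$ and hence $\p \in \supp T$.

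\emph{Part (1).} Assume the idempotent Bousfield lattice is a spatial frame. For $T \ne 0$ the Bousfield class $\la T \ra$ is nonzero, so by spatiality some frame-prime $\pi$ satisfies $\la T \ra \not\leq \pi$. The assignment $\wV \mapsto \la \Ga_\wV \ra$ is a frame homomorphism from the frame of Thomason subsets into the Bousfield lattice, and composing with $\pi$ yields a frame-point of the Thomason frame; by Stone-type duality for sober spectral spaces this is a prime $\p \in \spc \T^c$, characterised by $\wV \ni \p \iff \la \Ga_\wV \ra \not\leq \pi$. For any Thomason $\wV \ni \p$, primality of $\pi$ and the identity $\la \Ga_\wV \tns T \ra = \la \Ga_\wV \ra \wedge \la T \ra$ give $\Ga_\wV T \ne 0$; an additional argument relating $T_\p$ to a filtered construction built from localisations at $\wU$ with $\wU \not\ni \p$ then upgrades this to $\Ga_\wV T_\p \ne 0$, so $\p \in \supp T$.

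\emph{Main obstacle.} The most delicate step in both (1) and (2) is that $\p \in \supp T$ requires nonvanishing of $\Ga_\wV T_\p$, not merely $\Ga_\wV T$. In (2) this is controlled by the identity $\wV \cap \wco{\wU_0} = \downarrow \p$, which pinpoints exactly the primes contributing to $T_\p$; in (1) one must check that the frame-point $\pi$ corresponding to $\p$ interacts correctly with the (possibly non-Thomason) localisation defining $T_\p$. This is precisely where the paper's careful treatment of the idempotent Bousfield lattice and pointless topology is needed.
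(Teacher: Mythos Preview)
Your arguments for (3) and (2) are essentially correct and in fact more direct than the paper's; your argument for (1) has a genuine gap.

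\textbf{Part (3).} The paper argues via weakly associated primes: it shows that every minimal weakly associated prime of $\bigoplus_i H^i(M)$ lies in $\supp M$, and under DCC such primes exist. Your choice of a minimal prime in $\Supp M$ bypasses this machinery entirely and gives the same conclusion $(M_\p)_x=0$ for $x\in\p$ more cheaply. This is a genuine simplification.

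\textbf{Part (2).} The paper's proof is purely frame-theoretic: weak scattering of $\hspc\T^c$ makes the comparison map $\sm_{\hspc\T^c}\colon\Neu{\TT{\T^c}}\to\Fr{\lspc\T}$ an isomorphism, and the universal property of the assembly then produces the map $\eta$ required by Theorem~\ref{main4}(\ref{main43})(c). Your direct approach is a nice elementary alternative. Two comments. First, Zorn is unnecessary: by Theorem~\ref{main4}(\ref{main42}) the assignment $\wV\mapsto\Ac{\Ga_\wV 1}$ preserves arbitrary joins, so the union of all Thomason $\wU$ with $\Ga_\wU T=0$ is itself the maximum such $\wU_0$. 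Second, your acknowledged gap closes cleanly. From $\p\in\wV\cap\wco{\wU_0}\sbe\downarrow\p=\wco{\Ze{\p}}$ and $\wU_0\sbe\Ze{\p}$ one gets $\wV\cap\wco{\wU_0}=\wV\cap\wco{\Ze{\p}}$, so Theorem~\ref{idempotenttriangle}(\ref{main1h}) gives $\Ga_\wV T_\p=\Ga_\wV L_{\wU_0}T=\Ga_\wV T$ (using $L_{\wU_0}T\cong T$). Replacing $\wV$ by $\wV'\cap\wV$ for an arbitrary Thomason $\wV'\ni\p$ handles the general case. You also do not need Mayer--Vietoris: the same identity shows $\Ga_{\wU_0\cup\wV'}T=\Ga_{\wV'}L_{\wU_0}T=\Ga_{\wV'}T$ directly.

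\textbf{Part (1).} Here the approach breaks down. The Bousfield class $\Ac{T}$ need not be \emph{idempotent}, so it need not lie in $\Ab{\T}$ at all, and you cannot invoke spatiality of $\Ab{\T}$ to find a prime $\pi$ with $\Ac{T}\not\le\pi$. The identity $\Ac{\Ga_\wV T}=\Ac{\Ga_\wV 1}\mt\Ac{T}$ likewise requires both sides to live in the idempotent lattice. The paper avoids this entirely: rather than locating a prime for a given $T$, it verifies the abstract criterion Theorem~\ref{main4}(\ref{main43})(c). Since $\ga_\T$ is complemented and $\Ab{\T}$ is spatial, the induced continuous map $\spc\ga_\T\colon\spc\Ab{\T}\to\hspc\T^c$ pulls Thomason sets back to clopen sets, hence factors through $\lspc\T=\skula{\hspc\T^c}$; applying $\bF$ yields the required $\eta$. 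No reference to any particular object $T$ is made.
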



In Section \ref{Toppre} and Section \ref{TTpre} we discuss preliminary material, including the basics of spectral spaces, pointless topology, the Balmer spectrum, and localisation functors.  In Section \ref{supportstuff} we give the definition of support, discuss its various properties, and prove Theorem \ref{intro2}. In Section \ref{commstuff} we specialise  to the case $\T=\DD{R}$ with $R$ a commutative ring, proving Theorem \ref{intro1}. In Section \ref{framestuff} we relate  support to the Bousfield lattice, and characterise in Theorem \ref{main4} the supportive condition using pointless topology.   In Section \ref{assemblystuff}, we study topological conditions on the Balmer spectrum which imply supportive.   In Section \ref{questionstuff}, we pose several questions.   

We close this section by establishing some conventions.  Triangulated categories and their subcategories will generally be denoted with curly letters such as $\T$ while their objects will be noted with roman capital letters such as $T$.  Rings, modules and complexes will also be denoted in roman capital letters, and their elements in lowercase roman letters.  Topological spaces and their subsets will be denoted with bold capital roman fonts  and their points  with lowercase gothic fonts, e.g.\ $\p\in \wX$.  Continuous functions will be denoted with bold  lowercase letters like $\wf$.  Lattices will be denoted with blackboard bold fonts like $\bX$, and their elements in lower case e.g.\ $\bx\in\bX$.   Greek letters will denote lattice homomorphisms.

\section{Topological preliminaries}\label{Toppre}

\subsection{Point-set topology}\label{Toppre1}

\begin{definition}\label{twiggywiggy}
A space $\wX$ is called \textit{spectral} if 
\begin{enumerate}[(a)]
\item $\wX$ is $\sT_0$
\item every irreducible closed set $\wV\sbe \wX$ has a generic point, i.e.\ there is a $\gp\in\wV$ such that $\wV=\bar{\p}$
\item the quasi-compact open subsets of $\wX$ are a basis
\item the intersection of any two quasi-compact open sets is again quasi-compact open
\item the space $\wX$ is quasi-compact.  
\end{enumerate}
\end{definition}

Spectral spaces were introduced by Hochster, where he gave the following classification.

\begin{theorem}[{\cite{Hochster69}}]

A space $\wX$ is spectral if and only if there exists a commutative ring $R$ such that $\spec R\cong \wX$.  

\end{theorem}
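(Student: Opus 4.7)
The plan is to treat the two implications separately; the forward direction is a routine verification, while the reverse direction is the substantial content of the theorem.

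For the ``if'' direction, I would verify each of the five axioms directly for $\wX=\spec R$. The space is $\sT_0$ since $\overline{\{\p\}}=V(\p)$ distinguishes distinct primes. Irreducible closed subsets of $\spec R$ are precisely the sets $V(\p)$ for $\p\in\spec R$, each with generic point $\p$. The principal opens $D(f)=\{\p : f\notin \p\}$ form a basis; each is homeomorphic to $\spec R_f$ and hence quasi-compact (any cover $D(f)=\bigcup_i D(g_i)$ forces $f\in\sqrt{(g_i)_i}$, so finitely many $g_i$ suffice). Closure under finite intersection follows from $D(f)\cap D(g)=D(fg)$, and $\spec R=D(1)$ is itself quasi-compact.

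For the ``only if'' direction, the plan is to follow Hochster's construction of a ring from a spectral space. First extract the bounded distributive lattice $\bL$ of quasi-compact open subsets of $\wX$; by $\sT_0$ together with the existence of generic points, the points of $\wX$ correspond bijectively to prime filters on $\bL$ via $\p\mapsto \{\wU\in \bL : \p\in \wU\}$, and the specialisation order is recoverable from this correspondence. The task then reduces to realising $\bL$ as the lattice of quasi-compact opens of $\spec R$ for some commutative ring $R$, since this lattice controls both the point set and the topology of the spectrum. To construct such a ring, present $\wX$ as a cofiltered inverse limit of finite $\sT_0$ spaces in the patch (constructible) topology; each finite $\sT_0$ space is the spectrum of a Noetherian ring, for instance a suitable monomial quotient of a polynomial ring engineered so that its prime structure matches the given finite poset. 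Assembling these rings in a compatible filtered colimit produces a ring $R$, and one checks that the induced map $\spec R\to \wX$ is a homeomorphism by verifying bijectivity on points and matching the topologies through their common basis of quasi-compact opens.

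The hard step is of course the second; the principal obstacle is controlling the spectrum of the colimit ring so that no extra primes appear and so that its topology agrees with the prescribed spectral topology. A cleaner modern presentation packages everything as one half of Stone duality for distributive lattices, asserting an anti-equivalence between bounded distributive lattices and spectral spaces with spectral maps; even so, the nontrivial content remains the production of the required ring, which no purely categorical argument can avoid. Since this result is quoted from \cite{Hochster69}, I would ultimately cite that reference for the full details of the ring construction rather than reproduce them.
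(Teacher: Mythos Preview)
The paper does not prove this theorem at all: it is stated with the attribution \cite{Hochster69} and no proof environment follows. Your proposal, which sketches both directions and then defers the hard construction to Hochster's paper, is therefore strictly more than what the paper provides, and your concluding sentence---cite \cite{Hochster69} for the ring construction---is exactly what the paper does.

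Your outline of the argument is accurate as a summary of Hochster's method: the ``if'' direction is the routine verification you describe, and the ``only if'' direction does proceed by exhibiting $\wX$ as an inverse limit (in the patch topology) of finite $\sT_0$ spaces, each of which is realised as the spectrum of an explicit ring, with the desired $R$ obtained as a filtered colimit. The genuinely delicate point, as you note, is controlling the primes of the colimit; Hochster handles this with care, and there is no shortcut. Your remark about Stone duality for bounded distributive lattices is also apt: it explains why the lattice of quasi-compact opens determines the space, but it does not by itself produce the ring.
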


Any spectral apace has another important topology.  

\begin{definition}
Let $\wX$ be a spectral space.  
\begin{enumerate}
\item A subset $\wV\sbe \wX$ is \textit{Thomason} if it is the union of complements of quasi-compact open sets. 
\item The quasi-compact open sets of a spectral space form a closed base, and the topology that they define is called the \textit{Hochster dual} of $\wX$ and denoted $\wX^\da$. The open subsets of $\wX^\da$ are the Thomason subsets of $\wX$.  
\end{enumerate}
\end{definition}

\begin{theorem}[{\cite[Proposition 8]{Hochster69}}]

For any spectral space $\wX$, the Hochster dual $\wX^\da$ is also spectral.  Moreover, there is a natural homeomorphism $\wX\cong{(\wX^{\da})}^\da$.  

\end{theorem}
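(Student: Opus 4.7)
The plan is to introduce an auxiliary topology on $\wX$, namely the \emph{patch topology} $\wX_{\mathrm{pat}}$ whose subbase consists of all quasi-compact opens of $\wX$ together with their complements. Both the original topology on $\wX$ and the candidate topology on $\wX^\da$ are then coarser than the patch topology, so any compactness statement about subsets in the patch topology transfers to $\wX$ and to $\wX^\da$. The key technical input is that $\wX_{\mathrm{pat}}$ is compact Hausdorff. Hausdorffness is immediate from $\sT_0$ combined with the fact that the quasi-compact opens are a basis, while compactness is an application of Alexander's subbase theorem (equivalently, a Zorn-style argument showing that any family of subbasic closed sets with the finite intersection property has nonempty intersection, using axioms (c)--(e) of spectrality).

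Granting patch-compactness, I would verify the five spectral axioms for $\wX^\da$ in turn. $\sT_0$ is inherited: two distinct points $\p\ne\q$ in $\wX$ are separated by some quasi-compact open, whose complement then separates them in $\wX^\da$. For the basis of quasi-compact opens, the subbasic opens of $\wX^\da$ are the complements $\wX\setminus\wU$ of quasi-compact opens $\wU$ of $\wX$; these are closed in the compact Hausdorff patch topology, hence patch-compact, hence compact in the coarser $\wX^\da$. Closure of this basis under finite intersection is immediate from
\[(\wX\setminus\wU_1)\cap(\wX\setminus\wU_2)=\wX\setminus(\wU_1\cup\wU_2),\]
together with the elementary fact that finite unions of quasi-compact opens are quasi-compact open. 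Quasi-compactness of the whole space $\wX^\da$ follows because every open cover of $\wX^\da$ refines to one by subbasic opens, all of which are patch-open. The only nontrivial axiom is the existence of generic points for irreducible closed subsets: an irreducible closed $\wC\sbe\wX^\da$ is in particular patch-closed, and I would argue via Zorn's lemma and patch-compactness that $\wC$ contains a point $\p$ which is minimal for the specialisation order of $\wX^\da$ (equivalently maximal for that of $\wX$), and then exploit irreducibility together with the explicit description of closed sets of $\wX^\da$ as intersections of quasi-compact opens of $\wX$ to force $\wC$ to equal the $\wX^\da$-closure of $\p$.

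For the homeomorphism $\wX\cong(\wX^\da)^\da$, the crux is to identify the quasi-compact opens of $\wX^\da$ explicitly. Every quasi-compact open of $\wX^\da$ is a finite union of basic opens $\wX\setminus\wU_i$, which equals the complement of the finite intersection $\bigcap_i \wU_i$; axiom (d) for $\wX$ ensures this intersection is itself quasi-compact open in $\wX$. Conversely every complement of a quasi-compact open of $\wX$ is basic, hence quasi-compact, in $\wX^\da$. Thus the quasi-compact opens of $\wX^\da$ are precisely the complements of quasi-compact opens of $\wX$, so the opens of $(\wX^\da)^\da$, being unions of such complements-of-complements, are exactly the opens of $\wX$. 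The identity map of underlying sets furnishes the asserted natural homeomorphism.

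The main obstacle is the compactness of the patch topology, which genuinely uses all the spectral axioms and is the one place where a non-formal argument (Alexander subbase or an explicit ultrafilter/Zorn construction) is needed. A secondary subtlety is extracting generic points for irreducible closed subsets of $\wX^\da$: irreducibility in the Hochster dual is phrased in terms of intersections of quasi-compact opens of $\wX$ rather than closures of points, so one has to combine patch-compactness with a careful choice of candidate generic point.
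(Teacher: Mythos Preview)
The paper does not give a proof of this statement; it is quoted as \cite[Proposition 8]{Hochster69} and used as background. So there is no ``paper's own proof'' to compare against.

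That said, your outline is the standard route and is correct in its essentials: establish compactness of the patch (constructible) topology via Alexander's subbase theorem, then read off axioms (a)--(e) for $\wX^\da$ from patch-compactness, and identify the quasi-compact opens of $\wX^\da$ with complements of quasi-compact opens of $\wX$ to obtain $(\wX^\da)^\da\cong\wX$. This is essentially Hochster's original argument. The one place where your sketch is genuinely thin is the sobriety step: having found a point $\p\in\wC$ maximal for the $\wX$-specialisation order does not by itself force $\wC$ to be the $\wX^\da$-closure of $\p$; you still need to use irreducibility to rule out other $\wX^\da$-closed pieces of $\wC$ not containing $\p$. Concretely, if $\q\in\wC$ with $\q\notin\downarrow\p$, choose a quasi-compact open $\wU$ of $\wX$ with $\p\in\wU$ and $\q\notin\wU$; then $\wC=(\wC\cap\wU)\cup(\wC\setminus\wU)$ is a decomposition into proper $\wX^\da$-closed subsets, contradicting irreducibility. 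With that step made explicit, the argument goes through.
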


The points in a spectral space $\wX$ are partially ordered: for $\p,\q\in \wX$ we say that $\p\sbe \q$ if $\q\in \bar{\p}$.  If  $\spec R\cong \wX$ for a commutative ring $R$, this partial order is induced by the inclusion of prime ideals in $R$.  

\begin{example}\label{ZPThomason}

Let $\wX$ be a spectral space and $\p\in \wX$ be a point.  Define
\[\Ze{\p}=\{\q\in \wX\mid \q\nsubseteq \p\}.\]
This is the largest Thomason subset   of $\wX$ not containing $\p$.  Therefore, the closure of $\p$ in the Hochster dual is
\[\downarrow \p=\{\q\in \spec R\mid \q\sbe \p\}.\]

To see that $\Ze{\p}$ is Thomason, assume that $\wX=\spec R$ and $\p\sbe R$ is a prime ideal for a commutative ring $R$.  Then
\[\Ze{\p}=\bigcup_{x\notin \p} \wV(x).\]
The complement of each $\wV(x)$ is quasi-compact,  proving the claim.  
\end{example}

Recall the following definitions from point-set topology.  

\begin{definition}\label{skuladef}

Let $\wX$ be an arbitrary topological space.  
\begin{enumerate}
\item Denoted by $\skula{\wX}$, the \textit{Skula} or \textit{front} topology on $\wX$ is the weakest topology where every open set of $\wX$ is open and closed.  
\item Let $\wf_{\wX}\colon \skula{\wX}\to \wX$ be the set theoretic identity function.  
\item The space $\wX$ satisfies the  separation axiom $\sT_{\frac{1}{2}}$ if for every point $\p\in \wX$ there is an  open set $\wV\sbe \wX$ and a closed set $\wU\sbe \wX$ such that $\{\p\}=\wV\cap\wU$.   

\end{enumerate}
\end{definition}

The Skula topology on $\wX$ is discrete if and only if it is $\sT_{\frac{1}{2}}$; for every point $\p\in \wX$.

\subsection{Pointless topology}\label{prelimframes}

In this section we discuss the basics of pointless topology.  The reader  should refer to \cite{Johnstone82} or  \cite{PicadoPultr12} for further reading.
\begin{definition}\ 
\begin{enumerate}
\item A \textit{complete lattice} is a partially ordered set $\bL$  such that every subset admits both a supremum and an infimum, i.e.\  a join denoted $\jn$ and a meet denoted $\mt$.   In particular every frame has a maximum and minimum denoted by $1_\bL$ and $0_\bL$ respectively, or more often simply 1 and 0.  
\item A \textit{frame} $\bX$ is a complete lattice such that for every $\bx\in \bX$ and $\{\by_i\}\sbe \bF$ satisfies
\[\bx\mt\bigjn_i \by_i=\bigjn_i \bx\mt \by_i.\]
\item A map $\ph\colon\bX\to\bY$ between frames is a \textit{frame homomorphism} if 
	\begin{enumerate}[(a)]
	\item $\ph$ preserves the maximum and minimum, i.e.
	\[\ph(1_\bX)=1_\bY\quad\quad\quad\quad \ph(0_\bX)=0_{\bY}\]
	\item	$\ph$ preserves arbitrary joins, i.e.\ every $\{\bx_i\}\sbe\bX$ satisfies
	\[\ph\left(\bigjn_i\bx_i\right)=\bigjn_i \ph(\bx_i)\]
	\item $\ph$ preserves finite meets, i.e.\ every $\bx,\by\in \bX$ satisfies
	\[\ph(\bx\mt\by)=\ph(\bx)\mt \ph(\by).\]
	\end{enumerate}
\item Let $\FRAME$ be the category of  frames and frame homomorphisms.  
\end{enumerate}
\end{definition}
The following is the critical example  of a frame.
\begin{example}
For a topological space $\wX$, let $\Fr{\wX}$ be the open sets of $\wX$ partially ordered by inclusion.   It is easy to check that $\Fr{\wX}$ is a frame where joins are  unions and finite meets are  intersections.  Given a continuous function $\wg\colon\wX\to \wY$, the induced map 
\[\Fr{\wg}=\wg^{-1}\colon \Fr{\wY}\to \Fr{\wX}\]
is a frame homomorphism. Thus we have defined a contravariant functor 
\[\bF\colon\TOP\to\FRAME.\]  
We say that a frame is \textit{spatial} or \textit{has enough points} if it is isomorphic to a frame  in the image of $\bF$.  
\end{example}

\begin{definition}
Let $\bX$ be a frame.
\begin{enumerate}
\item An element $\bp\in\bX$ is \textit{meet irreducible} or \textit{prime} if for any $\bx,\by\in \bX$ 
\[\bx\mt\by\le \bp\]
implies either $\bx\le \bp$ or $\by\le \bp$.  
\item Let $\spc \bX$ be the set of meet irreducible elements of $\bX$.
\item For any $\bx\in \bX$, let $\wD(\bx)\sbe \spc \bX$ be the meet irreducible $\bp$ such that $\bx\nleq\bp$. 
\end{enumerate}
\end{definition}

\begin{lemma}[{\cite[II.4.1, II.4.3.1]{PicadoPultr12}}]
The set  $\spc \bX$ is a topological space whose open sets are of the form $\wD(\bx)$.  Moreover, we have actually defined a contravariant functor $\spc\colon \FRAME\to\TOP$. 
\end{lemma}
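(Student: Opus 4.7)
The plan is to verify directly that the sets $\wD(\bx)$ for $\bx\in \bX$ constitute a topology on $\spc \bX$, and then to construct the action of $\spc$ on frame homomorphisms via the right adjoint of $\ph$ in the Galois connection of posets. I will implicitly use the standard convention (as in the cited reference) that primes are proper, i.e.\ $\bp\ne 1_\bX$; without this $\spc \bX$ would not equal $\wD(1_\bX)$.

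First I would verify the topology axioms. The empty set is $\wD(0_\bX)$ since $0_\bX\le \bp$ for every $\bp$, and $\spc \bX=\wD(1_\bX)$ by the propriety convention. For arbitrary unions, the equality
\[\wD\left(\bigjn_i \bx_i\right)=\bigcup_i \wD(\bx_i)\]
follows because $\bigjn_i \bx_i\le \bp$ if and only if $\bx_i\le \bp$ for every $i$. For finite intersections, the equality $\wD(\bx\mt\by)=\wD(\bx)\cap \wD(\by)$ uses meet-irreducibility: one inclusion is immediate from $\bx\mt\by\le\bx,\by$, and the other is exactly the contrapositive of the primality condition on $\bp$.

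For functoriality, given a frame homomorphism $\ph\colon \bX\to \bY$, I would define $\spc\ph\colon\spc \bY\to \spc \bX$ by the right adjoint formula
\[\spc\ph(\bq)=\bigjn\{\bx\in \bX\mid \ph(\bx)\le \bq\}.\]
Since $\ph$ preserves joins, $\ph(\spc\ph(\bq))\le \bq$, and one obtains the adjunction identity $\bx\le \spc\ph(\bq)\Leftrightarrow \ph(\bx)\le \bq$. To see $\spc\ph(\bq)\in \spc \bX$, suppose $\bx\mt\by\le \spc\ph(\bq)$. Applying $\ph$ and using that it preserves finite meets gives $\ph(\bx)\mt \ph(\by)\le \bq$, so primality of $\bq$ yields $\ph(\bx)\le \bq$ or $\ph(\by)\le \bq$, hence $\bx\le \spc\ph(\bq)$ or $\by\le \spc\ph(\bq)$. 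Moreover $\spc\ph(\bq)\ne 1_\bX$, since $\ph(1_\bX)=1_\bY\nleq \bq$.

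Continuity and functoriality then fall out of the adjunction. For each $\bx\in \bX$,
\[(\spc\ph)^{-1}(\wD(\bx))=\{\bq\in \spc \bY\mid \bx\nleq \spc\ph(\bq)\}=\{\bq\mid \ph(\bx)\nleq \bq\}=\wD(\ph(\bx)),\]
which is open. The identity $\spc(\id_\bX)=\id_{\spc \bX}$ and the composition rule $\spc(\psi\circ\ph)=\spc(\ph)\circ\spc(\psi)$ are immediate from uniqueness of right adjoints, or equivalently from a direct check using $\bx\le \spc\ph(\bq)\Leftrightarrow \ph(\bx)\le \bq$. There is no real obstacle here; the only subtlety worth flagging is the propriety of primes needed to identify $\spc \bX$ with $\wD(1_\bX)$.
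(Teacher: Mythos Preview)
The paper does not supply its own proof of this lemma; it simply cites \cite[II.4.1, II.4.3.1]{PicadoPultr12} and moves on. Your argument is correct and is essentially the standard one found in that reference: verifying the topology axioms via the identities $\wD(\bigjn_i \bx_i)=\bigcup_i \wD(\bx_i)$ and $\wD(\bx\mt\by)=\wD(\bx)\cap\wD(\by)$, and defining $\spc\ph$ as the right adjoint of $\ph$. Your explicit flag that primes must be taken proper ($\bp\ne 1_\bX$) is apt, since the paper's Definition of meet irreducible does not state this, yet it is needed for $\spc\bX=\wD(1_\bX)$ and for $\spc\ph(\bq)$ to land in $\spc\bX$.
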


A space is \textit{sober} if it is homeomorphic to $\spc \bX$ for a frame $\bX$.  Sober spaces are ubiquitous.  Indeed, a space is sober if and only if it is $\sT_0$ and every irreducible closed set has a generic point, see \cite[II.1.7]{Johnstone82}. Thus every Hausdorff and  spectral space is sober; see  \cite[II.1.6]{Johnstone82} and Definition \ref{twiggywiggy}.  On the other hand, the maximal ideal spectrum of $\CC[x,y]$ is not sober. 

Let $\SPAT$ and $\SOB$ denote the full subcategories of $\FRAME$ and $\TOP$ respectively consisting of spatial frames and sober spaces.  

\begin{theorem}[{\cite[II.1.7 Corollary and II.1.5 and  II.1.6]{Johnstone82}}]\label{framecats}\hfill
\begin{enumerate}
\item The functors $\bF$ and $\spc$ restrict to an equivalence of categories
\[\SOB\cong \SPAT^{\mbox{op}}.\]
\item For every frame $\bX$, there is a natural frame homomorphism
\[\ld_\bX\colon \bX\to \Fr{\spc \bX}.\]
Moreover, $\bX$ is spatial if and only if $\ld_\bX$ is an isomorphism.  
\item For any space $\wX$ there is a natural continuous function
\[\wl_{\wX}\colon \wX\to \spc \Fr{\wX}.\]
Moreover, $\wX$ is sober if and only if $\wl_\wX$ is an isomorphism.
\end{enumerate} 
\end{theorem}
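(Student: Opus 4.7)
The plan is to construct the two candidate natural transformations explicitly, characterize when each is an isomorphism object-by-object, and then deduce the categorical equivalence in (1) from the resulting unit--counit data.

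For (3), I would define $\wl_{\wX}(\gp) := \wX \setminus \overline{\{\gp\}}$, the largest open set avoiding $\gp$. This element is meet-irreducible in $\Fr{\wX}$: since an open $\wU$ meets $\overline{\{\gp\}}$ iff $\gp \in \wU$, one has $\gp \notin \wU\cap \wV$ iff $\gp$ is missing from one of $\wU$ or $\wV$. A short calculation yields $\wl_{\wX}^{-1}(\wD(\wU)) = \wU$, which simultaneously establishes continuity of $\wl_{\wX}$ and that it is open onto its image. Injectivity of $\wl_{\wX}$ is equivalent to $\sT_0$. For surjectivity, given a prime open $\bp$, the complement $\wX \setminus \bp$ is irreducible closed: a decomposition $\wX \setminus \bp = (\wX\setminus\wU) \cup (\wX\setminus \wV)$ yields $\bp = \wU \mt \wV$, and primality forces $\bp = \wU$ or $\bp = \wV$. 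The generic point of this set (when $\wX$ is sober) maps to $\bp$. Hence $\wl_{\wX}$ is a homeomorphism iff $\wX$ is sober.

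For (2), set $\ld_{\bX}(\bx) := \wD(\bx)$. Preservation of finite meets is exactly the primality condition $\bx\mt\by \le \bp \Leftrightarrow \bx \le \bp$ or $\by \le \bp$, while join preservation is immediate from the definition of $\wD$. This map is always surjective onto $\Fr{\spc \bX}$ since every open set is a union $\bigcup_i \wD(\bx_i) = \wD(\bigjn_i \bx_i)$ of basic opens. Therefore $\ld_{\bX}$ is an isomorphism iff it is injective, i.e.\ iff the primes of $\bX$ separate its elements. To match this against the excerpt's definition of spatial: if $\ld_{\bX}$ is iso then $\bX \cong \Fr{\spc \bX}$ sits literally in the image of $\bF$; conversely, if $\bX \cong \Fr{\wY}$, then the already-established identity $\Fr{\wl_{\wY}} \circ \ld_{\Fr{\wY}} = \id$ combined with surjectivity of $\ld_{\Fr{\wY}}$ forces it, and hence $\ld_{\bX}$, to be an isomorphism.

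Finally for (1), the assignments above assemble into a natural bijection
\[\hm{\FRAME}{\bX}{\Fr{\wX}} \cong \hm{\TOP}{\wX}{\spc \bX}\]
sending a frame map $\ph$ to the continuous function $\gp \mapsto \ph^{-1}(\wl_{\wX}(\gp))$; this furnishes a contravariant adjunction whose unit and counit are exactly $\wl$ and $\ld$. Naturality of each is a routine diagram chase, and the triangle identities reduce to the computation $\wl_{\wX}^{-1}(\wD(\wU)) = \wU$ already performed. The main technical obstacle is keeping the two sides of the adjunction straight given that both $\spc$ and $\bF$ are contravariant; once that bookkeeping is settled, standard adjunction theory restricts the pair to an equivalence on the full subcategories of objects on which the unit and counit are invertible, and by (2) and (3) these subcategories are precisely $\SPAT^{\mathrm{op}}$ and $\SOB$.
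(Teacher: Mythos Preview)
Your proposal is correct and is essentially the standard argument one finds in Johnstone's book. Note, however, that the paper does not supply its own proof of this theorem: it is stated with a citation to \cite[II.1.5--II.1.7]{Johnstone82} and used as background, so there is nothing in the paper to compare your argument against.
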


Thus, sober spaces are completely described by their associated frames, and spatial frames completely describe their associated space.

\section{Tensor triangulated preliminaries}\label{TTpre}

%
%
%
%
%
%

\subsection{The Balmer spectrum}

Let $(\T,\Sm,\tns,1)$ be an essentially small tensor triangulated category.  This means that $\T$  is a triangulated category with shift functor $\Sm$ and a closed symmetric monoidal product $\tns$.  Thus we assume the following: there is a tensor unit $1\in \T$; $S\tns T\cong T\tns S$ for all $S,T\in\T$; and that $\tns$ is exact. Lastly, we assume that the isomorphism classes of $\T$  form a set.

\begin{definition}\label{balmerspec}\  
\begin{enumerate}
\item A subcategory  $\cI\sbe \T$ is a thick tensor ideal if 
	\begin{enumerate}[(a)]
	\item $\cI$ is sub triangulated i.e.\ contains 0 and is closed under mapping cones
	\item $\cI$ is closed under direct summands
	\item for every $S\in \cI$ and $T\in \T$, $S\tns T\in\cI$.  
	\end{enumerate}
\item A thick tensor ideal $\cI\sbe \T$ is radical if $T\tns \cdots \tns  T\in \cI$ implies $T\in \cI$.  
\item A thick tensor ideal $\cP\sbe \T$ is prime if $\cI\tns \cJ\sbe \cP$ implies either $\cI\sbe \cP$ or $\cJ\sbe \cP$ for any thick tensor ideals $\cI,\cJ\sbe \cT$.  Equivalently, prime thick tensor ideals are the meet irreducible elements of the lattice of radical thick tensor ideals.  
\item For an object $T\in \T$,  let $\Supp T$ denote the set of primes  which do not contain $T$.  
\item Let $\spc \T$ be the set of prime thick tensor ideals of $\T$.  We consider the weakest topology such that  $\Supp T$ is closed for all $T\in \T$.  This topological space is called the Balmer Spectrum.
\end{enumerate}
\end{definition}

The prototypical example is taken from considering $\T=\perf(R)$ the perfect complexes over a commutative ring $R$.  In this case, $\spc \T$ is homeomorphic to $\spec R$.  The construction above is given by Balmer in \cite{Balmer05}.  

\begin{theorem}\label{class}\ 

\begin{enumerate}
\item\label{taaaa} The Balmer spectrum $\spc \T$ is always spectral.  
\item\label{taaa} The support function $\Supp$ gives a bijection between the radical tensor ideals of $\T$ and the Thomason subsets of $\spc \T$. 
\end{enumerate}

\end{theorem}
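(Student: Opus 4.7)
The plan is to first establish the basic formal properties of $\Supp$, then use them to verify the five spectrality axioms, and finally to leverage the resulting topology to establish the bijection. The approach follows Balmer's original argument.

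To begin, I would verify the elementary identities $\Supp 0 = \emptyset$, $\Supp 1 = \spc \T$, $\Supp(S\op T) = \Supp S \cup \Supp T$, $\Supp(S\tns T) = \Supp S \cap \Supp T$, $\Supp(\Sm T) = \Supp T$, and, for any triangle $S \to T \to U \to$, the containment $\Supp T \sbe \Supp S \cup \Supp U$.  These are immediate from the definition of prime thick tensor ideal.  Writing $\wU(T):=\spc \T\setminus \Supp T$, the tensor identity gives $\wU(T)\cap \wU(S)=\wU(S\op T)$, so the $\wU(T)$ are closed under finite intersections and form a basis for the topology.

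For part (1), the axioms of Definition \ref{twiggywiggy} can now be checked in turn.  The space is $\sT_0$ since distinct primes $\cP\ne\cQ$ differ on some object $T$, separating them via $\wU(T)$.  For axiom (b), if $\wV$ is irreducible closed I would propose $\cP:=\bigcap_{\cQ\in \wV}\cQ$ as a generic point: if $S\tns T\in \cP$ then $\wV$ decomposes as the union of the relatively open subsets $\wV\cap \wU(S)$ and $\wV\cap \wU(T)$ (witnessing primality of each $\cQ\in \wV$), so irreducibility forces one of these to equal $\wV$, giving primality of $\cP$, after which $\bar{\cP}=\wV$ is straightforward.  For axiom (c) and the quasi-compactness in axiom (e), the key compactness statement is: if $\wU(T)=\bigcup_i \wU(T_i)$, then $T$ lies in the radical thick tensor ideal generated by $\{T_i\}$, whence $T$ is built from only finitely many of the $T_i$, yielding a finite subcover.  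Axiom (d) is automatic from the base property.

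For part (2), I would define the two maps $\cJ \mapsto \Supp\cJ := \bigcup_{T\in \cJ}\Supp T$ and $\wV \mapsto \cJ_\wV:=\{T\in \T\mid \Supp T \sbe \wV\}$.  The first is Thomason by construction and the second is verified to be a radical thick tensor ideal; the identity $\Supp \cJ_\wV = \wV$ for Thomason $\wV=\bigcup_i \Supp T_i$ is immediate.  The crux is $\cJ_{\Supp \cJ}=\cJ$ for radical $\cJ$: if $\Supp T \sbe \bigcup_{S\in \cJ}\Supp S$ then $T\in \cJ$.  This reduces to the same core lemma used in part (1), here in the guise of a prime-avoidance statement.

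The hardest step is this core lemma: if $T$ lies outside a given radical thick tensor ideal $\cJ$, then there exists a prime thick tensor ideal $\cP \spe \cJ$ with $T\notin \cP$.  This is proved by Zorn's lemma together with a direct verification that a radical thick tensor ideal maximal among those not containing $T$ is automatically prime.  This single argument underlies both the quasi-compactness of $\wU(T)$ in part (1) and the nontrivial direction of the bijection in part (2).
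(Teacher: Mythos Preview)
The paper does not actually prove this theorem; it simply records that statement~(1) is due to Buan--Krause--\O{}ystein and statement~(2) to Balmer, and cites those references.  Your outline therefore goes well beyond what the paper does, and it is largely in the spirit of Balmer's original argument.

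That said, your sobriety argument contains a genuine error.  The candidate $\cP=\bigcap_{\cQ\in\wV}\cQ$ is the wrong one: in the Balmer spectrum one has $\overline{\{\cP\}}=\{\cQ:\cQ\sbe\cP\}$ (inclusion of subcategories), so the generic point of an irreducible closed $\wV$ is the \emph{maximal} prime contained in it, not the minimal one.  Concretely, for $\T=\perf(\ZZ)$ and $\wV=\spc\T$, your $\cP$ is the zero ideal, which is not prime (witness $\ZZ/2\lns\ZZ/3=0$).  The argument you give for primality also fails: you write that irreducibility of $\wV$ forces one of the two \emph{open} subsets $\wV\cap\wU(S)$, $\wV\cap\wU(T)$ covering $\wV$ to equal $\wV$, but irreducibility only constrains covers by \emph{closed} sets.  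For instance $\spec\ZZ=D(2)\cup D(3)$ with neither equal to the whole space.  The correct candidate is $\cP:=\{T\in\T: \wV\not\sbe\Supp T\}$; closure under cones and primality then follow cleanly from irreducibility applied to the closed decomposition $\wV=(\wV\cap\Supp S)\cup(\wV\cap\Supp S')$ (respectively from $\wV\sbe\Supp S\cap\Supp T$ in the contrapositive), and $\overline{\{\cP\}}=\wV$ is then straightforward.  The rest of your outline---the basic support identities, the Zorn argument producing primes avoiding a given object, and the two mutually inverse maps in part~(2)---is correct and matches the standard approach.
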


Statement (\ref{taaaa}) is in \cite{Buanetal07}.   Statement (\ref{taaa}) is in \cite{Balmer05}.  We can reinterpret this theorem using the topological language of the previous section.

\begin{definition}\label{btt}\hfill 
\begin{enumerate}
\item Let $\TT{\T^c}$ be the set of radical thick tensor ideals partially ordered by inclusion.
\item Let $\hspc \T$ denote the Hochster dual of $\spec \T$.  
\end{enumerate}
\end{definition}

By Theorem \ref{class}, the lattice $\TT{\T^c}$ is isomorphic to the lattice of Thomason subsets, and so we freely confuse the two.  But the latter is the collection of open sets of  $\hspc \T$.  So $\TT{\T^c}\cong\Fr{\hspc \T^c}$ and thus is a spatial frame.  Moreover, since $\hspc \T$ is  sober,  $\hspc\T\cong \spc \TT{\T^c}$.  See \cite{Kock07} or \cite{KockPitsch17} for a more thurough discussion.  


The situation is simplified if $\T$ satisfies a technical condition called \textit{rigid}. Since we will not use the condition itself, and only its consequences we refer the reader to \cite[Definition 1.3]{Stevenson16}.

\begin{lemma}[{\cite[Remark 1.18]{Stevenson16}}]\label{rigid}

If $\T$ is rigid, then every thick tensor ideal is radical. In this case, $\TT{\T^c}$ is the lattice of thick tensor ideals.  

\end{lemma}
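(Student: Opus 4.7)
The plan is to leverage rigidity to exhibit any object $T$ as a direct summand of a suitable tensor combination of $T$ and its dual. Recall that rigidity provides every $T\in \T$ with a dual $T\dual$ together with evaluation and coevaluation morphisms
\[\mathrm{ev}\colon T\dual \tns T\to 1,\qquad \mathrm{coev}\colon 1\to T\tns T\dual\]
satisfying the triangle identity
\[(\id_T \tns \mathrm{ev})\circ(\mathrm{coev}\tns \id_T)=\id_T.\]
In particular, $T$ is a retract of $T\tns T\dual \tns T$ via the composite on the left.

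First I would prove, by induction on $n\ge 1$, that $T$ is a direct summand of $T^{\tns n}\tns (T\dual)^{\tns (n-1)}$. The base case $n=1$ is trivial. For the inductive step, take the splitting from stage $n$ and apply the triangle identity to a single $T$-factor, replacing it by the composite $T\to T\tns T\dual\tns T\to T$; this exhibits $T$ as a direct summand of $T^{\tns (n+1)}\tns (T\dual)^{\tns n}$.

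Next I would use this retract to verify radicality. Suppose $\cI\sbe \T$ is a thick tensor ideal and $T^{\tns n}\in \cI$ for some $n\ge 1$. Since $\cI$ is closed under tensoring by arbitrary objects,
\[T^{\tns n}\tns (T\dual)^{\tns (n-1)}\in \cI.\]
Since $\cI$ is closed under direct summands and $T$ retracts off of this object, $T\in \cI$. Hence every thick tensor ideal is radical, and the second statement is then tautological: the lattice $\TT{\T^c}$ of radical thick tensor ideals defined in Definition \ref{btt} is just the lattice of all thick tensor ideals.

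The only non-trivial content lies in the inductive retract construction, which is standard rigid monoidal categorical yoga. I anticipate no genuine obstacle apart from bookkeeping the factors correctly; the whole argument is powered by a single application of the triangle identity.
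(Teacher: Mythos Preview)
Your argument is correct and is precisely the standard proof of this fact; the paper itself does not give a proof but simply cites \cite[Remark 1.18]{Stevenson16}. The only remark is that the induction is unnecessary: once you know $T$ is a retract of $T\tns T\dual\tns T$, you can argue directly that $T^{\tns n}\in\cI$ implies $T^{\tns(n-1)}\in\cI$ (tensor with $T\dual$, use symmetry, and split off the retract), then iterate down to $T\in\cI$.
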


\subsection{Localisation}

Let $(\T,\Sm,1,\tns)$ be a rigidly compactly generated tensor triangulated category. This means that $\T$ is generated by its compact objects $\T^c$ and that this category is essentially small, tensor closed, and is rigid.  Furthermore, we assume that the unit $1$ is compact.  See \cite[Section 1.1]{BalmerFavi11} for a discussion of these hypotheses.  The following are examples of such categories.
\begin{itemize}
\item The category $\T=D(R)$ for a commutative ring $R$.
\item The stable homotopy category.
\item The stable module category of $kG$ modules for $G$ a group whose order is divisible by the characteristic of $k$.  
\end{itemize}

\begin{definition}\ 
\begin{enumerate}
\item A subcategory $\cL\sbe\T$ is localising if it is thick and closed under arbitrary set-indexed coproducts.  A localising subcategory $\cL$ is a tensor ideal if for every $T\in \cL$ and $S\in \T$, $S\tns T$ is in $\cL$
\item For any Thomason subset $\wV\sbe \spc \T^c$, let $\T_\wV$ be the generated by the compact object $C\in \T^c$ such that
\[\Supp C\sbe \wV.\]
We call $\T_\wV$ the $\wV$-acyclic objects of $\T$.
\item An object $T\in \T$ is $\wV$-local if $\hm{\T}{\T_\wV}{T}=0$ or equivalently \[\hm{\T}{{\T_\wV}^c}{T}=0.\]
\end{enumerate}
\end{definition}

\begin{theorem}\label{idempotenttriangle}
Let $\wV\sbe \spc \T^c$ be a Thomason subset.  There exist triangulated coproducts preserving functors
\[\Ga_{\wV}\colon \T\to \T\quad\quad\quad L_\wV\colon\T\to\T\]
and natural transformations
\[\Ga_{\wV}\xto{\ga^{\wV}} \id\xto{\ld^\wV} L_\wV\xto{\eta^\wV} \Sm \Ga_\wV\]
which have the following properties.
\begin{enumerate}
\item For every $T\in \T$, the triangle
\[\Ga_{\wV} T\xto{\ga^{\wV}_T} T\xto{\ld^\wV_T} L_\wV T\xto{\eta^\wV_T} \Sm \Ga_\wV T\]
is exact.
\item The natural transformations
\[\Ga_{\wV}\ga^\wV\colon \Ga_{\wV}\Ga_{\wV}\to \Ga_{\wV}\quad\quad\quad L_\wV\ld^\wV \colon L_\wV \to L_\wV L_\wV \]
are isomorphisms.
\item The $\wV$-acyclic objects are equal to
\[\T_\wV=\im{\Ga_{\wV}}=\ker L_{\wV}.\]
Moreover, $\Ga_\wV$ is the identity on this category.
\item\label{acyc} The $\wV$-local objects are precisely the categories
\[\ker \Ga_{\wV}=\im{L_{\wV}}.\]
\item\label{tensorapp} The following functors are isomorphic
\[\Ga_{\wV}\cong -\tns \Ga_{\wV} 1\quad\quad\quad L_{\wV}\cong -\tns L_{\wV} 1.\]
\item\label{Intersections} If $\wV'\sbe\spc \T^c$ is another Thomason subset, then the following functors are isomorphic
\[\Ga_{\wV\cap \wV'}\cong \Ga_{\wV}\Ga_{\wV'}  \quad\quad\quad L_{\wV\cup \wV'}\cong L_{\wV}L_{\wV'}. \]
\item\label{main1h} For Thomason subsets $\wV,\wU,\wV',\wU'\sbe \spc\T^c$ such that
\[\wV\cap \wco{\wU}=\wV'\cap \wco{\wU'}\]
there is an isomorphism of functors
\[\Ga_\wV L_{\wU} \cong \Ga_{\wV'}L_{\wU'}. \]
\end{enumerate}
\end{theorem}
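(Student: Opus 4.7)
The plan is to deduce the theorem from Neeman's Bousfield localisation theory for compactly generated triangulated categories, combined with the fact that the resulting localisation is smashing because $\T_\wV$ is a tensor ideal.

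First I would check that $\T_\wV$ is a localising tensor ideal of $\T$ generated by a set of compact objects: by Theorem \ref{class} and Lemma \ref{rigid}, the class $\T_\wV^c := \{C \in \T^c : \Supp C \sbe \wV\}$ is a thick tensor ideal of $\T^c$, and the localising subcategory it generates in $\T$ is automatically closed under tensoring with arbitrary objects of $\T$ (since $\T$ is compactly generated). Neeman's theorem then yields $\Ga_\wV$ as right adjoint to $\T_\wV \hookrightarrow \T$ and $L_\wV$ as left adjoint to $\ker \Ga_\wV \hookrightarrow \T$, together with the localisation triangle of (1); properties (2), (3), (4) are formal consequences of the adjunction formalism (idempotency of reflectors, identification of image and kernel).

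For (5), the crucial input is that $\T_\wV$ is a tensor ideal. Tensoring the localisation triangle of $1$ with any $T$ produces a candidate localisation triangle for $T$ whose first term $T\tns \Ga_\wV 1$ lies in $\T_\wV$. For the third term $T\tns L_\wV 1$ to be $\wV$-local I would use that $\T_\wV^c$ is closed under duals in a rigid monoidal category (since $S^\vee$ is a retract of $S^\vee \tns S \tns S^\vee$), which via rigidity gives $S^\vee \tns L_\wV 1 = 0$ for every $S \in \T_\wV^c$ and hence $\hm{\T}{S}{T \tns L_\wV 1} \cong \hm{\T}{1}{S^\vee \tns T \tns L_\wV 1} = 0$; compact generation of $\T_\wV$ in $\T$ extends this orthogonality from $\T_\wV^c$ to all of $\T_\wV$, and uniqueness of the localisation triangle delivers the isomorphisms.

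For (6), I would identify $\T_{\wV \cap \wV'} = \T_\wV \cap \T_{\wV'}$ via Theorem \ref{class} (intersection of Thomason subsets corresponds to intersection of thick tensor ideals in $\T^c$); then $\Ga_\wV \Ga_{\wV'}$ and $\Ga_{\wV \cap \wV'}$ are both idempotent functors with the same image and must coincide, and the statement for $L$ is dual. Property (7) is the main obstacle. By (5) it reduces to showing that the idempotent object $\Ga_\wV 1 \tns L_\wU 1$ depends only on $\wV \cap \wco{\wU}$. Given the hypothesis, I would pass through the auxiliary pair $(\wV \cap \wV', \wU \cup \wU')$, which by (6) defines the same locally closed set and satisfies $\wV \cap \wV' \sbe \wV,\wV'$ and $\wU \cup \wU' \spe \wU,\wU'$; this reduces to the special case $\wV' \sbe \wV$ and $\wU' \spe \wU$. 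In that situation the natural maps $\Ga_{\wV'} 1 \to \Ga_\wV 1$ and $L_\wU 1 \to L_{\wU'} 1$ produce comparison maps among the idempotents, and the equality of locally closed sets forces these to be isomorphisms via an octahedral argument. The careful verification of this octahedron, essentially the Balmer--Favi ``tt-idempotent'' construction, is the delicate step.
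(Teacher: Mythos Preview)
Your proposal is correct and follows the same route as the paper, which simply cites Miller and Hovey--Palmieri--Strickland for the smashing property, Krause for the Bousfield localisation formalism underlying (1)--(5), and Balmer--Favi for (6) and (7). Your sketch unpacks precisely these references: Neeman's Brown representability for the localisation triangle, rigidity of $\T_\wV^c$ to get smashing, intersection of thick ideals for (6), and the reduction via $(\wV\cap\wV',\,\wU\cup\wU')$ followed by an octahedral comparison for (7), which is exactly the Balmer--Favi argument.
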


\begin{proof}

The category of $\wV$-acyclic objects is smashing; see \cite{Miller92} or \cite[Theorem 3.3.3]{Hoveyetal97}.  Statements (1)-(\ref{tensorapp}) are standard properties of smashing localisation; see \cite[4.9.1,4.10.1,4.11.1]{Krause10} and \cite[Definition 3.3.2]{Hoveyetal97} .  Statements (\ref{Intersections})  and (\ref{main1h}) are \cite[Theorem 5.18]{BalmerFavi11} and \cite[Corollary 7.5]{BalmerFavi11} respectively.
\end{proof}

We end this section with some notation.  

\begin{definition}\label{locypoky}
Recall from Example \ref{ZPThomason} that for any prime $\p\in \spc \T$, the set $\Ze{\p}$ is Thomason. Let $T$ be an object in $\T$.  
\begin{enumerate}
\item Set $T_\p=L_{\Ze{\p}}$.
\item Let $\Supp T$ be the primes $\p\in \spc \T^c$ such that $T_\p\ne 0$.
\end{enumerate}
\end{definition}


\begin{remark}\label{squib}

For every Thomason subset $\wV\sbe \spc\T^c$, the category ${\T_\wV}^c$ is the tensor ideal of $\T^c$ corresponding to $\wV$ in Theorem \ref{class} (\ref{taaa}).  Furthermore, the categories ${\T_{\Ze{\p}}}^c$ are the prime tensor ideals.  These ideals are also the meet irreducible elements of $\TT{\T^c}$.  
\end{remark}

\section{Support}\label{supportstuff}

In this section, $\T$ is a rigidly compactly generated tensor triangulated category.  

\subsection{Defining support}


\begin{definition}\hfill
\begin{enumerate}
\item For a $T\in \T$ and $\p\in \spc \T^c$, let $\p\in \supp T$ if for all Thomason subsets $\wV,\wU\sbe \spc \T^c$,  with $\p\in \wV\cap \wco{\wU}$ 
 \[\Ga_\wV L_{\wU} T\ne0.\]
 \item The localising space $\lspc \T$ will be the set $\spc \T^c$ with the topology generated by $\wV\cap\wco{\wU}$ with $\wV,\wU\sbe \spc \T^c$ Thomason.  In short 
 \[\lspc \T=\skula{\hspc\T^c}.\]
 We will refer to this topology as the localising topology.  
\end{enumerate}
\end{definition}

\begin{theorem}\label{main1}

Suppose $\T$ is a rigidly compactly generated tensor triangulated category.  The following are true.

\begin{enumerate}
\item\label{main1a} For an exact triangle
\[S\to T\to S'\to\]
we have 
\[\supp T\sbe \supp S\cap \supp S'\]
\item\label{main1b} If $T\cong S\op S'$ in $ \T$, then
\[\supp T=\supp S\cup \supp S'\]
\item\label{main1c} For any Thomason subset $\wV\sbe \spc\T^c$ and $T\in \T$, the following hold.
	\begin{enumerate}[(a)]
	\item $\supp \Ga_\wV T=\supp T\cap \wV$
	\item $\supp L_\wV T=\supp T\cap\wco{\wV}$
	\item There is an exact triangle
	\[T'\to T\to T''\to \]
	such that 
	\[\supp T'=\supp T\cap \wV \quad\quad\quad \supp T''=\supp T\cap \wco{\wV}.\]
	\end{enumerate}
\item\label{main1d} For any object $T\in \T$, we have $\supp T\sbe \Supp T$. Equality holds when $T$ is compact.  
\item\label{main1e} $\supp T$ is always closed in $\lspc \T$.
\item\label{main1f} For any localising subcategory $\cL \sbe \T$,  the set
\[\supp \cL=\bigcup_{T\in \cL} \supp T\]
is closed in $\lspc \T$.
\end{enumerate}

\end{theorem}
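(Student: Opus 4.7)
The plan is to exploit Theorem~\ref{idempotenttriangle} throughout: the functors $\Ga_\wV$ and $L_\wU$ are exact and coproduct-preserving (being tensor with idempotents, item~(\ref{tensorapp})); they compose via items~(\ref{Intersections}) and (\ref{main1h}); and their alternations $\Ga_\wV L_\wU$ commute freely. The Thomason subsets of $\spc\T^c$ also form a lattice closed under finite unions and intersections (they are the opens of the spectral space $\hspc\T^c$), so any two basic neighbourhoods $(\wV_1,\wU_1)$ and $(\wV_2,\wU_2)$ of a point $\p$ merge into a third basic neighbourhood $(\wV_1\cap\wV_2,\wU_1\cup\wU_2)$ with
\[\Ga_{\wV_1\cap\wV_2}L_{\wU_1\cup\wU_2}\cong \Ga_{\wV_2}L_{\wU_2}\,\Ga_{\wV_1}L_{\wU_1}.\]
This merging trick drives parts (\ref{main1a})--(\ref{main1c}). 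For the exactness statement (\ref{main1a}), if the two pairs witness $\p\notin\supp S$ and $\p\notin\supp S'$ then the merged functor kills both $S$ and $S'$, hence kills the middle term of the triangle, proving $\supp T\sbe\supp S\cup\supp S'$. For (\ref{main1b}), the additional inclusion follows because $\Ga_\wV L_\wU$ preserves coproducts, so $\Ga_\wV L_\wU(S\op S')=0$ forces both summands to vanish.

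For (\ref{main1c})(a), the containment $\supp\Ga_\wV T\sbe\wV$ uses the witness $(\spc\T^c,\wV)$ together with the identity $L_\wV\Ga_\wV=0$ coming from item~(3) of Theorem~\ref{idempotenttriangle}; the containment $\supp\Ga_\wV T\sbe\supp T$ follows because $\Ga_\wV$ commutes with every $\Ga_{\wV'}L_{\wU'}$, so any witness killing $T$ also kills $\Ga_\wV T$; and the reverse $\supp T\cap\wV\sbe\supp\Ga_\wV T$ rests on
\[\Ga_{\wV'}L_{\wU'}\Ga_\wV T\cong \Ga_{\wV'\cap\wV}L_{\wU'}T,\]
noting that $(\wV'\cap\wV,\wU')$ is still a basic neighbourhood of $\p$ whenever $(\wV',\wU')$ is and $\p\in\wV$. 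Part (b) is the exact mirror using $\Ga_\wV L_\wV=0$ and $L_{\wU'}L_\wV\cong L_{\wU'\cup\wV}$, and (c) is immediate from (a), (b), and the defining triangle of Theorem~\ref{idempotenttriangle}(1).

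For (\ref{main1d}), the general inclusion $\supp T\sbe\Supp T$ is obtained by specialising to the basic neighbourhood $(\spc\T^c,\Ze{\p})$ of $\p$ (available by Example~\ref{ZPThomason}), which yields $L_{\Ze{\p}}T=T_\p\ne 0$. The equality for compact $T$ will be proved by invoking Balmer's classification (Theorem~\ref{class}): the radical thick tensor ideal generated by a compact $T$ is determined by $\Supp T$, and together with the identity $\Ga_\wV L_\wU T\cong T\tns\Ga_\wV L_\wU 1$ this forces $\Ga_\wV L_\wU T=0$ to be equivalent to $\Supp T\cap\wV\cap\wco{\wU}=\emptyset$; contrapositively, $\p\in\Supp T\cap(\wV\cap\wco{\wU})$ implies nonvanishing.

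Item (\ref{main1e}) is then formal: by definition $\wco{\supp T}$ is the union of those $\wV\cap\wco{\wU}$ with $\Ga_\wV L_\wU T=0$, hence open in $\lspc\T$. For (\ref{main1f}) I reduce to (\ref{main1e}) by collapsing $\cL$ to a single object: writing $\cL=\loc(\{G_i\}_{i\in I})$ for a set of generators and letting $G=\bigoplus_{i\in I}G_i\in\cL$, the equality $\supp\cL=\supp G$ is immediate once one observes that $\ker(\Ga_\wV L_\wU)$ is itself a localising subcategory, so $\Ga_\wV L_\wU G=0$ forces every $G_i$ and hence all of $\cL=\loc(\{G_i\})$ into this kernel. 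The main expected obstacle is the compact case of (\ref{main1d}): passing from Balmer's compact classification of supports to the nonvanishing statement for $\Ga_\wV L_\wU T$ requires a careful analysis of the noncompact idempotent $\Ga_\wV L_\wU 1$. A secondary subtlety in (\ref{main1f}) is the implicit appeal to set-generation of $\cL$; if this is not a priori available, one must argue directly that $\wco{\supp\cL}$ is covered by basic opens on which the whole $\cL$ lies in a common kernel.
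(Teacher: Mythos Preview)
Your outline for parts (\ref{main1a}), (\ref{main1b}), (\ref{main1c}), and (\ref{main1e}) is correct and matches the paper's approach; your treatment of (\ref{main1c}) is a direct verification rather than the paper's use of the idempotent triangle together with (\ref{main1a}), but both are fine.

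There are two genuine gaps. First, in (\ref{main1d}) you correctly reduce to showing that for compact $T$, $\p\in\wV$, and $\Ga_\wV T_\p=0$ one must have $T_\p=0$, but you do not supply the argument --- you flag it as ``the main expected obstacle'' and leave it. The paper's proof is a short prime-ideal argument in $\T^c$: let $\cI=\thick^\tns(T)$, which is radical by rigidity (Lemma~\ref{rigid}); since any $S\in\T_\wV^c\cap\cI$ satisfies $S_\p=\Ga_\wV S_\p\in\thick^\tns(\Ga_\wV T_\p)=0$, one gets $\T_\wV^c\cap\cI\sbe\T_{\Ze{\p}}^c$; but $\T_\wV^c\nsubseteq\T_{\Ze{\p}}^c$ because $\p\in\wV$, and $\T_{\Ze{\p}}^c$ is prime, so $\cI\sbe\T_{\Ze{\p}}^c$, i.e.\ $T_\p=0$. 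Your proposed route through the equivalence $\Ga_\wV L_\wU T=0\iff\Supp T\cap\wV\cap\wco{\wU}=\emptyset$ ultimately needs the same step for the forward implication.

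Second, your argument for (\ref{main1f}) assumes $\cL$ is generated by a \emph{set} of objects, which is not known for arbitrary localising subcategories; your fallback (``argue directly that $\wco{\supp\cL}$ is covered by basic opens on which all of $\cL$ lies in a common kernel'') is exactly the nontrivial point, since $\p\notin\supp\cL$ only gives a separate witness for each $T\in\cL$. The paper avoids this entirely: for each $\p\in\supp\cL$ choose $T^\p\in\cL$ with $\p\in\supp T^\p$, set $T=\coprod_{\p\in\supp\cL}T^\p$ (a set-indexed coproduct, since $\supp\cL\sbe\spc\T^c$), and verify $\supp T=\supp\cL$; closedness then follows from (\ref{main1e}).
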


We devote the rest of this section  to proving this theorem.  

\begin{lemma}\label{subset}

 Consider Thomason subsets $\wV,\wU,\wV',\wU'\sbe \spc\T^c$ such that
\[\wV'\sbe \wV\quad\quad\quad  \wco{\wU'}\sbe \wco{\wU}.\]
If $\Ga_\wV L_{\wU} T=0$ then $\Ga_{\wV'}L_{\wU'}T=0.$  In particular $\p\in \supp T$ if and only if for all Thomason subsets $\wV\sbe \spc \T^c$ with $\p\in \wV$
\[\Ga_\wV T_\p\ne 0.\]

\end{lemma}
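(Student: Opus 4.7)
My plan for Lemma \ref{subset} is to derive the first (monotonicity) assertion directly from parts (\ref{tensorapp}) and (\ref{Intersections}) of Theorem \ref{idempotenttriangle}, and then to extract the ``In particular'' reformulation of the support definition from this monotonicity. The hypotheses $\wV' \sbe \wV$ and $\wco{\wU'} \sbe \wco{\wU}$ translate to $\wV \cap \wV' = \wV'$ and $\wU \cup \wU' = \wU'$, respectively. Part (\ref{Intersections}) then supplies $\Ga_{\wV'} \cong \Ga_\wV \Ga_{\wV'}$ and $L_{\wU'} \cong L_\wU L_{\wU'}$, while part (\ref{tensorapp}) realises every $\Ga$- and $L$-functor as tensoring with a fixed object, so all four functors in play commute pairwise. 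Assembling these facts will give
\[\Ga_{\wV'} L_{\wU'} T \;\cong\; \Ga_{\wV'} L_{\wU'} \bigl(\Ga_\wV L_\wU T\bigr),\]
which vanishes whenever $\Ga_\wV L_\wU T = 0$.

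For the ``In particular'' claim I would deduce both directions from the first part. In the forward direction, given $\p \in \supp T$ and a Thomason $\wV \ni \p$, I would take $\wU = \Ze{\p}$; since $\p \notin \Ze{\p}$ by construction, $\p \in \wV \cap \wco{\Ze{\p}}$, and the definition of support yields $\Ga_\wV T_\p = \Ga_\wV L_{\Ze{\p}} T \ne 0$. For the converse, suppose $\Ga_\wV T_\p \ne 0$ for every Thomason $\wV$ containing $\p$, and let $\wV', \wU'$ be Thomason subsets with $\p \in \wV' \cap \wco{\wU'}$. The crucial observation is $\wU' \sbe \Ze{\p}$: if some $\q \in \wU'$ satisfied $\q \sbe \p$ then $\p \in \bar{\q} \sbe \wU'$ (Thomason sets are unions of closed sets, hence closed under specialisation), contradicting $\p \notin \wU'$. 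Then applying the monotonicity just proven with $(\wV, \wU, \wV', \wU')$ in the lemma replaced by $(\wV', \Ze{\p}, \wV', \wU')$ shows that vanishing of $\Ga_{\wV'} L_{\wU'} T$ would force vanishing of $\Ga_{\wV'} L_{\Ze{\p}} T = \Ga_{\wV'} T_\p$; contrapositively, $\Ga_{\wV'} L_{\wU'} T \ne 0$, as required.

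The only point requiring care is bookkeeping: the hypothesis $\wco{\wU'} \sbe \wco{\wU}$ flips under complementation to $\wU \sbe \wU'$, which is precisely the direction needed to factor $L_{\wU'}$ through $L_\wU$, and the ``closed under specialisation'' property for Thomason sets must be tracked in the correct direction when showing $\wU' \sbe \Ze{\p}$. Once these reversals and the commutativity of smashing localisations from (\ref{tensorapp}) are handled cleanly, the argument is mechanical.
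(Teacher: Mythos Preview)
Your approach is essentially the paper's: the monotonicity statement is proved exactly as the author does, via Theorem \ref{idempotenttriangle} (\ref{Intersections}) (and (\ref{tensorapp}) for commutativity), and the paper in fact leaves the ``In particular'' clause unargued, so your explicit derivation of it is a welcome addition.

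One bookkeeping slip to fix, of exactly the sort you flagged in your last paragraph: in the converse direction you want the implication $\Ga_{\wV'} L_{\wU'} T = 0 \Rightarrow \Ga_{\wV'} L_{\Ze{\p}} T = 0$, so the substitution into the lemma should be $(\wV,\wU,\wV',\wU') \mapsto (\wV',\wU',\wV',\Ze{\p})$, not $(\wV',\Ze{\p},\wV',\wU')$ as you wrote. With your substitution the required hypothesis would be $\Ze{\p}\sbe \wU'$ rather than the $\wU'\sbe\Ze{\p}$ you correctly established, and the implication would run the wrong way. Your stated conclusion and the key inclusion $\wU'\sbe\Ze{\p}$ are both right; only the tuple is transposed.
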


\begin{proof}

By Theorem \ref{idempotenttriangle} (\ref{Intersections}), the hypotheses imply that $\Ga_{\wV'}=\Ga_{\wV'}\Ga_\wV$ and ${L_{\wU'}=L_{\wU'}L_\wU}$.  Therefore, we have
\[\Ga_{\wV'}L_{\wU'}T=\Ga_{\wV'}L_{\wU'}\Ga_\wV L_{\wU} T=0.\]
\end{proof}

\begin{proof}[{Proof of Theorem \ref{main1} (\ref{main1a}) and (\ref{main1b})}]

Consider an exact triangle 
\[S\to T\to S'\to \]
and suppose that $\p\notin \supp S$ and $\p\notin \supp S'$.  Then there exists Thomason subsets $\wV,\wU,\wX,\wW$ such that 
\[\p\in \wV\cap \wco{\wU}\quad\quad\quad\p\in \wX\cap\wco{\wW}\]
\[\Ga_\wV L_{\wU} S=0 \quad\quad\quad \Ga_\wX L_{\wW} S'=0.\]
By the previous Lemma, we know that 
\[\Ga_{\wV\cap \wW} L_{\wco{(\wU\cup\wX)}} S=\Ga_{\wV\cap \wW} L_{\wco{(\wU\cup\wX)}} S'=0\]
and therefore $\Ga_{\wV\cap \wW} L_{\wco{(\wU\cup\wX)}} T=0$.
Furthermore, 
\[\p\in \wV \cap \wco{\wU} \cap \wW\cap\wco{\wX}=\wV\cap \wW\cap \wco{(\wU\cup\wX)}.\]
Therefore, $\p\notin \supp T$, proving Theorem \ref{main1} (\ref{main1a}).

Now we assume that $T\cong S\op S'$.  To prove Theorem \ref{main1} (\ref{main1b}),  observe that $\Ga_\wV L_{\wU} T=0$ if and only if $\Ga_\wV L_{\wU} S=0$ and $\Ga_\wV L_{\wU} S'=0$  for any Thomason subsets $\wV,\wU\sbe \spc\T^c$.
\end{proof}

\begin{proof}[{Proof of Theorem \ref{main1} (\ref{main1c})}]

Let $\wV \sbe \spc\T^c$ be a Thomason subset and let $T\in \T$.  The vanishing objects
\[\Ga_{\spc\T^c}L_{\wV}\left(\Ga_{\wV} T\right)=0\quad\quad\quad \Ga_{\wV}L_\emptyset \left(L_{\wV} T\right)= \Ga_{\wV}L_{\wV} T=0\]
 implies that $\supp \Ga_{\wV} T\sbe \wV$ and $\supp L_{\wV} T\sbe \wco{\wV}$.

The idempotent triangle 
\[\Ga_{\wV} T\to T\to \ L_{\wV} T\to.\]
and Theorem \ref{main1} (\ref{main1a}) imply the containment
\[\supp \Ga_{\wV} T\sbe \supp T\cup \supp L_{\wV} T\sbe  \supp T\cup\wco{\wV}.\]
Intersecting both sides with $\wV$ yields
\[\supp \Ga_{\wV} T=\supp T\cap \wV\]
 proving Theorem \ref{main1} (\ref{main1c}a). A similar calculation proves  Theorem \ref{main1} (\ref{main1c}b).
Setting $T'=\Ga_{\wV} T$ and $T''=L_{\wV} T$ proves Theorem \ref{main1} (\ref{main1c}c).
\end{proof}

\begin{proof}[{Proof of Theorem \ref{main1} (\ref{main1d})}]

Let $T\in \T$.  If $\p\notin \Supp T$, then 
\[T_\p=\Ga_{\spc\T^c} L_{\Ze{\p}} T=0\] 
and so $\p\notin \supp \T$, which implies the containment $\supp T\sbe \Supp T$.

  Now suppose $T$ is compact.    To show the reverse containment, suppose that $\p\notin \supp \T$.  Then there exists a Thomason subset $\wV\sbe \spc\T^c$  such that $\p\in \wV$ and  $\Ga_{\wV} T_\p= 0$.  Let $\cI$ be the thick tensor ideal of $\T^c$ generated by $T$.  Since $\T^c$ is rigid, $\cI$ is automatically radical by Lemma \ref{rigid}.  Since every  $S\in \T^c_{\wV}\cap\cI$ satisfies 
  \[{S}_\p\cong\Ga_{\wV} S_\p=0\]
we have 
  \[{\T_{\wV}}^c\cap \cI\sbe{\T_{\Ze{\p}}}^c.\]
Since $\p\in \wV$, we know that $\wV\nsubseteq \Ze{\p}$ and so ${\T_{\wV}}^c\nsubseteq{\T_{\Ze{\p}}}^c$  by Theorem \ref{class} and Remark \ref{squib}. Since ${\T_{\Ze{\p}}}^c$ is a prime tensor ideal, we conclude that $\cI\sbe {\T_{\Ze{\p}}}^c$.  Thus $T_\p=L_{\Ze{\p}} T=0$.  
\end{proof}

\begin{proof}[{Proof of Theorem \ref{main1} (\ref{main1e})}]

Suppose $\p\notin\supp T$ for some $T\in \T$.  Then there exists Thomason subsets $\wV,\wU\sbe\spc\T^c$ such that $ \Ga_{\wV}L_{\wU} T=0$ and  $\p\in \wV\cap\wco{\wU}$. Now consider any $\q\in\wV\cap \wco{\wU}$. Then $\q$ is also not in $\supp T$.  Therefore, there is an open neighbourhood of $\p$ in $\lspc \T$ which is not in $\supp T$.  It follows that $\supp T$ is closed.  
\end{proof}

\begin{proof}[{Proof of Theorem \ref{main1} (\ref{main1f})}]

Let $\cL\sbe \T$ be a localising subcategory.  For every $\p\in \supp \cL$, choose some element $T^\p$ such that $\p\in \supp T^\p$.   Set
\[T=\coprod_{\p\in \supp \cL} T^\p\]
We claim that 
\[\supp \cL=\supp T.\]
Given the claim, the result follows from Theorem \ref{main1} (\ref{main1e}). First, $\supp T\sbe \supp \cL$ since $T\in \cL$.  For any $\q\in \supp\cL$ and any Thomason closed subsets $\wV,\wU\sbe\spc\T^c$ with $\p\in \wV\cap\wco{\wU}$, we have
\[\Ga_{\wV}L_{\wU} T=\Ga_{\wV}L_{\wU} \coprod_{\p\in \supp \cL} T^\p\cong\Ga_{\wV}L_{\wU} T^\q\sqcup \coprod_{\substack{\p\in \supp \cL\\ \p\ne \q}} \Ga_{\wV}L_{\wU} T^\p\ne0.\]
Thus $\p\in \supp T$. 
\end{proof}

\subsection{Visible points}\label{vis}

In this section, we relate our support to that of Balmer, Favi, and Stevenson in \cite{BalmerFavi11,Stevenson13b}.  Following \cite{Stevenson13b},  a point $\p$ in a spectral space  $\wX$ is {\it visible} if there exists Thomason subsets $\wV,\wU\sbe\wX$ such that
\[\{\p\}=\wV\cap\wco{\wU}.\]
This definition is more general than \cite[Definition 7.9]{BalmerFavi11}.

\begin{lemma}\label{Noeth}

The following are equivalent for a spectral space $\wX$.  
\begin{enumerate}
\item All points of $\wX$ are visible.
\item the localising topology on $\wX$ is discrete.
\item The Hochster dual $\wX^\da$ is $\sT_{\frac{1}{2}}$
\end{enumerate}
Moreover, these conditions hold when $\wX$ is Noetherian.
\end{lemma}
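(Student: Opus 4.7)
The strategy is to unpack each equivalence directly from the definitions, then handle the Noetherian case by an explicit construction of visibility data.

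For (1) $\Leftrightarrow$ (2), I would first observe that the sets of the form $\wV \cap \wco{\wU}$ with $\wV, \wU$ Thomason form a basis for the localising topology: they are closed under finite intersections since $(\wV \cap \wco{\wU}) \cap (\wV' \cap \wco{\wU'}) = (\wV \cap \wV') \cap \wco{\wU \cup \wU'}$, and Thomason subsets are closed under finite intersections and finite unions (they are the open sets of $\wX^\da$). A topology with a given basis is discrete precisely when each singleton $\{\p\}$ is itself a basis element, since any open containing $\p$ must contain a basis neighbourhood of $\p$, which would then have to equal $\{\p\}$. This is exactly the condition that $\p$ is visible.

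For (2) $\Leftrightarrow$ (3), I would simply note the definitional identity $\lspc \wX = \skula{\wX^\da}$, and then invoke the fact (recorded immediately after Definition \ref{skuladef}) that for any space $\wY$, the Skula topology on $\wY$ is discrete if and only if $\wY$ is $\sT_{\frac{1}{2}}$. Applying this with $\wY = \wX^\da$ gives the equivalence.

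For the Noetherian case, I would show directly that every point is visible by writing $\{\p\} = \bar{\p} \cap \wco{\Ze{\p}}$. The set $\Ze{\p}$ is Thomason by Example \ref{ZPThomason}, and $\wco{\Ze{\p}} = \downarrow\!\p$. In a Noetherian spectral space every open set is quasi-compact, so $\wX \setminus \bar{\p}$ is quasi-compact open and hence $\bar{\p}$ is Thomason. The set-theoretic identity $\{\p\} = \bar{\p} \cap \downarrow\!\p$ reduces to the antisymmetry of the specialisation order, which holds because $\wX$ is $\sT_0$. The main (minor) obstacle is simply keeping the definitions of Thomason subset, localising topology, and visibility aligned; once this bookkeeping is in place, each step is formal.
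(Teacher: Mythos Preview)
Your proposal is correct and is essentially the unpacking the paper has in mind when it declares the equivalence ``straightforward''; for the Noetherian clause the paper simply cites \cite[Proposition 7.13]{BalmerFavi11}, whereas you give the explicit argument $\{\p\}=\bar{\p}\cap\wco{\Ze{\p}}$, which is exactly the standard one. One small simplification: (1)$\Leftrightarrow$(3) is immediate without passing through (2), since the open sets of $\wX^\da$ are precisely the Thomason subsets of $\wX$, so the visibility equation $\{\p\}=\wV\cap\wco{\wU}$ is literally the $\sT_{\frac{1}{2}}$ condition in $\wX^\da$.
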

See Definition \ref{skuladef} to recall the oft forgotten seperation axiom $\sT_{\frac{1}{2}}$.
\begin{proof}

The equivalence  is straightforward. See \cite[Proposition 7.13]{BalmerFavi11} for the last statement.  
\end{proof}

Let $\T$ be a rigidly compactly generated tensor triangulated category.   When  a prime  $\p\in \spc\T^c$  is visible and write $\{\p\}=\wV\cap  \wco{\wU}$ for Thomason subsets $\wV,\wU$.   Define the {\it Rickard idempotent}
\[\Ga_\p 1=\Ga_\wV1\tns L_\wU 1.\]
By \cite[Corollary 7.5]{BalmerFavi11}, this definition is independent of the choice of $\wV$ and $\wU$.  Rickard idempotents have appeared in  \cite{Rickard97,BensonIyengarKrause08} and other works.  When every prime is visible, then $\p\in \supp T$ if and only if 
\[\Ga_\p 1\tns T\ne 0.\]
Thus  our definition of support recovers \cite[Definition 7.16]{BalmerFavi11} and \cite[Definition 5.6]{Stevenson13b}. Furthermore, Theorem \ref{main1} is a generalisation of \cite[Theorem 7.17, Proposition 7.18]{BalmerFavi11}.


For  $R$ a commutative Noetherian ring, $\T=\DD{R}$ is a compactly generated tensor triangulated category.  In this case, $\spc \DD{R}^c=\spec R$ by the Hopkins Neeman theorem \cite{Hopkins87,Neeman92}.  Every point is visible in $\spec R$  and so our support coincides with that of Balmer and Favi.  Moreover by the work of Foxby and Iyengar in \cite{Foxby79} and \cite[2.1 and 4.1]{FoxbyIyengar03}, $\p\in \supp M$ for $M\in \DD{R}$ if and only if $M\lns k(\p)\ne 0$ where $k(\p)$ is the residue field at $\p$.

\subsection{Detecting vanishing}\label{sup}

\begin{definition}

We call $\T$ supportive if $T=0$ if and only if $\supp T=\emptyset$.

\end{definition}

\begin{example}

By \cite[Lemma 2.6]{Foxby79} and \cite{Neeman92}, $\DD{R}$ is supportive for all Noetherian commutative rings $R$.  

\end{example}

\begin{theorem}\label{main2}

Let $\T$ be a rigidly compactly generated tensor triangulated category.  If $\T$ is supportive, the following are true.

\begin{enumerate}
\item\label{main2c}For any Thomason subset $\wV\sbe \spc\T^c$ and $T\in \T$,  the following statements hold.
	\begin{enumerate}[(a)]
	\item $T$ is $\wV$-acyclic if and only if $\supp T\sbe \wV$
	\item $T$ is $\wV$-local if and only if $\supp T\sbe\wco{\wV}$
	\item $\T_\wV=\{T\in \T\mid \Supp T\sbe \wV\}$
	\end{enumerate}
\item\label{main2b} If there exists a Thomason subset $\wV\sbe \spc\T^c$ such that $\supp S\sbe \wV$ and $\wV\cap \supp T=\emptyset$, then $\hm{\T}{S}{T}=0$.  
\item\label{main1c'} For any Thomason subsets $\wV,\wU\sbe\spc\T^c$ and $T\in \T$, then 
\[\wV\cap\wco{\wU}\cap \supp T=\emptyset\]
if and only if $\Ga_\wV L_{\wU} T=0$.
\item\label{main2g'} For any $\cX\sbe \T$ let $\loc^\tns \cX$ be the localising tensor ideal closure of $\cX$.  Then
\[\supp \loc^\tns \cX=\overline{\supp \cX}\]
where the closure is taken in the localising topology.  In particular, for any set $\{T_\al\}\sbe \T$, we have
\[\supp \coprod T_\al =\overline{\bigcup \supp T_\al}\]

\item\label{main1g} For any close set $\wX\sbe \lspc\T$, the category
\[\{T\in \T\mid \supp T\sbe \wX\}\]
is localising.

\end{enumerate}

\end{theorem}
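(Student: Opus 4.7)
The plan is to use the supportive hypothesis (namely $X=0 \Leftrightarrow \supp X = \emptyset$) as a dictionary between categorical vanishing and set-theoretic conditions on $\supp$, combined repeatedly with the formulas $\supp \Ga_\wV T = \supp T \cap \wV$ and $\supp L_\wV T = \supp T \cap \wco{\wV}$ from Theorem \ref{main1}(\ref{main1c}). With this in hand, (\ref{main2c})(a) is immediate: $T \in \T_\wV$ iff $L_\wV T = 0$ iff $\supp L_\wV T = \emptyset$ iff $\supp T \cap \wco{\wV} = \emptyset$; part (b) is dual. Part (\ref{main2b}) follows at once since the hypotheses make $S$ $\wV$-acyclic and $T$ $\wV$-local. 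For (\ref{main1c'}), iterating Theorem \ref{main1}(\ref{main1c}) gives $\supp \Ga_\wV L_\wU T = \supp T \cap \wV \cap \wco{\wU}$, and supportive converts the vanishing of this set into $\Ga_\wV L_\wU T = 0$.

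The main technical work is (\ref{main1g}). Set $\cC_\wX = \{T \mid \supp T \sbe \wX\}$ for a closed $\wX \sbe \lspc \T$. Closure under triangles and summands follows directly from Theorem \ref{main1}(\ref{main1a}, \ref{main1b}), so the substantive issue is arbitrary coproducts. Given $\{T_\al\} \sbe \cC_\wX$ and $\p \notin \wX$, choose Thomason $\wV, \wU$ with $\p \in \wV \cap \wco{\wU}$ and $(\wV \cap \wco{\wU}) \cap \wX = \emptyset$, which is possible since such basic opens generate $\lspc \T$ by definition. Part (\ref{main1c'}) yields $\Ga_\wV L_\wU T_\al = 0$ for each $\al$; since $\Ga_\wV$ and $L_\wU$ preserve coproducts, $\Ga_\wV L_\wU \coprod T_\al = 0$, and a second application of (\ref{main1c'}) gives $\p \notin \supp \coprod T_\al$. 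The identical argument, combined with the tensor factorisations $\Ga_\wV \cong - \tns \Ga_\wV 1$ and $L_\wU \cong - \tns L_\wU 1$ from Theorem \ref{idempotenttriangle}(\ref{tensorapp}), shows $\cC_\wX$ is in fact a localising tensor ideal; this strengthened form is what is needed for (\ref{main2g'}).

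Part (\ref{main2c})(c) then follows by combining (\ref{main2c})(a) with $\supp T \sbe \Supp T$ (Theorem \ref{main1}(\ref{main1d})) and the up-closedness of Thomason sets: given $\supp T \sbe \wV$ and $\p \in \Supp T$, supportive applied to $T_\p$ produces some $\q \in \supp T_\p = \supp T \cap \downarrow \p$, whence $\q \in \wV$ and $\q \sbe \p$ force $\p \in \wV$ by up-closedness. For (\ref{main2g'}), the tensor-ideal version of (\ref{main1g}) with $\wX = \overline{\supp \cX}$ shows $\loc^\tns \cX \sbe \cC_{\overline{\supp \cX}}$, hence $\supp \loc^\tns \cX \sbe \overline{\supp \cX}$; the reverse containment combines $\supp \cX \sbe \supp \loc^\tns \cX$ with the closedness assertion of Theorem \ref{main1}(\ref{main1f}). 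The coproduct corollary specialises to $\cX = \{T_\al\}$, using that each $T_\al$ is a retract of $\coprod T_\be$ so that $\supp T_\al \sbe \supp \coprod T_\be$. The principal obstacle is the arbitrary-coproduct step in (\ref{main1g}); it is the reason for working in the localising topology $\lspc \T = \skula{\hspc \T^c}$, since coproducts are controlled only by the finer basic opens $\wV \cap \wco{\wU}$.
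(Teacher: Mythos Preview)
Your proposal is correct and follows essentially the same strategy as the paper: parts (\ref{main2c})(a,b), (\ref{main2b}), and (\ref{main1c'}) are argued identically. The only notable differences are organizational. You prove (\ref{main1g}) first by a direct coproduct argument (using that $\Ga_\wV L_\wU$ preserves coproducts) and then deduce (\ref{main2g'}) from it; the paper instead proves (\ref{main2g'}) first, using that $\ker \Ga_\wV L_\wU$ is a localising tensor ideal, and then obtains (\ref{main1g}) from the coproduct formula in (\ref{main2g'}). These are the same argument unwound in opposite directions. For (\ref{main2c})(c), your pointwise argument via specialisation-closedness of Thomason sets (pick $\q \in \supp T_\p = \supp T \cap \downarrow\p$ and push up to $\p$) is a genuine alternative to the paper's route, which instead observes that $\{T \mid \Supp T \sbe \wV\}$ is localising and contains ${\T_\wV}^c$, hence contains $\T_\wV$. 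Your argument is slightly more self-contained since it avoids checking that big $\Supp$ behaves well under coproducts; the paper's argument avoids invoking supportive a second time. Both are short and correct.
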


\begin{proof}[{Proof of Theorem \ref{main2} (\ref{main2c})}]

We prove  Theorem \ref{main2} (\ref{main2c}a).  By by Theorem \ref{main1} (\ref{main1c}a), if $T$ is $\wV$-acyclic then $\supp T\sbe \wV$.  We now prove the converse.  If $\supp T\sbe \wV$, then by Theorem \ref{main1} (\ref{main1c}b),
\[\supp L_{\wV}T=\supp T\cap \wco{\wV}=\emptyset.\]
If $\T$ is supportive, then $L_{\wV} T=0$ and so $T$ is $\wV$-acyclic.  A similar argument shows Theorem \ref{main2} (\ref{main2c}b).

We now prove  Theorem \ref{main2} (\ref{main1c}c).  Set 
\[\cL=\{T\in \T\mid \Supp T\sbe \wV\}.\]
 By Theorem \ref{main2} (\ref{main2c}a), $\T_\wV=\{T\in\T\mid \supp T\sbe \wV\}$.  By Theorem \ref{main1} (\ref{main1d}), $\supp T\sbe \Supp T$ and so $\cL\sbe \T_\wV$.   By definition, ${\T_\wV}^c\sbe \cL$.  Since $\cL$ is localising, it follows that $\T_\wV\sbe \cL$.  
\end{proof}

\begin{proof}[{Proof of Theorem \ref{main2} (\ref{main2b})}]

If there exists a Thomason subset $\wV\sbe \spc\T^c$ such that $\supp S\sbe \wV$ and $\wV\cap \supp T=\emptyset$, then $S$ is $\wV$-acyclic and $T$ is $\wV$-local Theorem by \ref{main2} (\ref{main2c}).  It follows that $\hm{\T}{S}{T}=0$.  
\end{proof}

\begin{proof}[{Proof of Theorem \ref{main2} (\ref{main1c'})}]

This follows by applying Theorem \ref{main1} (\ref{main1c}a) and (\ref{main1c}b) and the supportive condition.
\end{proof}

\begin{proof}[{Proof of Theorem \ref{main2} (\ref{main2g'})}]

Let $\cX\sbe\T$ be a collection of objects.  It is clear that $\supp \cX\sbe \supp \loc \cX$.  By Theorem \ref{main1} (\ref{main1f}), we know that $\supp \loc^\tns \cX$ is closed, and so
\[\overline{\supp \cX}\sbe \supp \loc^\tns \cX.\]
Now take a prime $\p\notin \overline{\supp \cX}$.  There exist Thomason subsets $\wV,\wU\sbe \spc\T^c$ such that  $\p\in \wV\cap\wco{\wU}$ and $ \wV\cap\wco{\wU}$ is disjoint from $\supp\cX$. From Theorem \ref{main2} (\ref{main1c'}),  
\[\Ga_\wV L_{\wU} \cX=0.\] 
 Since the kernel of $\Ga_\wV L_{\wU}$ is a tensor ideal,  $\Ga_\wV L_{\wU} \loc^\tns \cX=0$.  Therefore $\p\notin \supp \loc^\tns\cX$.  
\end{proof}

\begin{proof}[{Proof of Theorem \ref{main2} (\ref{main1g})}]

Let $\wX\sbe\spc\T^c$ be closed in the localising topology.  Set
\[\cL=\{T\in \T\mid \supp \T\sbe \wX\}.\]
Theorem \ref{main1} (\ref{main1a}) shows that $\cL$ is thick.  For a set $\{T_\al\}\sbe \cL$ Theorem \ref{main2} (\ref{main2g'}) implies that
\[\supp \coprod T_\al=\overline{\bigcup \supp T_\al}\sbe \wX\]
since $\wX$ is closed.  It is easy to check that $\cL$ is a tensor ideal.  
\end{proof}

 \subsection{Characterization of support}
 
 It is not clear if $\T$ is always supportive.  However, the following result tells us that if $\supp$ does not detect vanishing, then there is no other reasonable support function taking values in $\spc\T^c$ that will.

\begin{theorem}\label{main3}
Consider the following properties of a function 
\[\ws\colon:\T\to \{\mbox{Subsets of } \spc \T^c\}.\]
Let $T\in \T$.
\begin{enumerate}
\item\label{Vanishing} Vanishing: $\ws(T)=\empty$ if and only if $T=0$. 
\item\label{Local} Local: If $\wV\sbe\spc \T^c$ is Thomason and $T$ is $\wV$-local, then $\ws(T)\cap \wV=\emptyset$.
\item\label{Big Support} Big Support: $\ws(T)\sbe \Supp T$ for any compact object $T\in \T^c$.
\item\label{Orthogonality} Orthogonality: For any $S\in \T$, if there is a Thomason subset $\wV\sbe\spc \T^c$  such that $\ws(T)\sbe \wV$ and $\ws(S)\cap \wV=\emptyset$, then 
\[\hm{\T}{T}{S}=0.\]
\item\label{Exactness} Exactness: For any exact triangle $S\to T\to S'\to$ in $\T$,
\[\ws(T)\sbe \ws(S)\cup\ws(S').\]
\item\label{Separation} Separation: For any Thomason subset $\wV\sbe\spc \T^c$, there is an exact triangle
\[T'\to T\to T''\to\]
with $\ws(T')\sbe \wV$ and $\ws(T'')\cap\wV=\emptyset$.
\end{enumerate}
The function $\ws$ satisfies all of these properties, if and only if $\ws=\supp$ and $\T$ is supportive.  
\end{theorem}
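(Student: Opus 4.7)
The backward direction is immediate: if $\T$ is supportive and $\ws = \supp$, then Theorems \ref{main1} and \ref{main2} verify that $\supp$, and hence $\ws$, satisfies all six properties.

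For the forward direction, suppose $\ws$ satisfies all six. I first establish the localisation formulas $\ws(\Ga_\wV T) = \ws(T) \cap \wV$ and $\ws(L_\wV T) = \ws(T) \cap \wco{\wV}$ for any Thomason $\wV \sbe \spc \T^c$. Apply Separation to obtain an exact triangle $T' \to T \to T''$ with $\ws(T') \sbe \wV$ and $\ws(T'') \cap \wV = \emptyset$. Orthogonality against $L_\wV T'$ (which is $\wV$-local and so satisfies $\ws(L_\wV T') \cap \wV = \emptyset$ by Local) forces $\hm{\T}{T'}{L_\wV T'} = 0$; running the long exact sequence of $\hm{\T}{-}{L_\wV T'}$ against $\Ga_\wV T' \to T' \to L_\wV T'$ and using acyclic--local orthogonality for the flanking terms then forces $\hm{\T}{L_\wV T'}{L_\wV T'} = 0$, so $L_\wV T' = 0$ and $T'$ is $\wV$-acyclic. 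For the dual claim, Big Support supplies enough compact $\wV$-acyclic test objects $C$ (with $\ws(C) \sbe \Supp C \sbe \wV$) so that Orthogonality yields $\hm{\T}{C}{T''} = 0$ for every such $C$, showing $T''$ is $\wV$-local. By uniqueness of the acyclic--local decomposition, $T' \cong \Ga_\wV T$ and $T'' \cong L_\wV T$; Exactness applied to the triangle and its rotation yields the formulas, and iterating gives $\ws(\Ga_\wV L_\wU T) = \ws(T) \cap \wV \cap \wco{\wU}$ for all Thomason $\wV, \wU$. Combined with Vanishing, this produces the key equivalence $\Ga_\wV L_\wU T = 0 \iff \ws(T) \cap \wV \cap \wco{\wU} = \emptyset$.

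Two consequences now follow. If $\p \notin \supp T$, the definition together with Lemma \ref{subset} supplies Thomason $\wV, \wU$ with $\p \in \wV \cap \wco{\wU}$ and $\Ga_\wV L_\wU T = 0$, so the equivalence gives $\p \notin \ws(T)$; this proves $\ws(T) \sbe \supp T$. Combined with Vanishing, $T \ne 0$ forces $\ws(T) \ne \emptyset$ and hence $\supp T \supseteq \ws(T) \ne \emptyset$, establishing that $\T$ is supportive.

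The main obstacle is the reverse containment $\supp T \sbe \ws(T)$. By the key equivalence, $\ws(T)$ and $\supp T$ meet the same basic localising-open sets $\wV \cap \wco{\wU}$ nontrivially, so $\supp T$ is exactly the closure of $\ws(T)$ in the localising topology $\lspc \T$. Since $\supp T$ is already $\lspc$-closed by Theorem \ref{main1}(\ref{main1e}), the proof reduces to showing $\ws(T)$ itself is $\lspc$-closed: for every $\p \notin \ws(T)$, one must exhibit Thomason $\wV, \wU$ with $\p \in \wV \cap \wco{\wU}$ and $\ws(T) \cap \wV \cap \wco{\wU} = \emptyset$. My plan is to apply Separation to $T$ with the Thomason subset $\Ze{\p}$, passing to the $\Ze{\p}$-local stalk $T_\p \cong L_{\Ze{\p}} T$ (whose $\ws$-support equals $\ws(T) \cap \downarrow \p$ and in particular excludes $\p$), and then to exploit Orthogonality paired with Big Support to test $T_\p$ against compact objects supported strictly away from $\p$, producing the required Thomason witnesses or equivalently forcing $T_\p = 0$. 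With closedness of $\ws(T)$ established, $\ws(T) = \overline{\ws(T)}^{\lspc} = \supp T$, completing the proof.
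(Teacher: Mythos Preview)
Your argument tracks the paper's proof very closely through the identification $T'\cong\Ga_\wV T$, $T''\cong L_\wV T$, the localisation formulas, and the key equivalence $\Ga_\wV L_\wU T=0\iff \ws(T)\cap\wV\cap\wco{\wU}=\emptyset$. From this you correctly extract $\ws(T)\sbe\supp T$ and supportiveness of $\T$. Your proof that $T'$ is $\wV$-acyclic (forcing $L_\wV T'=0$ directly via the long exact sequence) is a minor variation on the paper's, which instead shows $\hm{\T}{T'}{X}=0$ for all $\wV$-local $X$ and invokes the characterisation of acyclics as the left orthogonal of locals; both are fine.

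The genuine gap is exactly where you locate it, and your plan does not close it. If $\p\notin\ws(T)$ but some $\q\subsetneq\p$ lies in $\ws(T)$, then $\ws(T_\p)=\ws(T)\cap\downarrow\p$ still contains $\q$, so Vanishing gives $T_\p\ne 0$; testing $T_\p$ against compacts via Big Support and Orthogonality only tells you which Thomason sets $\ws(T_\p)$ meets, and cannot separate $\p$ from $\ws(T)$ by a basic $\lspc$-open unless $\ws(T)$ is already $\lspc$-closed---which is precisely what you are trying to prove. Nothing in axioms (1)--(6) forces closedness: they are all phrased in terms of Thomason sets and therefore, as your own closure computation $\supp T=\overline{\ws(T)}^{\lspc}$ shows, pin down $\ws(T)$ only up to its $\lspc$-closure.

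It is worth noting that the paper's own proof passes over this same point: it asserts the biconditional ``$\p\notin\ws(T)$ if and only if there exist Thomason $\wV,\wU$ with $\p\in\wV\cap\wco{\wU}$ and $\ws(\Ga_\wV L_\wU T)=\emptyset$'' and deduces $\ws=\supp$, but the forward implication of that biconditional is exactly the closedness of $\ws(T)$ and is not justified. When every point of $\spc\T^c$ is visible (as in the Noetherian setting of Benson--Iyengar--Krause, whose argument the paper is adapting) one may take $\wV\cap\wco{\wU}=\{\p\}$ and the issue evaporates; in the generality claimed here it does not, and neither your plan nor the paper's proof supplies the missing step.
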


The proof is similar to that of \cite[Theorem 5.15]{BensonIyengarKrause08}.

\begin{proof}
If $\T$ is supportive, then $\supp$ satisfies these properties by Theorem \ref{main1} and Theorem \ref{main2}.  Conversely,  suppose $\ws$ satisfies all of these properties. We need to show that $\ws=\supp$.  

Let $T\in \T$, and consider the exact triangle
\[T'\to T\to T''\to\]
guaranteed by (\ref{Separation}). By (\ref{Exactness}) and (\ref{Separation}), $\ws(T')=\ws(T)\cap \wV$ and $\ws(T'')=\ws(T)\cap\wco{\wV}$.  

We claim that 
\[T'\cong \Ga_{\wV} T \quad\quad \mbox{and} \quad\quad T''\cong L_{\wV} T\]
for any Thomason subset $\wV\sbe\spc \T^c$.  Given the claim,  $\p\notin \ws(T)$ if and only if there are Thomason subsets $\wV,\wU\sbe \spc\T^c$ with $\p\in \wV\cap\wco{\wU}$ such that 
\[\ws\left(\Ga_{\wV}L_{\wU} T\right)=\emptyset.\]
Therefore, the claim and (\ref{Vanishing}) imply that $\ws(T)=\supp T$.  

We now prove the claim.  Let $X\in \T$ be a $\wV$-local object and let $Y\in {\T_\wV}^c$. Then by (\ref{Local}) and (\ref{Big Support})
\[\ws(X)\cap \wV=\emptyset\quad\quad\mbox{and}\quad\quad \ws(Y)\sbe \Supp Y\sbe \wV.\]
Then (\ref{Orthogonality}) and (\ref{Separation})  imply 
\[\hm{\T}{T'}{X}=0\quad\quad\mbox{and}\quad\quad \hm{\T}{Y}{T''}=0.\]
We conclude that $T'$ is $\wV$-acyclic and $T''$ is $\wV$-local by \cite[Proposition 4.10.1]{Krause10} and the definition of $\wV$-local.  

By \cite[Proposition 4.11.2]{Krause10}, we have the following commutative diagram.
\[\xymatrix{
T' \ar[r] \ar[d]^{f} & T \ar[r] \ar@{=}[d]  & T'' \ar[r] \ar[d]^{g} &\Sm T' \ar[d]^{\Sm f}\\
\Ga_{\wV} T \ar[r] & T \ar[r] & L_\wV  T \ar[r] & \Sm \Ga_{\wV} T\\
}\]
By the octahedral axiom, the cones
\[\cone g\cong \Sm \cone f\]
are isomorphic and thus both are $\wV$-acyclic and $\wV$-local.  Hence the cones are zero, and so $f$ and $g$ are isomorphisms. 
\end{proof}

\section{Commutative rings}\label{commstuff}

\subsection{Support}

 In this section, we specialise our theory of supports to the derived category  $\DD{R}$  for a commutative ring $R$.  
 For $\underline{x}=x_1,\dots,x_n\in R$, we write
 \[\KK{\infty}{\underline{x}}=(R\to R_{x_1})\tns \cdots \tns(R\to R_{x_n})=\]
\[0\to R\to \bigoplus_{1\le i\le n} R_{x_i}\to \bigoplus_{1\le i<j\le n} R_{x_ix_j} \to \cdots \to \bigoplus R_{x_1\cdots x_n}\to 0.\]

%
%
%
%
%

\begin{lemma}\label{zappy}\ 
\begin{enumerate}
\item\label{zappy0} The category $\DD{R}$ is a  rigidly compactly generated tensor triangulated category whose tensor product is $\lns$.
\item\label{zappy1} The compact objects of $\DD{R}$ are the { \it perfect complexes}, i.e.\ complexes which are quasi-isomorphic to bounded complexes of finitely generated free modules.  We will denote the perfect complexes by $\perf(R)$. 
\item\label{zappy2}  We have $\spc \perf(R)=\spec R$.  
\item\label{zappy3} The closed sets of $\spec R$ with quasi-compact complement are those of the form $V(\underline{x})$ with $\underline{x}=x_1,\dots,x_n\in R$.  Thus, every Thomason set is a union of such sets.  Moreover, 
\[\Ga_{\wV(\underline{x})} R=\KK{\infty}{\underline{x}}.\]
When $R$ is Noetherian this is $\wR\Ga_{\underline{x}} R$, the right derived torsion functor applied to $R$.  
\item\label{zappy4}  For any $\p\in \spec R$ and $M\in \DD{R}$, 
\[L_{\Ze{\p}} M=M_\p.\]
Hence the notation in Definition \ref{locypoky} is unambiguous.  
\end{enumerate}
\end{lemma}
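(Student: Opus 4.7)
My plan is to dispatch the structural claims (\ref{zappy0})--(\ref{zappy2}) largely by citation: $\DD{R}$ is compactly generated by its tensor unit $R$, the derived tensor product $\lns$ is rigid on $\perf(R)$, the compact objects are exactly the perfect complexes by a classical result, and the homeomorphism $\spc \perf(R) \cong \spec R$ is Balmer's theorem (building on Thomason and Hopkins--Neeman). These facts are then used freely in the remaining parts.

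For (\ref{zappy3}), the quasi-compact opens of $\spec R$ are precisely finite unions of basic opens $D(x)$, i.e.\ sets of the form $D(\underline{x}) = \spec R \setminus \wV(\underline{x})$; so closed sets with quasi-compact complement are exactly the $\wV(\underline{x})$, and Thomason subsets are unions of these. The key identification $\Ga_{\wV(\underline{x})} R = \KK{\infty}{\underline{x}}$ I handle first for $n=1$: since $L_{\wV(x)}$-local objects are those on which $x$ acts invertibly, $L_{\wV(x)} R = R_x$, and the idempotent triangle forces $\Ga_{\wV(x)} R$ to be the fibre of the localisation $R \to R_x$. A direct check with the short exact sequence $0 \to R_x[-1] \to (R \to R_x) \to R \to 0$ coming from the canonical filtration of the two-term complex identifies this fibre with $(R \to R_x)$ placed in cohomological degrees $0$ and $1$. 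For general $\underline{x}$, the equality $\wV(\underline{x}) = \bigcap_i \wV(x_i)$ together with Theorem \ref{idempotenttriangle}(\ref{Intersections}) and (\ref{tensorapp}) give
\[\Ga_{\wV(\underline{x})} R \cong \Ga_{\wV(x_1)} R \tns \cdots \tns \Ga_{\wV(x_n)} R = \KK{\infty}{\underline{x}}.\]
The identification with $\wR\Ga_{\underline{x}} R$ in the Noetherian case is the classical computation of local cohomology by the stable Koszul complex.

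For (\ref{zappy4}), Example \ref{ZPThomason} presents $\Ze{\p}$ as the directed union $\bigcup_{x \notin \p} \wV(x)$. Iterating Theorem \ref{idempotenttriangle}(\ref{Intersections}) handles finite subsets: $L_{\wV(x_1) \cup \cdots \cup \wV(x_m)} R = L_{\wV(x_1)} \cdots L_{\wV(x_m)} R = R_{x_1 \cdots x_m}$. Passing to the filtered colimit over finite subsets of $R \setminus \p$ yields $L_{\Ze{\p}} R = R_\p$, and applying (\ref{tensorapp}) gives $L_{\Ze{\p}} M = M \lns R_\p = M_\p$ for arbitrary $M$.

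The main technical obstacle I expect is pinning down the cohomological conventions (fibre versus cone, placement of shifts) so that the iterated tensor product $(R \to R_{x_1}) \tns \cdots \tns (R \to R_{x_n})$ really does assemble into the explicit extended \v{C}ech complex $\KK{\infty}{\underline{x}}$ displayed in the statement. A secondary delicate point is justifying the passage to the filtered colimit in (\ref{zappy4}), which requires either a direct inspection of the construction of $L_{\Ze{\p}}$ or the observation that smashing localisations commute with filtered colimits in a compactly generated setting.
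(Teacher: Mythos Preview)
Your treatment of (\ref{zappy0})--(\ref{zappy3}) is correct and in fact more explicit than the paper, which dispatches (\ref{zappy3}) entirely by citation to Greenlees; your reduction to $n=1$ via Theorem~\ref{idempotenttriangle}(\ref{Intersections}) is the standard argument underlying that citation.

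For (\ref{zappy4}) your approach and the paper's diverge slightly. You compute $L_{\wV(x_1)\cup\cdots\cup\wV(x_m)} R = R_{x_1\cdots x_m}$ and then pass to the filtered colimit, which forces you to justify that $L_{\bigcup_i \wV_i} R \cong \operatorname{colim}_i L_{\wV_i} R$ for the directed system in question. Your suggested justification---that smashing localisations commute with filtered colimits---is the wrong statement: that says each $L_\wV$ preserves colimits of \emph{objects}, whereas what you need is continuity of $\wV \mapsto L_\wV R$ in the Thomason variable. This does hold here (every compact supported on $\Ze{\p}$ is already supported on some $\wV(x)$ with $x\notin\p$, so the colimit of locals is local), but it needs that argument.

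The paper sidesteps this by working from the other end: it observes directly that $R_\p$ is $\Ze{\p}$-local (since any $\Ze{\p}$-acyclic $M$ satisfies $M_\p=0$), and that the fibre of $R\to R_\p$ is a filtered colimit of the $\Ga_{\wV(x)} R$ with $x\notin\p$, hence lies in the localising subcategory $\T_{\Ze{\p}}$. This uses only closure of acyclics under colimits, which is immediate, rather than closure of locals, which is what costs you the extra step. Both routes are valid; the paper's is marginally cleaner because the colimit happens on the acyclic side where no compactness argument is needed.
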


\begin{proof}

 For (\ref{zappy1}), See \cite[Example 1.10,1.13]{Neeman96}.  Statement (\ref{zappy2}) is the Hopkins, Neeman, Thomason theorem \cite{Neeman92}, \cite{Hopkins87}, and \cite{Thomason97}.  For (\ref{zappy3}), see \cite[Lemma 5.8]{Greenlees01}, although the argument stems from the Noetherian case treated in \cite{Hartshorne67}. 
 
 We show (\ref{zappy4}). Any $M\in {\DD{R}}_{\bZ(\p)}$ satisfies  $M_\p=0$, and thus $L_{\Ze{\p}} R_\p\cong R_\p$.  Furthermore, the complex 
\[0\to R\to R_\p\to 0\]
is a direct limit of complexes 
\[\Ga_{\wV(x)} R=0\to R\to R_x\to 0\]
with $x\notin \p$.  Since $L_{\Ze{\p}}\Ga_{\wV(x)} =0$, the first complex is in the kernel of $L_{\Ze{\p}}$.  It follows that $L_{\Ze{\p}} R\cong L_{\Ze{\p}} R_\p$.  
\end{proof}

We can now restate our definition of support.  In fact, for the reader whose interest only lies in commutative algebra, the following can be taken as a definition.  
For a complex $M\in\DD{R}$ and a sequence $\underline{x}\in R$,  set
\[\Kinf{\underline{x}}{M}=\KK{\infty}{\underline{x}}\tns M\quad\quad\mbox{and}\quad\quad\HKinf{i}{\underline{x}}{M}=\HH{i}{}{\Kinf{\underline{x}}{M}}.\]

%
%

\begin{lemma}\label{ringsup}

For a complex $M\in \DD{R}$, a prime $\p\in \supp M$ if and only if for every finite sequence $\underline{x}=x_1,\dots,x_n\in \p$
\[\HKinf{i}{\underline{x}}{ M_\p}\ne 0\]
for some $i$.  
\end{lemma}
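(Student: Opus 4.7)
The plan is to reduce everything to Lemma \ref{subset}, which reformulates membership in $\supp M$ as the non-vanishing of $\Ga_\wV M_\p$ for every Thomason $\wV$ containing $\p$, and then to identify $\Ga_{\wV(\underline{x})} M_\p$ explicitly with the Koszul--\v{C}ech complex $\Kinf{\underline{x}}{M_\p}$. The key computation is that by Theorem \ref{idempotenttriangle} (\ref{tensorapp}) and Lemma \ref{zappy} (\ref{zappy3}),
\[\Ga_{\wV(\underline{x})} M_\p \cong \Ga_{\wV(\underline{x})} R \lns M_\p \cong \KK{\infty}{\underline{x}}\lns M_\p = \Kinf{\underline{x}}{M_\p},\]
so its cohomology groups are exactly the $\HKinf{i}{\underline{x}}{M_\p}$. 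Non-vanishing of some $\HKinf{i}{\underline{x}}{M_\p}$ is thus equivalent to non-vanishing of $\Ga_{\wV(\underline{x})} M_\p$ in $\DD{R}$.

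For the forward direction, suppose $\p\in \supp M$. Given any finite sequence $\underline{x}=x_1,\dots,x_n\in \p$, the closed set $\wV(\underline{x})$ is a Thomason subset containing $\p$ (since $\underline{x}\sbe \p$), so by Lemma \ref{subset}, $\Ga_{\wV(\underline{x})} M_\p\ne 0$, and the displayed identification gives some $i$ with $\HKinf{i}{\underline{x}}{M_\p}\ne 0$.

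Conversely, suppose the Koszul--\v{C}ech condition holds for every finite $\underline{x}\sbe \p$, and let $\wV\sbe\spec R$ be a Thomason subset with $\p\in\wV$. By Lemma \ref{zappy} (\ref{zappy3}), $\wV=\bigcup_\al \wV(\underline{x}_\al)$ for finite sequences $\underline{x}_\al$, and since $\p\in \wV$ there is some index $\al$ with $\p\in\wV(\underline{x}_\al)$, i.e.\ $\underline{x}_\al\sbe \p$. By Theorem \ref{idempotenttriangle} (\ref{Intersections}) applied to $\wV(\underline{x}_\al)\sbe \wV$, we have $\Ga_{\wV(\underline{x}_\al)} \cong \Ga_{\wV(\underline{x}_\al)}\Ga_\wV$. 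If $\Ga_\wV M_\p=0$, this would force $\Ga_{\wV(\underline{x}_\al)} M_\p=0$, hence $\HKinf{i}{\underline{x}_\al}{M_\p}=0$ for all $i$, contradicting the hypothesis. Therefore $\Ga_\wV M_\p\ne 0$ for every Thomason $\wV\ni \p$, and Lemma \ref{subset} yields $\p\in\supp M$.

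There is no real obstacle here; the argument is a two-line translation once the idempotent functor $\Ga_\wV$ is identified with the Koszul--\v{C}ech complex and once Lemma \ref{subset} is invoked. The only minor point requiring care is that Thomason sets are in general infinite unions of $\wV(\underline{x})$, handled by the standard trick of pulling a single generator $\wV(\underline{x}_\al)\ni\p$ out of the union and applying the multiplicativity of $\Ga_-$ from Theorem \ref{idempotenttriangle} (\ref{Intersections}).
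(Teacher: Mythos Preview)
Your proof is correct and follows the same approach as the paper, which simply cites Lemma \ref{subset} and Lemma \ref{zappy} (\ref{zappy3}); you have merely unpacked the argument, making explicit the identification $\Ga_{\wV(\underline{x})} M_\p\cong \Kinf{\underline{x}}{M_\p}$ and the reduction of an arbitrary Thomason $\wV\ni\p$ to a basic $\wV(\underline{x}_\al)$ via the monotonicity in Lemma \ref{subset}.
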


\begin{proof}
%
This result follows directly from  Lemma \ref{subset} and Lemma \ref{zappy} (\ref{zappy3}). 
\end{proof}

\subsection{Detecting vanishing}

We now discuss the supportive condition in $\DD{R}$.  
\begin{definition}

For a module $M\in \Md(R)$,  a prime $\p\in\spec R$  is weakly associated to a module $M$ if there exists an element $m\in M$ such that $\p$ is minimal amongst primes containing $\ann(m)$.  Let $\wass_R M$ denote the the set of weakly associated primes of $M$.  Let $\mid\wass_R M$ be the minimal weakly associated primes.  

\end{definition}

The following result is an exercise in  \cite[IV,1, Exercise 17]{Bourbaki89}.  See \cite{Merker69} for proofs.  

\begin{lemma}\label{weakass}
Let $M$ be an $R$-module and $\p\in \spec R$ a prime.  
\begin{enumerate}
\item\label{weakassa} $\p\in \wass_R M$ if and only if $\p R_\p\in \wass_{R_\p} M_\p$
\item\label{weakassb} $M=0$ if and only if $\wass_R M=\emptyset$
\item\label{weakassc} When $R$ is Noetherian, then $\wass M=\ass M$
\item\label{weakassd} If $W\sbe R$ then 
\[\wass \Ga_W M=\{\p\in \wass M\mid \p\cap W\ne \emptyset\}\]
where $\Ga_W M$ is the $W$-torsion submodule of $M$. 
\item If $W\sbe R$ is multiplicatively closed, then 
\[\wass\label{weakasse} M_W=\{\p\in \wass M\mid \p\cap W= \emptyset\}.\]
\end{enumerate}
\end{lemma}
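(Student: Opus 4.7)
The plan is to verify each clause directly from the definition, relying on the single unifying fact that for any proper ideal $I \sbe R$ the nonempty family of primes containing $I$ admits a minimal element by Zorn's lemma (since a descending intersection of primes is prime). This is the replacement for primary decomposition in the non-Noetherian setting, and is the ingredient that makes weakly associated primes well-behaved where ordinary associated primes would fail.

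For (\ref{weakassa}), I would use the classical bijection $\q \leftrightarrow \q R_\p$ between primes of $R$ contained in $\p$ and primes of $R_\p$. For a fixed $m \in M$, this bijection identifies the primes of $R_\p$ containing $\ann_{R_\p}(m/1)$ with the primes of $R$ contained in $\p$ that contain $\ann_R(m)$; minimality on one side thus transfers to the other. For (\ref{weakassb}), if $M \ne 0$ pick $m \ne 0$, so $\ann(m) \ne R$, and apply the Zorn argument to extract a minimal prime over $\ann(m)$; the converse is immediate. For (\ref{weakassc}), in the Noetherian case any minimal prime over $\ann(m)$ is associated to the cyclic submodule $Rm \cong R/\ann(m) \sbe M$, since minimal primes of a proper ideal are associated primes of the quotient; conversely any $\p = \ann(m')$ is trivially minimal over itself.

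For (\ref{weakassd}) and (\ref{weakasse}), I would first record the concrete descriptions $\Ga_W M = \{m \in M : \ann(m)\cap W \ne \emptyset\}$ and, for multiplicative $W$, $m/1 = 0$ in $M_W$ iff $\ann(m) \cap W \ne \emptyset$. Given $\p \in \wass M$ realized by $\p$ minimal over $\ann(m)$, if $\p \cap W \ne \emptyset$ then $m \in \Ga_W M$ and the minimality of $\p$ over $\ann(m)$ is inherited in the submodule, giving the inclusion $\supseteq$ of (\ref{weakassd}); if $\p \cap W = \emptyset$ then $m/1 \ne 0$ in $M_W$ and (\ref{weakassa}) applied to $M_W$ over $R_W$ gives $\p R_W \in \wass_{R_W} M_W$, hence the inclusion $\supseteq$ of (\ref{weakasse}). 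The reverse inclusions follow by taking any witness $m \in \Ga_W M$ (resp.\ $m/s \in M_W$) realizing a minimal prime and checking that the same $m$, viewed in $M$, realizes that prime as a weakly associated prime, using the torsion/localization description of the annihilator.

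The main obstacle is (\ref{weakassd}) and (\ref{weakasse}): one must verify that minimal primes extracted on the torsion or localized side truly correspond to weakly associated primes of $M$ itself, not merely of the subquotient. The care needed is in tracking annihilators across the torsion-submodule inclusion and the localization map, but once the Zorn principle and the bijection of (\ref{weakassa}) are in hand, no further machinery is required; the full proofs are those in \cite{Merker69}.
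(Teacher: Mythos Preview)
The paper does not actually prove this lemma: it states that the result is an exercise in Bourbaki and refers to \cite{Merker69} for proofs, exactly as you do at the end of your proposal. So your approach is aligned with the paper's, and in fact your sketch is considerably more detailed than what the paper provides.

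That said, there is one incorrect step in your sketch for (\ref{weakassd}). You write that if $\p$ is minimal over $\ann(m)$ and $\p \cap W \ne \emptyset$, then $m \in \Ga_W M$. This is false: take $R = k[x,y]/(xy)$, $M=R$, $m=1$, $\p=(x)$, $W=\{x\}$; then $\p$ is minimal over $\ann(1)=0$ and $x \in \p \cap W$, yet $1$ is not $x$-torsion. The fix is to replace the witness. Since $\p$ is the unique prime of $R_\p$ containing $\ann_{R_\p}(m/1)$, one has $\p R_\p = \sqrt{\ann_{R_\p}(m/1)}$, so for $w \in \p \cap W$ there exist $n \ge 1$ and $s \notin \p$ with $s w^n m = 0$. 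Setting $m' = sm$, one checks that $m' \ne 0$, that $\p$ is still minimal over $\ann(m')$, and that $w^n \in \ann(m')$; hence $m' \in \Ga_W M$ witnesses $\p \in \wass \Ga_W M$. With this correction your outline goes through, and the remaining details are indeed in the reference you cite.
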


\begin{theorem}\label{main6}\hfill
\begin{enumerate}
\item\label{main6a} Let $M\in \DD{R}$ be a complex of $R$-modules such that $\HH{i< n}{}{M}=0$, e.g.\ $M$ is a module.  Then 
\[ \wass_R \HH{n}{}{M}\sbe \supp M.\]
In particular, $M=0$ if and only if $\supp M=\emptyset$.  
\item\label{main6b} For any complex $M\in \DD{R}$
\[ \min\wass_R \bigoplus_{i\in \ZZ} \HH{i}{}{M}\sbe \supp M.\]
\item\label{main6c} If  the prime ideals of $R$ satisfy DCC, then $\DD{R}$ is supportive, i.e.\ \break$\supp M=\emptyset$ if and only if $M=0$.  
\end{enumerate}
\end{theorem}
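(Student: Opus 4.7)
The plan is to establish (a) and (b) via Lemma \ref{ringsup} — which reduces $\p \in \supp M$ to the concrete nonvanishing statement that $\HKinf{i}{\underline{x}}{M_\p} \ne 0$ for some $i$, for every finite sequence $\underline{x} \subseteq \p$ — and then to deduce (c) immediately from (b) together with the DCC hypothesis.

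For (a), the idea is to reduce to a single cohomology module in one degree using the canonical truncation triangle $H^n(M)[-n] \to M \to \tau^{\ge n+1} M \to$. Since $\KK{\infty}{\underline{x}}$ is a bounded complex of flat modules concentrated in cohomological degrees $[0, r]$ (with $r$ the length of $\underline{x}$), and $\tau^{\ge n+1} M_\p$ has cohomology only in degrees $\ge n+1$, a routine double-complex argument forces $\KK{\infty}{\underline{x}} \tns \tau^{\ge n+1} M_\p$ to vanish in degrees $\le n$. The long exact sequence then collapses to give
\[\HKinf{n}{\underline{x}}{M_\p} \;\cong\; \HKinf{0}{\underline{x}}{H^n(M)_\p},\]
i.e.\ the $(\underline{x})$-torsion submodule of $H^n(M)_\p$. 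A witness $m \in H^n(M)$ for $\p \in \wass H^n(M)$ localises to a nonzero element of this torsion module — its annihilator in $R_\p$ has radical $\p R_\p \supseteq (\underline{x})$, so powers of each $x_i$ kill $m/1$ — yielding the desired nonvanishing. The ``in particular'' vanishing statement then follows because any nonzero bounded-below $M$ has some $H^n(M) \ne 0$, and $\wass H^n(M) \ne \emptyset$ by Lemma \ref{weakass}(\ref{weakassb}).

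For (b), the natural tool is the hypercohomology spectral sequence
\[E_2^{p,q} = \HKinf{p}{\underline{x}}{H^q(M_\p)} \;\Longrightarrow\; \HKinf{p+q}{\underline{x}}{M_\p},\]
which converges because $\KK{\infty}{\underline{x}}$ is bounded. Minimality of $\p$ in $\wass \bigoplus_i H^i(M)$, combined with Lemma \ref{weakass}(\ref{weakassa}), forces $\wass H^q(M_\p) \subseteq \{\p R_\p\}$ for every $q$: any strictly smaller weakly associated prime of $H^q(M_\p)$ would descend to a prime $\q \subsetneq \p$ weakly associated to some $H^i(M)$, contradicting minimality. Applying Lemma \ref{weakass}(\ref{weakasse}) to localisation at each $x_j \in \underline{x}$ then gives $(H^q(M_\p))_{x_j} = 0$, so $\KK{\infty}{\underline{x}} \tns H^q(M_\p)$ collapses to $H^q(M_\p)$ concentrated in degree zero. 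The spectral sequence therefore degenerates at $E_2$ and yields $\HKinf{i}{\underline{x}}{M_\p} \cong H^i(M_\p)$, which is nonzero for the $i$ witnessing $\p \in \wass H^i(M)$. Statement (c) is then immediate: a nonzero $M$ has $\bigoplus_i H^i(M) \ne 0$, so $\wass \bigoplus_i H^i(M) \ne \emptyset$ by Lemma \ref{weakass}(\ref{weakassb}); DCC on primes supplies a minimal element of this set, which lies in $\supp M$ by (b).

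The main obstacle I expect is the minimality step in (b): in the non-Noetherian setting one cannot appeal to primary decomposition, so translating ``$\p$ is minimal in $\wass \bigoplus H^i(M)$'' into the pointwise annihilator statement $(H^q(M_\p))_{x_j}=0$ must go through the localisation formula Lemma \ref{weakass}(\ref{weakasse}) rather than through any direct radical-of-annihilator computation. Convergence of the spectral sequence is painless thanks to the boundedness of $\KK{\infty}{\underline{x}}$, and the truncation argument for (a) is standard.
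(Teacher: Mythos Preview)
Your proof is correct, and the overall strategy matches the paper's: reduce to the local case via Lemma~\ref{ringsup} and Lemma~\ref{weakass}(\ref{weakassa}), then exploit properties of weakly associated primes to force the relevant \v{C}ech cohomology to be nonzero.

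The tactical packaging differs slightly. For (a), the paper does not invoke the truncation triangle; instead it runs an induction on the length of $\underline{x}$, using the long exact sequence of $\Ga_{\wV(x)}M\to M\to M_x$ at each step to identify $\HKinf{0}{\underline{x}}{M}$ with the $(\underline{x})$-torsion submodule $\Ga_{\underline{x}}\HH{0}{}{M}$. Your truncation-plus-double-complex argument reaches the same identification by a different bookkeeping device. For (b), the paper avoids the hypercohomology spectral sequence entirely: once you have established $(H^q(M_\p))_{x_j}=0$ for every $q$ and every $x_j\in\underline{x}$, it follows that $(M_\p)_{x_j}$ is acyclic, so each factor $(R\to R_{x_j})\tns M_\p$ is quasi-isomorphic to $M_\p$, and hence $\Kinf{\underline{x}}{M_\p}\cong M_\p$ directly. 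Your spectral sequence collapses for exactly this reason, so the extra machinery is harmless but unnecessary. The deduction of (c) from (b) is identical in both arguments.
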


Note, we will write $\supp_R M$ for $\supp M$ if there is any confusion over which ring we are computing the support.  

\begin{proof}[Proof of Theorem \ref{main6} (\ref{main6a})]

We may assume $n=0$.  Suppose $\p\in \wass_R \HH{0}{}{M}$.    By Lemma \ref{ringsup}, it suffices to show that $\p R_\p\in \supp_{R_\p} M_\p$. Since $\HH{0}{}{M}_\p\cong \HH{0}{}{M_\p}$, Lemma \ref{weakass} (1) implies that $\p R_\p\in \wass \HH{0}{}{M_\p}$.  Therefore, we can assume that $R$ is local with maximal ideal $\mm$ and that $\p=\mm$.  

We claim that for every $x\in \mm$, the exact triangle $\Ga_{\wV(x)} M\to M\to M_x\to$ yields
\[\cdots\to 0\to \HKinf{0}{x}{M}\to \HH{0}{}{M}\to \HH{0}{}{M}_x\to \cdots.\]
Therefore $\HKinf{i<0}{x}{M}=0$ and 
\[\HKinf{0}{x}{M}=\HHbig{0}{}{\Kinf{x}{M}}=\Ga_x \HH{0}{}{M}.\]  
By induction, for any $\underline{x}=x_1,\dots,x_n\in \mm$
\[\HKinf{0}{\underline{x}}{M}=\HKinfbig{0}{x_n}{\Kinf{x_1,\dots,x_{n-1}}{M}}=\Ga_{\underline{x}} \HH{0}{}{M}.\]
By Lemma \ref{weakass} (4), $\mm\in \wass \Hck{0}{\underline{x}}{M}$, and so $\Hck{0}{\underline{x}}{M}$ does not vanish by Lemma \ref{weakass} (2).
\end{proof}

\begin{proof}[Proof of Theorem \ref{main6} (\ref{main6b}) and (\ref{main6c})]
We write $\HH{}{}{M}$ for $\bigoplus \HH{i}{}{M}$.  If the primes ideals of $R$ satisfy DCC, then $\HH{}{}{M}$ has a minimal weakly associated prime for all nonzero $M\in \DD{R}$, by Lemma \ref{weakass} (2).  Thus (\ref{main6b}) implies (\ref{main6c})

We now show (\ref{main6b}).  Let $\p\in \min \wass \HH{}{}{M}$.  As in the proof of (\ref{main6a}),  $\p R_\p$ is a weakly associated prime of $\HH{}{}{M_\p}$.    By Lemma \ref{weakass} (\ref{weakasse}), $\p R_\p$ is a minimal weakly associated prime.  Thus, we assume that $R$ is a local ring with maximal ideal $\mm$, and that $\p=\mm$.

Since $\mm$ is the only weakly associated prime of $\HH{}{}{M}$. In this case, for any element of $\mu\in \HH{}{}{M}$, $\mm$ is the only prime minimal over $\ann \mu$, and so $\sqrt{\ann \mu}=\mm$.  Therefore every element of $\HH{}{}{M}$ is $\mm$-torsion.  It follows that that $\HH{}{}{M}_x=0$ for any $x\in \mm$, and therefore 
\[\Kinf{x}{M}=(R\to R_x)\tns M\cong M=M.\]
It follows that that $\Kinf{\underline{x}}{M_\mm}\cong M\ne 0$ for all finite sequences $\underline{x}\in M$, and so $\mm\in \supp M$.  
\end{proof}

\begin{example}
Set 
\[R=\frac{k[x_2,x_3,x_4,\cdots]}{({x_2}^2,{x_3}^3,{x_4}^4,\cdots)}.\]
In \cite{Neeman00}, Neeman shows that the collection of localising subcategories of $\DD{R}$ is atrocious.   However, the support theory is simple:  $\spec R$ has one prime ideal $\mm=(x_2,x_3,x_4,\dots)$ and so $\DD{R}$ is supportive by the previous theorem.  For $M\in \DD{R}$, then  $\supp M=\{\mm\}$  unless $M$ is acyclic.  
\end{example}

\begin{example}\label{Jan}
We now give an example of a ring such that $\DD{R}$ is supportive, but Foxby's support does not detect vanishing.  In \cite[Example 5.34]{Stovicek17}, \v{S}\v{t}ov\'{i}\v{c}ek describes a commutative ring $R$ with the following properties.  First, the ring $R$ is a valuation domain with prime ideals $0$ and $\mm$.  Furthermore, $\tor_{>0}(k,k)=0$ where  $k=R/\mm$, then.  Let $Q$ be the quotient field of $R$.  Note that telescope conjecture fails for $\DD{R}$ by \cite{Keller94}.  Also $\DD{R}$ is supportive since $\spec R$ satisfies DCC on prime ideals.  

Let $M$ be the cokernel of the composition
\[\mm\hookrightarrow R\hookrightarrow Q.\]
We claim that $M\lns k(\p)=0$ for all $\p\in \spec R$, i.e. Foxby's support of $M$ is empty.  Indeed, $M$ is quasi-isomorphic to the complex
\[(R\to k)\lns (R\to Q).\]
Since $R$ has only two prime ideals $k(\p)$ can either be $k$ or $Q$.  However $(R\to k)\tns k$ and $(R\to Q)\lns Q$ are both acyclic, proving the claim.
\end{example}

\subsection{Properties of support}
In this section we show that support over non-Noetherian rings behaves similarly to support over Noetherian rings.  Note that a careful examination of the proofs of Section \ref{sup} show that even though we do not have the supportive condition for all complexes, the results still hold for $\DD{R}$.  

\begin{prop}
Let $R$ be a ring, $\HH{\ll 0}{}{M}$ and $W\sbe R$ be a multiplicatively closed subset.  Then 
\[\supp M_W=\{\p\in \supp M\mid \p\cap W=\emptyset.\}\]
\end{prop}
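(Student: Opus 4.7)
The plan is to deduce the equality directly from the characterisation of support given in Lemma \ref{ringsup}, after checking how the extended Koszul complex $\Kinf{-}{-}$ and the residue localisation behave under the flat base change $R \to R_W$. The whole argument reduces to observing that both sides consist of the same primes, once one uses the standard identification $\spec R_W \leftrightarrow \{\p \in \spec R \mid \p \cap W = \emptyset\}$.

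First I would fix a prime $\p \in \spec R$ with $\p \cap W = \emptyset$, set $\q = \p R_W$, and record two compatibilities: (a) $(M_W)_\q \iso M_\p$ in $\DD{R_\p}$, since inverting $R \setminus \p$ absorbs the prior inversion of $W \sbe R \setminus \p$; and (b) for any finite sequence $\underline{x} = x_1,\dots,x_n \in \p$, the base-change identity
\[\Kinf{\underline{x}/1}{R_W} \;=\; \Kinf{\underline{x}}{R} \tns_R R_W,\]
which is immediate from $(R_W)_{x/1} = (R_x)_W$ together with the fact that $\Kinf{-}{-}$ is built as the tensor product of two-term complexes $R \to R_{x_i}$.

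Next, I would observe that every element of $\q$ has the form $x/s$ with $x \in \p$ and $s \in W$, and that since $s$ is already a unit in $R_W$ we have $(R_W)_{x/s} = (R_W)_{x/1}$; hence any finite sequence in $\q$ yields, up to isomorphism, an extended Koszul complex indexed by $\underline{x}/1$ with $\underline{x} \in \p$. Combining (a) and (b) and tensoring the base-change identity with $(M_W)_\q$ over $R_W$, I would conclude
\[\HKinf{i}{\underline{x}/1}{(M_W)_\q} \;\cong\; \HKinf{i}{\underline{x}}{M_\p}\]
for every $i$ and every finite $\underline{x} \in \p$.

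Finally, Lemma \ref{ringsup} applied over $R_W$ characterises $\q \in \supp M_W$ as the non-vanishing of these cohomologies for some $i$, for every finite $\underline{y} \in \q$; by the previous reductions, this is exactly the condition $\p \in \supp_R M$, giving the claimed equality. The main obstacle, which is really a bookkeeping point rather than a conceptual difficulty, is the base-change identity for $\Kinf{-}{-}$ and the compatibility of iterated localisation $(R_W)_{x/1}$ with $(R_x)_W$; everything else is formal and no hypothesis on the cohomology range of $M$ is needed.
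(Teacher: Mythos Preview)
Your argument is correct, but it interprets $\supp M_W$ as the support computed over $R_W$ (then identified with a subset of $\spec R$ via $\spec R_W \hookrightarrow \spec R$), whereas the paper computes $\supp_R M_W$ with $M_W$ regarded as an object of $\DD{R}$. Both readings yield the same set, so there is no error, only a difference of viewpoint.

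The paper's route is shorter. For $\p$ with $\p\cap W=\emptyset$ it simply notes $(M_W)_\p=M_\p$ as $R$-complexes, so the defining condition ``$\Ga_\wV(-)_\p\ne 0$ for every Thomason $\wV\ni\p$'' from Lemma~\ref{subset} is literally the same for $M$ and $M_W$; no base-change identity for $\Kinf{-}{-}$ or bookkeeping with denominators is needed. For $\p$ with $\p\cap W\ne\emptyset$, it picks $w\in\p\cap W$ and observes $\Ga_{\wV(w)}(M_W)_\p=0$ (because $w$ is already a unit on $M_W$, so $M_W\to (M_W)_w$ is an isomorphism), which immediately gives $\p\notin\supp_R M_W$. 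Your approach trades this second case away for free---there are no such primes in $\spec R_W$---at the cost of the explicit Koszul/base-change verification in the first case. You are also right that the hypothesis $\HH{\ll 0}{}{M}=0$ is not used in either argument.
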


\begin{proof}
 For any $\p$ that does not intersect $W$, we have $M_\p={(M_W)}_\p$ and so\break $\p\in \supp M_W$ if and only if $\p\in \supp M$.  If there is a $w\in \p\cap W$ then the equality
\[\Ga_{\wV(w)} {(M_W)}_\p=0\]
implies $\p\notin\supp M$.  
\end{proof}

\begin{prop}\label{trans}
Let $f\colon R\to S$ be homomorphism of commutative rings, and $M\in \DD{S}$.  
\begin{enumerate}
\item For any Thomason subset $\wV\sbe \spec R$, the morphism ${\aspec f }^{-1}(V)$ is Thomason.  In particular, the $\aspec f $ is continuous in both the Hochster dual topology and the localising topology.  Furthermore,  the following are isomorphisms in $D(R)$
\[ \Ga_{\wV} M\cong \Ga_{{\aspec f }^{-1}(\wV)} M\quad\quad\quad L_{\wV} M\cong L_{{\aspec f }^{-1}(\wV)} M.\]
\item All Thomason subsets $\wV,\wU\sbe \spec R$ satisfy
\[\supp_S M\cap {\aspec f }^{-1}(\wV\cap\wco{\wU})=\supp_S \Ga_{{\aspec f }^{-1}(\wV)}L_{{\aspec f }^{-1}(\wU)} M.\]
\item If $\HH{\ll 0}{}{M}$, then \[\aspec f (\supp_ S M)=\supp_R M.\]
\end{enumerate}
\end{prop}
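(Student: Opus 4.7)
For part (1), my starting point is Lemma \ref{zappy}(\ref{zappy3}): every Thomason subset of $\spec R$ is a union of basic closed sets $\wV(\underline x)$ for finite $\underline x \sbe R$.  Since $\aspec f^{-1}(\wV(\underline x)) = \wV(f(\underline x))$ is again of this form, and preimages commute with unions, intersections, and complements, preimages of Thomason sets are Thomason, and $\aspec f$ is continuous in both the Hochster dual and the localising topologies.  For the isomorphism $\Ga_\wV M \cong \Ga_{\aspec f^{-1}(\wV)} M$, when $\wV = \wV(\underline x)$ the two sides are represented by the identical complex $K_\infty(\underline x) \otimes_R M = K_\infty(f(\underline x)) \otimes_S M$, using Lemma \ref{zappy}(\ref{zappy3}) and the fact that $M$ carries an $S$-module structure through $f$, so the stable Koszul complex gives the same tensor of localisations whether computed over $R$ or over $S$.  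The general Thomason case follows by passing to filtered colimits over finite $\underline x$, and the assertion for $L_\wV$ follows from the idempotent triangle in Theorem \ref{idempotenttriangle}.

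Part (2) is then an immediate consequence of Theorem \ref{main1}(\ref{main1c}), applied in $\DD{S}$ to the object $\Ga_{\aspec f^{-1}(\wV)} L_{\aspec f^{-1}(\wU)} M$:  its support equals $\supp_S M \cap \aspec f^{-1}(\wV) \cap \wco{\aspec f^{-1}(\wU)}$, and this coincides with $\supp_S M \cap \aspec f^{-1}(\wV \cap \wco{\wU})$ since preimages commute with complement and intersection.

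For part (3), the inclusion $\aspec f(\supp_S M) \sbe \supp_R M$ is immediate and does not require the bounded-below hypothesis.  Given $\q \in \supp_S M$ with $\p = \aspec f(\q)$, any Thomason pair $(\wV,\wU)$ in $\spec R$ with $\p \in \wV \cap \wco{\wU}$ pulls back to a pair whose intersection contains $\q$, so $\Ga_{\aspec f^{-1}(\wV)} L_{\aspec f^{-1}(\wU)} M \ne 0$ in $\DD{S}$; by part (1) the isomorphic object in $\DD{R}$ is also nonzero, placing $\p$ in $\supp_R M$.

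The reverse inclusion is the main obstacle.  Given $\p \in \supp_R M$, I would first reduce to the case where $R$ is local with maximal ideal $\mm = \p$ by replacing $R$, $S$, $M$ with $R_\p$, $S_\p := f(R \setminus \p)^{-1} S$, and $M_\p$, invoking part (1) and the preceding proposition on localisation to confirm that both sides of the claimed equality are preserved.  After this reduction, primes $\q \in \spec S$ with $f^{-1}(\q) = \mm$ are exactly those in $\wV(f(\mm)) \cap \spec S_\mm$.  For each finite $\underline x \sbe \mm$, Lemma \ref{ringsup} gives $K_\infty(\underline x;M) \ne 0$ in $\DD{S}$; since $M$ is bounded below, so is this complex, and Theorem \ref{main6}(\ref{main6a}) then yields $\supp_S M \cap \wV(f(\underline x)) \ne \emptyset$.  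A sharper application of the same theorem to $\Ga_{\wV(f(\underline x))} L_{\aspec f^{-1}(\Ze{\mm})} M$, together with part (2), shows that each $\supp_S M \cap \wV(f(\underline x)) \cap \spec S_\mm$ is nonempty, producing a filtered family whose total intersection is $\supp_S M \cap \wV(f(\mm)) \cap \spec S_\mm$.  Any prime in this intersection witnesses the reverse inclusion.  The delicate step is to conclude that the intersection is nonempty by a quasi-compactness argument: the natural venue is the patch (constructible) topology on $\spec S$, in which $\wV(f(\underline x))$ and $\spec S_\mm$ are closed and the ambient space is compact Hausdorff, with care required because $\supp_S M$ itself need not be patch-closed in general.
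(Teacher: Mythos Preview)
Your treatment of parts (1) and (2) is essentially the paper's: same reduction to basic Thomason sets $\wV(\underline x)$, same identification of the stable Koszul complexes over $R$ and $S$, same passage to filtered colimits, and the same appeal to Theorem~\ref{main1}(\ref{main1c}) for part (2).

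For part (3), the paper's route is much shorter than yours.  It does not localise at $\p$ or invoke compactness; it simply records, for arbitrary Thomason $\wV,\wU\sbe\spec R$ with $\p\in\wV\cap\wco{\wU}$, the chain
\[
\Ga_\wV L_\wU M=0
\ \Longleftrightarrow\
\supp_S\!\big(\Ga_{\aspec f^{-1}(\wV)}L_{\aspec f^{-1}(\wU)}M\big)=\emptyset
\ \Longleftrightarrow\
\supp_S M\cap \aspec f^{-1}(\wV\cap\wco{\wU})=\emptyset,
\]
the first step by part (1) together with Theorem~\ref{main6}(\ref{main6a}) (the object $\Ga_\wV L_\wU M$ is again bounded below), the second by part (2).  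Your reduction to a local base ring and the filtered family of sets $\wV(f(\underline x))$ are unnecessary to reach this equivalence.

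That said, you have put your finger on a point the paper's argument leaves implicit.  The displayed chain says precisely that $\p\in\supp_R M$ if and only if every basic Skula-open neighbourhood of $\p$ in $\spec R$ meets $\aspec f(\supp_S M)$; in other words it yields
\[
\supp_R M=\overline{\aspec f(\supp_S M)}\qquad\text{(closure in }\lspc R\text{)}.
\]
Upgrading this to the literal equality $\supp_R M=\aspec f(\supp_S M)$ amounts to showing the image is already closed in the localising topology, and the paper does not supply that step.  Your patch-topology compactness idea is aimed exactly at this gap, and the obstacle you name is genuine: $\supp_S M$ is closed in $\lspc S$ by Theorem~\ref{main1}(\ref{main1e}), but $\lspc S$ is in general strictly finer than the constructible topology, so $\supp_S M$ need not be patch-compact.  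So neither your argument nor the paper's, as written, closes this last step; the paper's shorter proof is really a proof of the closure statement.
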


\begin{proof}
For $\underline{x}=x_1,\dots,x_n\in R$, we have ${\aspec f }^{-1}(\wV(\underline{x}))=\wV(d(\underline{x}))$.  This proves the first statement of (1).  Consider the idempotent triangle
\[\Ga_{\wV(\underline{x})} R\to R\to L_{\wV(\underline{x})}  R\to\]
and apply $S\lns-$.  By Lemma \ref{zappy} (\ref{zappy3}) and Theorem \ref{idempotenttriangle} (\ref{tensorapp}), we have 
\[\Ga_{\wV(\underline{x})} S\cong \Ga_{\wV(f(\underline{x}))} S\cong \Ga_{{\aspec f }^{-1}(\wV(\underline{x}))} S\quad\quad\quad L_{\wV(\underline{x})} S\cong L_{\wV(f(\underline{x}))} S\cong L_{{\aspec f }^{-1}(\wV(\underline{x}))} S.\]
Statement (1) follows for $M=S$  now follows by taking the direct limit of the above functors over all $\wV(\underline{x})\sbe \wV$.  Note that this result applies to all 

Statement (2) follows from (1) and Theorem \ref{main1} (\ref{main1c}).  We now prove (3).   Let $\p\in \spec R$ and $\wV,\wU\in \spec R$ such that $\p\in \wV\cap\wco{\wU}$.  Statement  (2) and the supportive imply that the  following are equivalent
\[\Ga_{\wV} L_{\wU} M=0 \iff \supp_S  \Ga_{{\aspec f }^{-1}(\wV)}L_{{\aspec f }^{-1}(\wU)} M=\emptyset\iff \supp_S M\cap {\aspec f }^{-1}(\wV\cap\wco{\wU})=\emptyset.\]
\end{proof}

\begin{remark}

Proposition \ref{trans} (3) holds for all $M\in \DD{S}$ if $\DD{S}$ is supportive.  In particular, if $\DD{S}$ is supportive and $f\colon R\to S$ is faithfully flat, then $\DD{R}$ is supportive.

\end{remark}

\section{Support and the Bousfield lattice}\label{framestuff}



Again, let $\T$ be a rigidly compactly generated tensor triangulated category.  

\begin{definition}\hfill
\begin{enumerate}
\item For any element $T\in \T$, set
\[\Ac{T}=\{S\in \T\mid T\tns S=0\}.\]
Any class of the form $\Ac{T}$ is called Bousfield.  
\item A Bousfield class $\Ac{T}$ is idempotent if $\Ac{T}=\Ac{T\tns T}$.  
\item Let $\Ab{\T}$ denote the collection of idempotent Bousfield classes.  Order $\Ab{\T}$ via reverse inclusion.  The elements $\Ac{1}$  and $\Ac{0}$ are the maximum and minimum respectively.  
\end{enumerate}
\end{definition}

For every pair of Thomason subsets $\wV,\wU\sbe \spc\T^c$, the kernel of the functor $\Ga_\wV L_\wU$ is precisely the Bousfield class $\Ac{\Ga_\wV L_\wU 1}$ by Theorem \ref{idempotenttriangle} (\ref{tensorapp}).  Moreover, since $\Ga_\wV L_\wU 1$ is idempotent, this Bousfield class is also idempotent.

The first statement of the following theorem was originally proven by Ohkawa in \cite{Ohkawa89} for the stable homotopy category.  The second statement was also proven in this context by Bousfield in \cite{Bousfield79} and by Hovey and Palmieri in \cite[Proposition 3.4]{HoveyPalmieri98}.  The following result was proven in our generality by Krause and Iyengar in \cite[Theorem 3.1, Proposition 6.2]{IyengarKrause13}.

\begin{theorem}

The class $\Ab{\T}$ is a set.  Moreover, it is a frame where arbitrary joins and finite meets are given by
\[\bigjn_i \Ac{T_i}=\bigcap_i\Ac{T_i}=\Ac{\coprod_i T_i}\quad\quad\quad \Ac{T_1}\mt\cdots\mt\Ac{T_n}=\Ac{T_1\tns\cdots\tns T_n}\]
\end{theorem}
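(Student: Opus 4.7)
The plan splits the statement into three parts: that $\Ab{\T}$ is a set, that the displayed formulas give arbitrary joins and finite meets, and that the resulting bounded lattice satisfies the infinite distributive law required of a frame. The order is reverse inclusion, so the join of $\{\Ac{T_i}\}$ is the largest element contained in every $\Ac{T_i}$ as a subclass (their intersection, provided this is still an idempotent Bousfield class), while the meet is the smallest idempotent Bousfield class containing every $\Ac{T_i}$. For the set-sized claim I would invoke the Iyengar--Krause result cited directly before the theorem, generalising Ohkawa: each Bousfield class is determined by a set-sized piece of data extracted from its interaction with the essentially small subcategory $\T^c$, yielding a cardinality bound of the shape $2^{2^{|\T^c|}}$ on $\Ab{\T}$.

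For joins, exactness of $S \tns -$ together with the distributivity of tensor over coproducts gives
\[\bigcap_i \Ac{T_i} = \{S \in \T : S \tns T_i = 0 \text{ for every } i\} = \{S \in \T : S \tns \coprod_i T_i = 0\} = \Ac{\coprod_i T_i}.\]
Idempotence then passes to this intersection: if $S \tns (\coprod_i T_i) \tns (\coprod_j T_j) = 0$, then $S \tns T_i \tns T_i = 0$ for each $i$, whence $S \in \Ac{T_i \tns T_i} = \Ac{T_i}$ using the assumed idempotence of each summand, and hence $S \in \bigcap_i \Ac{T_i}$. Therefore $\Ac{\coprod_i T_i}$ lies in $\Ab{\T}$ and is the supremum by the order-theoretic remark above.

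The meet contains the main obstacle. The inclusion $\Ac{T_i} \sbe \Ac{T_1 \tns \cdots \tns T_n}$ is immediate in each coordinate, but the universal property requires the converse, that any idempotent Bousfield class $\Ac{C}$ containing every $\Ac{T_i}$ also contains $\Ac{T_1 \tns \cdots \tns T_n}$. I would handle $n=2$ and iterate: given $S$ with $S \tns T_1 \tns T_2 = 0$, the containment $\Ac{T_1} \sbe \Ac{C}$ applied to $S \tns T_2$ yields $(S \tns T_2) \tns C = 0$; applying $\Ac{T_2} \sbe \Ac{C}$ next to $S \tns C$ gives $S \tns C \tns C = 0$; and finally the idempotence of $\Ac{C}$ upgrades this to $S \tns C = 0$. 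The idempotence of the \emph{ambient} class is precisely what drives the argument, and this is the subtle point distinguishing the idempotent lattice from the lattice of all Bousfield classes.

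The infinite distributive law is then a formal consequence of the two formulas:
\[\Ac{T} \mt \bigjn_i \Ac{T_i} = \Ac{T \tns \coprod_i T_i} = \Ac{\coprod_i (T \tns T_i)} = \bigjn_i \Ac{T \tns T_i} = \bigjn_i \bigl(\Ac{T} \mt \Ac{T_i}\bigr),\]
using only distributivity of $\tns$ over coproducts. This completes the verification that $\Ab{\T}$ is a frame.
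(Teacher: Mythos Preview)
The paper does not supply its own proof of this theorem at all: the paragraph preceding the statement attributes the set-sizedness to Ohkawa (and Iyengar--Krause in this generality) and the frame structure to Bousfield, Hovey--Palmieri, and Iyengar--Krause, and then moves on. So there is nothing in the paper to compare against beyond the citation to \cite[Theorem 3.1, Proposition 6.2]{IyengarKrause13}, which you also invoke for the cardinality bound. Your write-up is therefore strictly more than what the paper offers.

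Your argument is essentially correct and is in fact the standard one. The join computation and the frame distributive law are clean. There is one small omission in the meet step: before checking the universal property of $\Ac{T_1\tns T_2}$ among idempotent classes, you must know that $\Ac{T_1\tns T_2}$ itself lies in $\Ab{\T}$, i.e.\ that $\Ac{(T_1\tns T_2)^{\tns 2}}=\Ac{T_1\tns T_2}$. Your universal-property argument does not cover this, since it assumes the comparison class $\Ac{C}$ is already idempotent. The fix is the same two-step trick you use later: if $S\tns T_1^{\tns 2}\tns T_2^{\tns 2}=0$, then idempotence of $\Ac{T_1}$ gives $S\tns T_1\tns T_2^{\tns 2}=0$, and idempotence of $\Ac{T_2}$ gives $S\tns T_1\tns T_2=0$. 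With that line added, the proof is complete.
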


Pointless topology  now creeps into our theory.   Suppose $\supp T=\emptyset$.  Then for every $\p\in \spc\T^c$ there is a pair of Thomason subsets $\wV_\p,\wU_\p$ with $\p\in \wV_\p\cap \wco{\wU_\p}$ with $\Ga_{\wV_\p}L_{\wU_\p} T=0$, or in other words with $T\in \Ac{\Ga_{\wV_\p}VL_{\wU_\p}  1}$.  Hence  
\[T\in \bigjn_{\p\in \spc\T^c} \Ac{\Ga_{\wV_\p}VL_{\wU_\p}  1}.\]
Now $\T$ is supportive precisely when the above Bousfield class always vanishes, i.e.
\[\bigjn_{\p\in \spc\T^c} \Ac{\Ga_{\wV_\p} L_{\wU_\p}  1}=0=\Ac{1}=\Ac{\Ga_{\spc\T^c} L_\emptyset 1}.\]
Compare this equality the union of basic open sets in $\lspc \T$
\[\bigcup_{\p\in \spc\T^c} \wV_\p\cap \wco{\wU_\p}=\spc\T^c\cap \wco{\emptyset}.\]
 If two unions of basic open neighbourhoods in the Skula topology are set theoretically equal,  do they define the same idempotent Bousfield class? If this is so, then $\T$ is supportive.  As we will see in Theorem \ref{main4}, the converse is also true.

Recall from Definition \ref{btt} and the following discussion that $\TT{\T^c}$ is both the frame of tensor ideals and the  frame of Thomason sets of $\spc \T^c$, i.e.\ the frame associated to the Hochster dual $\hspc \T^c$.   Also recall from Definition \ref{skuladef}  that $\wf_{\hspc \T^c}\colon \skula{\hspc \T^c }\to \hspc \T^c$ is the map induced by the identity function.  Recall also that by definition $\skula{\hspc \T^c }=\lspc \T$.

\begin{theorem}\label{main4}
Let $\T$ be a rigidly compactly generated tensor triangulated category.  
  
\begin{enumerate}
\item\label{main42} Moreover,  the assignment $\wV\mapsto \Ac{\Ga_\wV 1}$ defines a  frame homomorphism
\[\ga_\T\colon  \TT{\T^c}\to \Ab{\T}.\]
\item\label{main43} The following are equivalent.
	\begin{enumerate}[(a)]
	\item\label{main43a} The category $\T$ is supportive.  
	\item\label{main43b} For any collections of Thomason subsets $\{\wV_i\},\{\wU_i\},\{\wV_j\},\{\wU_j\}\sbe\nobreak \TT{\T^c}$, if the sets
	\[\bigcup_{i} \wV_i\cap\wco{\wU_i}=\bigcup_{j} \wV_j\cap\wco{\wU_j}\]
are equal, then the following Bousfield classes coincide
	\[\Ac{\coprod_i \Ga_{\wV_i}L_{\wU_i} 1}=\Ac{\coprod_j \Ga_{\wV_j}L_{\wU_j} 1}.\]
	\item\label{main43c} There exists a frame homomorphism $\eta$ which completes the following commutative diagram.
	\[\xymatrix{
	\TT{\T^c}  \ar[d]_{\bF\wf_{\hspc \T^c}}\ar[r]^{\ga_\T} & \Ab{\T}  \\
	\Fr{\lspc\T} \ar@{.>}[ur]^\eta & 	\\
	}\]
	\end{enumerate}
\end{enumerate}
\end{theorem}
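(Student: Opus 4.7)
My plan is to dispatch part (\ref{main42}) by direct verification and to establish part (\ref{main43}) via the cycle (\ref{main43a}) $\Rightarrow$ (\ref{main43b}) $\Rightarrow$ (\ref{main43c}) $\Rightarrow$ (\ref{main43a}). For part (\ref{main42}), preservation of $0$ and $1$ is immediate from $\Ga_{\emptyset} 1 = 0$ and $\Ga_{\spc \T^c} 1 = 1$, and preservation of finite meets reduces via Theorem \ref{idempotenttriangle}(\ref{Intersections}) and (\ref{tensorapp}) to the isomorphism $\Ga_{\wV \cap \wV'} 1 \cong \Ga_\wV 1 \tns \Ga_{\wV'} 1$. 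For arbitrary joins, both $\Ac{\Ga_{\bigcup_i \wV_i} 1}$ and $\Ac{\coprod_i \Ga_{\wV_i} 1}$ annihilate precisely those $T$ with $\Ga_{\wV_i} T = 0$ for every $i$, since ${\T_{\bigcup \wV_i}}^c$ is the thick tensor ideal generated by the ${\T_{\wV_i}}^c$ and coproducts vanish componentwise.

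For (\ref{main43a}) $\Rightarrow$ (\ref{main43b}), under the supportive hypothesis the identity
\[T \tns \coprod_i \Ga_{\wV_i} L_{\wU_i} 1 \cong \coprod_i \Ga_{\wV_i} L_{\wU_i} T\]
combined with Theorem \ref{main2}(\ref{main1c'}) shows that the Bousfield class of the coproduct depends only on $\supp T \cap \bigcup_i (\wV_i \cap \wco{\wU_i})$, hence only on the set-theoretic union. For (\ref{main43b}) $\Rightarrow$ (\ref{main43c}), I define
\[\eta\Bigl(\bigcup_i \wV_i \cap \wco{\wU_i}\Bigr) = \Ac{\coprod_i \Ga_{\wV_i} L_{\wU_i} 1},\]
with (\ref{main43b}) providing exactly the required well-definedness. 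Join preservation is built into the definition; finite meet preservation reduces via infinite distributivity and the identity $(\wV_1 \cap \wco{\wU_1}) \cap (\wV_2 \cap \wco{\wU_2}) = (\wV_1 \cap \wV_2) \cap \wco{\wU_1 \cup \wU_2}$ to the basic case, handled by Theorem \ref{idempotenttriangle}(\ref{Intersections}) and (\ref{tensorapp}). The diagram commutes since $\bF\wf_{\hspc \T^c}(\wV) = \wV \cap \wco{\emptyset}$ and $L_\emptyset = \id$.

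For (\ref{main43c}) $\Rightarrow$ (\ref{main43a}), the crucial step is to show that the frame structure pins down $\eta$ on every basic open, although the diagram only directly determines it on Thomason subsets. By commutativity $\eta(\wU) = \Ac{\Ga_\wU 1}$, so $\eta(\wco{\wU})$ is forced to be a complement of $\Ac{\Ga_\wU 1}$ in $\Ab{\T}$. The class $\Ac{L_\wU 1}$ is such a complement: the meet vanishes because $\Ga_\wU L_\wU 1 = 0$, and the idempotent triangle forces $T = 0$ whenever both $\Ga_\wU T = 0$ and $L_\wU T = 0$, so the join equals $\Ac{1}$. Since $\Ab{\T}$ is a frame and therefore a distributive lattice, complements are unique, so $\eta(\wco{\wU}) = \Ac{L_\wU 1}$ and hence $\eta(\wV \cap \wco{\wU}) = \Ac{\Ga_\wV L_\wU 1}$. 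Now given $T$ with $\supp T = \emptyset$, I choose for each $\p \in \spc\T^c$ a basic open $W_\p = \wV_\p \cap \wco{\wU_\p}$ containing $\p$ with $\Ga_{\wV_\p} L_{\wU_\p} T = 0$; since $\bigcup_\p W_\p = \spc\T^c$ is the top of $\Fr{\lspc\T}$, applying $\eta$ yields $\Ac{1} = \Ac{\coprod_\p \Ga_{\wV_\p} L_{\wU_\p} 1}$, and because $T$ annihilates each summand, $T = 0$. The main obstacle is exactly this uniqueness-of-complements argument: translating the existence of the abstract frame homomorphism $\eta$ into the concrete formula on basic opens relies on the frame-theoretic distributivity of $\Ab{\T}$, which is the subtlest ingredient of the whole proof.
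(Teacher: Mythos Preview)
Your proof is correct and follows essentially the same route as the paper: the same direct verification for part (\ref{main42}) (including the key fact that ${\T_{\bigcup \wV_i}}^c$ is the thick tensor ideal generated by the ${\T_{\wV_i}}^c$), and the same cycle (\ref{main43a}) $\Rightarrow$ (\ref{main43b}) $\Rightarrow$ (\ref{main43c}) $\Rightarrow$ (\ref{main43a}) for part (\ref{main43}). The only cosmetic difference is that the paper isolates the complement identity $\Ac{\Ga_\wV 1}\jn \Ac{L_\wV 1}=\Ac{1}$, $\Ac{\Ga_\wV 1}\mt\Ac{L_\wV 1}=\Ac{0}$ as a separate lemma, whereas you prove it inline and then invoke uniqueness of complements in a distributive lattice to pin down $\eta(\wco{\wU})$---which is exactly the same argument.
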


\begin{corollary}\label{main4cor}
A rigidly compactly generated tensor triangulated category $\T$ is supportive if the idempotent Bousfield frame $\Ab{\T}$ is spatial.  

\end{corollary}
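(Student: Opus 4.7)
The plan is to prove condition (\ref{main43b}) of Theorem \ref{main4}(\ref{main43}), which by that theorem is equivalent to $\T$ being supportive. Spatiality of $\Ab{\T}$ reduces an equality of elements in the Bousfield frame to a pointwise check against every prime $\bp \in \spc \Ab{\T}$, so under the hypothesis $\bigcup_i \wV_i \cap \wco{\wU_i} = \bigcup_j \wV_j \cap \wco{\wU_j}$ it suffices to show, for each such $\bp$, that the relation $\bigjn_i \Ac{\Ga_{\wV_i} L_{\wU_i} 1} \le \bp$ depends only on this union. I will therefore reduce the relation $\Ac{\Ga_\wV L_\wU 1} \le \bp$ to a membership condition on a single point $\p_\bp \in \spc \T^c$ attached to $\bp$.

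To produce $\p_\bp$, I apply the contravariant $\spc$ functor from Theorem \ref{framecats} to the frame homomorphism $\ga_\T\colon \TT{\T^c} \to \Ab{\T}$, obtaining a continuous map $\spc \Ab{\T} \to \spc \TT{\T^c}$. Because $\hspc \T^c$ is spectral and hence sober, Theorem \ref{framecats} identifies it with $\spc \TT{\T^c}$ via $\p \mapsto \Ze{\p}$ (cf.\ Example \ref{ZPThomason}), so I can define $\p_\bp$ by the equation
\[\Ze{\p_\bp}=\ga_\T^*(\bp)=\bigjn\{\wV \in \TT{\T^c} : \Ac{\Ga_\wV 1} \le \bp\}\]
coming from the right adjoint of $\ga_\T$; a short check using that $\ga_\T$ preserves finite meets confirms that $\ga_\T^*(\bp)$ is indeed meet-irreducible. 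The adjunction formula then yields the clean characterisation
\[\Ac{\Ga_\wV 1} \le \bp \iff \wV \sbe \Ze{\p_\bp} \iff \p_\bp \notin \wV.\]

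To extend this to $L_\wU$, I note that the idempotent triangle $\Ga_\wU 1 \to 1 \to L_\wU 1 \to$ together with Theorem \ref{idempotenttriangle}(\ref{tensorapp}) forces the complementarity
\[\Ac{\Ga_\wU 1} \mt \Ac{L_\wU 1} = \Ac{0} \quad\mbox{and}\quad \Ac{\Ga_\wU 1} \jn \Ac{L_\wU 1} = \Ac{1},\]
the second from the observation that any $S$ with $\Ga_\wU S = 0 = L_\wU S$ vanishes. Since $\bp$ is prime in the distributive frame $\Ab{\T}$ and $\bp \ne \Ac{1}$, exactly one of $\Ac{\Ga_\wU 1}, \Ac{L_\wU 1}$ lies below $\bp$; hence $\Ac{L_\wU 1} \le \bp$ iff $\p_\bp \in \wU$, and combining with the previous paragraph gives
\[\Ac{\Ga_\wV L_\wU 1} = \Ac{\Ga_\wV 1} \mt \Ac{L_\wU 1} \le \bp \iff \p_\bp \notin \wV \cap \wco \wU.\]

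Assembling these, $\bigjn_i \Ac{\Ga_{\wV_i} L_{\wU_i} 1} \le \bp$ holds iff $\p_\bp$ avoids each $\wV_i \cap \wco{\wU_i}$, iff $\p_\bp \notin \bigcup_i \wV_i \cap \wco{\wU_i}$; this depends only on the union, so the hypothesis of (\ref{main43b}) equates the two conditions at every prime, and spatiality of $\Ab{\T}$ upgrades this to an equality in $\Ab{\T}$. I expect the main obstacle to be the frame-theoretic identification of the point $\p_\bp$ and the verification that the right adjoint of $\ga_\T$ carries meet-irreducibles to meet-irreducibles via the sobriety of $\hspc \T^c$; once that translation is in hand, the rest is routine bookkeeping with the complementarity of $\Ga_\wU 1$ and $L_\wU 1$ in the Bousfield frame.
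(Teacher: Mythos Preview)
Your proof is correct. Both your argument and the paper's use the same two ingredients---the continuous map $\spc\ga_\T\colon\spc\Ab{\T}\to\hspc\T^c$ and the complementarity of $\Ac{\Ga_\wV 1}$ and $\Ac{L_\wU 1}$ (Lemma~\ref{main41})---but assemble them differently. The paper verifies condition~(\ref{main43c}) of Theorem~\ref{main4}: it observes that complementarity makes the preimage under $\spc\ga_\T$ of each Thomason set clopen, so $\spc\ga_\T$ is continuous for the Skula topology and factors through $\lspc\T$; applying $\bF$ then yields $\eta$. You instead verify condition~(\ref{main43b}) by unpacking this at the level of individual primes: your equivalence $\Ac{\Ga_\wV L_\wU 1}\le\bp \iff \p_\bp\notin\wV\cap\wco\wU$ is precisely the pointwise content of the Skula-continuity statement. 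Your approach is more hands-on and makes the role of spatiality (separation of frame elements by primes) completely transparent; the paper's is more functorial and packages the same computation into a single continuity statement.
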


\begin{example}\label{Lebesgue}
Unfortunately, the idempotent Bousfield frame $\Ab{\T}$ is not always spatial.  Let $\bM$ be the  Lebesgue measurable sets of the real numbers.  Write $A\sim B$ for $A,B\in \bM$ if their symmetric difference has measure 0.  The Boolean algebra $\bB=\bM/\sim$ has no points and thus is a nonspatial frame.  In \cite{Stevenson17a}, Stevenson constructs  a triangulated category $\T$ satisfying our usual assumptions such that $\Ab{\T}=\bB$.  
\end{example}

Before we prove the corollary, we recall an easy lemma. This was observed in the stable homotopy category by Hovey and Palmieri in  \cite[Proposition 4.5]{HoveyPalmieri98}.

\begin{lemma}\label{main41} 
For a Thomason subset $\wV\in\TT{\T^c}$,  the Bousfield classes $\Ac{\Ga_\wV 1}$ and $\Ac{L_\wV 1}$ are complements in $\Ab{\T}$, i.e.
\[\Ac{\Ga_\wV 1}\jn \Ac{L_\wV 1}=\Ac{1}\quad\quad\quad\Ac{\Ga_\wV 1}\mt\Ac{L_\wV 1}=\Ac{0}.\]  
\end{lemma}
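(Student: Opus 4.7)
The plan is to verify both equalities by reducing the frame operations in $\Ab{\T}$ to tensor products and coproducts of generators, and then invoking the idempotent triangle $\Ga_\wV 1 \to 1 \to L_\wV 1 \to$ together with the tensor-theoretic presentation of $\Ga_\wV$ and $L_\wV$ from Theorem \ref{idempotenttriangle}.

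For the meet, I would apply the formula $\Ac{T_1}\mt\Ac{T_2} = \Ac{T_1 \tns T_2}$, reducing the claim to $\Ac{\Ga_\wV 1\tns L_\wV 1} = \Ac{0}$. By Theorem \ref{idempotenttriangle} (\ref{tensorapp}), $\Ga_\wV 1 \tns L_\wV 1 \cong \Ga_\wV(L_\wV 1)$, and since $L_\wV 1$ lies in $\im L_\wV = \ker \Ga_\wV$ by Theorem \ref{idempotenttriangle} (\ref{acyc}), this object vanishes. Hence $\Ac{\Ga_\wV 1 \tns L_\wV 1} = \Ac{0}$.

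For the join, I would use $\Ac{T_1}\jn\Ac{T_2} = \Ac{T_1 \op T_2}$ to reduce the claim to showing $\Ac{\Ga_\wV 1 \op L_\wV 1} = \Ac{1}$. Since $\Ac{1} = \{0\}$, this is equivalent to the implication: if $(\Ga_\wV 1 \op L_\wV 1)\tns S = 0$ then $S = 0$. Using Theorem \ref{idempotenttriangle} (\ref{tensorapp}) once more, the hypothesis is equivalent to $\Ga_\wV S = 0$ and $L_\wV S = 0$ simultaneously. The idempotent triangle
\[\Ga_\wV S \to S \to L_\wV S \to \Sm \Ga_\wV S\]
from Theorem \ref{idempotenttriangle} (1) then forces $S = 0$.

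There is no real obstacle; the argument is a direct unwinding of the frame operations on $\Ab{\T}$ against the idempotent triangle, and the only point worth flagging is the ordering convention: $\Ab{\T}$ is ordered by reverse inclusion, so $\Ac{0} = \T$ is the minimum and $\Ac{1} = \{0\}$ is the maximum, which matches the identifications computed above.
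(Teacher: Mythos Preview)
Your proof is correct and follows essentially the same route as the paper's: both verify the meet via $\Ac{\Ga_\wV 1}\mt\Ac{L_\wV 1}=\Ac{\Ga_\wV 1\tns L_\wV 1}=\Ac{0}$, and both verify the join by observing that any $S$ in $\Ac{\Ga_\wV 1}\cap\Ac{L_\wV 1}$ has $\Ga_\wV S=L_\wV S=0$, whence the idempotent triangle forces $S=0$. Your version is slightly more explicit in citing Theorem~\ref{idempotenttriangle} for each step, but the argument is the same.
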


\begin{proof}  First, note $\Ac{\Ga_\wV 1}\mt \Ac{L_\wV 1}=\Ac{\Ga_\wV 1\tns L_\wV 1}=\Ac{0}$.
Next, if $T$ is in 
\[ \Ac{\Ga_\wV 1}\jn \Ac{L_\wV 1}=\Ac{\Ga_\wV 1}\cap \Ac{L_\wV 1}\]
then the idempotent triangle $\Ga_\wV T\to T\to L_\wV T\to $
implies that $T=0$.  Hence this intersection is $\{0\}=\Ac{1}$. 
\end{proof}

\begin{proof}[{Proof of Corollary \ref{main4cor}}]

Apply the functor $\spc$ to the diagram in Theorem \ref{main4} (\ref{main43}) (\ref{main43c}).  
\[\xymatrix{
	\hspc\T^c  & \spc \Ab{\T}  \ar[l]_{\spc \ga_\T}\\
	\skula{\hspc \T^c } \ar[u]^{\wf_{\hspc\T^c}} & 	\\
}\]
Since $\Ab{\T}$ is spatial, $\Ab{\T}$ and the frame of open sets $\bF\spc \Ab{\T}$ are isomorphic.  Thus to every Bousfield class $\Ac{T}$, we associate the open set $\wD(\Ac{T})$.  

By  Lemma \ref{main41},  the Bousfield classes $\Ac{\Ga_\wV 1}$ and $\Ac{L_\wV 1}$ are complements.  Thus their corresponding open sets $\wD(\Ac{\Ga_\wV 1})$ and $\wD(\Ac{L_\wV 1})$ are set theoretic complements.  In particular, the closed set $\wco{\wD(\Ac{\Ga_\wV 1})}=\wD(\Ac{L_\wV 1})$ is open. We conclude that 
\[{\left(\spc \ga_\T\right)}^{-1}(\wco{\wV})=\left({\left(\spc \ga_\T\right)}^{-1}(\wV)\right)^\wc=\wco{\wD(\Ac{\Ga_\wV 1})}=\wD(\Ac{L_\wV 1})\]
is open, and thus $\spc \ga_\T$ is not only continuous with respect to the Hochster dual  topology, $\hspc \T^c$, but with respect to the localising topology  $\skula{\hspc \T^c }$.  

Returning to the above diagram,  we have actually  just shown that there exists a continuous function $\wg\colon \spc \Ab{\T} \to \skula{\hspc \T^c }$ making the above diagram commute.  Since $\Ab{\T}$ is spatial, $\bF\spc\Ab{\T}\cong \Ab{\T}$ by Theorem \ref{framecats}.  Thus $\eta=\bF \wg$ satisfies the hypotheses of Theorem \ref{main4} (\ref{main43}) (\ref{main43c}).
\end{proof}

\begin{remark}\label{subframe}

The proof of Corollary \ref{main4cor} still holds if we only assume there exists a sub lattice $\bX\sbe \Ab{\T}$ such that
\begin{enumerate}[(a)]
\item $\bX$ is a spatial frame
\item the inclusion map $\bX\hookrightarrow \Ab{\T}$ is a frame homomorphism
\item for any Thomason set $\wV\sbe \spc\T^c$, the Bousfield classes $\Ac{\Ga_{\wV}1}$ and $\Ac{L_\wV 1}$ are in $\wX$.  
\end{enumerate}
We discuss the utility of this remark in Section \ref{Q3}.
\end{remark}

The rest of the section is devoted to proving Theorem \ref{main4}.

\begin{proof}[{Proof of Theorem \ref{main4} (\ref{main42})}]

The frame homomorphism $\ga_\T$ preserves extrema since $\ga_\T (\spc \T)=\Ac{1}$ and $\ga_\T(\emptyset)=\Ac{0}$.  The frame homomorphism $\ga_\T$ preserves finite meets since any Thomason subsets $\wV,\wU\in \TT{\T^c}$ satisfy
\[\Ac{\Ga_\wV 1}\mt \Ac{\Ga_\wU 1}=\Ac{\Ga_\wV 1\tns \Ga_\wU 1}=\Ac{\Ga_\wV \Ga_\wU 1}=\Ac{\Ga_{\wV\cap \wU} 1}\]
by Theorem \ref{idempotenttriangle}.

We now check that $\ga_\T$ preserves arbitrary joins.  Let $\{\wV_i\}\in \TT{\T^c}$ be a family of Thomason subsets, and set $\wV=\bigcup \wV_i$.  Since
\[\Ac{\Ga_\wV 1}\sbe \bigcap_i \Ac{\Ga_{\wV_i}1}=\bigjn_i \Ac{\Ga_{\wV_i}1}\]
we show the reverse containment. 

Consider the following computation where $\thick$, $\thick^\tns$, and $\thick^{\sqrt{\tns}}$ respectively denote the thick closure, thick tensor ideal closure, and the thick radical tensor ideal closure;
\begin{equation}\tag{$\star$} 
\thick\bigcup {\T_{\wV_i}}^c=\thick^{\tns} \bigcup {\T_{\wV_i}}^c=\thick^{\sqrt{\tns}} \bigcup {\T_{\wV_i}}^c={\T_{\wV}}^c.
\end{equation}
 The first equality holds because the thick closure of two tensor ideals is again a tensor ideal.   The second follows from rigidity; see  Lemma \ref{rigid}.  The last equality is Theorem \ref{class} (2).  

 Suppose $T$ is an object in $\bigjn_i \Ac{\Ga_{\wV_i}1}$, i.e. suppose $\Ga_{\wV_i} T=0$ for all $i$.  Then $\hm{\T}{{\T_{\wV_i}}^c}{T}=0$ for all $i$ by Theorem \ref{idempotenttriangle} (\ref{acyc}).  Equation ($\star$) implies that   $\hm{\T}{{\T_{\wV}}^c}{T}=0$.  Hence  $T\in \Ac{\Ga_\wV 1}$ by Theorem \ref{idempotenttriangle} (\ref{acyc}).  
\end{proof}

\begin{proof}[{Proof of Theorem \ref{main4} (\ref{main43}), (\ref{main43a})$\implies$(\ref{main43b})}]

 Assume $\T$ is supportive.   Suppose that  $\{\wV_i\},\{\wU_i\},\{\wV_j\},\{\wU_j\}\sbe \TT{\T^c}$ are collections of Thomason subsets, and that
\[\bigcup_{i} \wV_i\cap\wco{\wU_i}=\bigcup_{j} \wV_j\cap\wco{\wU_j}.\]
 Let $T\in \Ac{\coprod_i \Ga_{\wV_i}L_{\wU_i} 1}$. For all $i$ 
 \[\Ga_{\wV_i}L_{\wU_i} 1\tns T=\Ga_{\wV_i}L_{\wU_i}T=0.\]
 By Theorem \ref{main2} (\ref{main1c'}), this means that 
 \[\supp T\cap\left(\bigcup_i \wV_i\cap\wco{\wU_i}\right)=\emptyset.\]
   But our assumption then implies $\supp T\cap\wV_j\cap\wco{\wU_j}=\emptyset$ for all $j$, and so
 \[\Ga_{\wV_j}L_{\wU_j} 1\tns T=\Ga_{\wV_j}L_{\wU_j} T=0\]
by Theorem \ref{main2} (\ref{main1c'}). Therefore $T$ is  in the class $\Ac{\Ga_{\wV_j}L_{\wU_j} 1}$ for all $j$.  

Thus far we have shown that $\Ac{\coprod_i \Ga_{\wV_i}L_{\wU_i} 1}\sbe\Ac{\Ga_{\wV_j}L_{\wU_j} 1}$ for each $j$.  We conclude that
\[\Ac{\coprod_i \Ga_{\wV_i}L_{\wU_i} 1}\sbe\bigcap_j\Ac{ \Ga_{\wV_j}L_{\wU_j} 1}= \Ac{\coprod_j \Ga_{\wV_j}L_{\wU_j} 1}.\]
Equality follows from symmetry.   
\end{proof}

\begin{proof}[{Proof of Theorem \ref{main4} (\ref{main43}), (\ref{main43b})$\implies$(\ref{main43c})}]

Assume (\ref{main43b}).  	Define the function 
\[\eta\colon \Fr{ \skula{\hspc\T^c}}\to \Ab{\T}\quad\quad\mbox{by}\quad\quad\bigcup_{i} \wV_i\cap\wco{\wU_i}\mapsto \Ac{\coprod_i \Ga_{\wV_i}L_{\wU_i} 1}.\]
This function is well defined by  (\ref{main43b}).   Clearly, $\eta$ makes the diagram in Theorem \ref{main4} commute.

We check that $\eta$ is a frame homomorphism.   First $\eta$ preserves extrema.
 \[\eta(\spc \T^c)=\Ac{\Ga_{\spc \T^c} 1}=\Ac{1}\quad\quad\quad \eta(\emptyset)=\Ac{\Ga_{\emptyset} 1}=\Ac{0}\]
Next, we check that $\eta$ preserves finite meets.  The fourth equality is Theorem \ref{idempotenttriangle} (\ref{Intersections}). 
\begin{align*}
\eta\left(\bigcup_{i} \wV_i\cap\wco{\wU_i}\right)\mt\eta\left(\bigcup_{j} \wV_j\cap\wco{\wU_j}\right)
	&=\Ac{\coprod_i \Ga_{\wV_i}L_{\wU_i} 1}\mt \Ac{\coprod_j \Ga_{\wV_j}L_{\wU_j} 1}\\
	&=\Ac{\left(\coprod_i \Ga_{\wV_i}L_{\wU_i} 1\right)\tns\left(\coprod_j \Ga_{\wV_j}L_{\wU_j} 1\right)}\\
	&=\Ac{\coprod_i \Ga_{\wV_i}L_{\wU_i} 1 \tns \Ga_{\wV_j}L_{\wU_j} 1} \\
	&=\Ac{\coprod_i \Ga_{\wV_i\cap\wV_j}L_{\wU_i\cup \wU_j} 1} \\
	&=\eta\left(\coprod_i (\wV_i\cap\wV_j)\cap\wco{(\wU_i\cup \wU_j)}  \right) \\
	&=\eta\left(\left(\bigcup_{i} \wV_i\cap\wco{\wU_i}\right)\cap\left(\bigcup_{j} \wV_j\cap\wco{\wU_j}\right)\right)\\
\end{align*}

A similar calculation shows that $\eta$ preserves arbitrary joins.   
\end{proof}

\begin{proof}[{Proof of Theorem \ref{main4} (\ref{main43}), (\ref{main43c})$\implies$(\ref{main43a})}]

Assume (\ref{main43c}).  Every Thomason subset $\wV\in \TT{\T^c}$ satisfies $\eta(\wV)=\ga_\T(\wV)=\Ac{\Ga_\wV1}$.  And so $\eta(\wco{\wV})=\Ac{L_\wV1}$, by Lemma \ref{main41}.  Thus $\eta$ is the function defined in (\ref{main43b}).

 Suppose $\supp T=\emptyset$.  Then for every $\p\in \spc\T^c$ there exists a pair of Thomason subsets $\wV_\p,\wU_\p\in \TT{\T^c}$ with $\p\in \wV_\p\cap\wco{\wU_\p}$ such that
 \[\Ga_{\wV_\p}L_{\wU_\p} 1\tns T=\Ga_{\wV_\p}L_{\wU_\p} T=0.\]
 It follows that
 \begin{align*}
 T\in \bigcap_{\p\in\spc\T^c} \Ac{\Ga_{\wV_\p}L_{\wU_\p}1}=& \Ac{\coprod_{\p\in \spc\T^c}\Ga_{\wV_\p}L_{\wU_\p}1}\\
 &=\eta\left(\bigcup_{\p\in \spc\T^c}\wV_\p\cap\wco{\wU_\p} \right)=\eta(\spc\T^c)=\Ac{1}=\{0\}. \\
 \end{align*}
Therefore $T=0$.  
\end{proof}

\section{Topology and support}\label{assemblystuff}


\subsection{The assembly of a frame}

\begin{definition}\hfill

\begin{enumerate}
\item An element $\bx$ of a frame $\bX$ is \textit{complemented} if there exists an element $\co{\bx}\in \bX$ such that $\bx\mt\co{\bx}=0$ and $\bx\jn\co{\bx}=1$.  We call $\co{\bx}$ the \textit{complement} of $\bx$.  
\item A frame homomorphism $\al\colon\bX\to\bY$ is \textit{complemented} if $\al(\bx)$ is complemented in $\bY$ for every $\bx\in \bX$. 
\item A \textit{nucleus} of a frame $\bX$ is a function $\nu\colon \bX\to \bX$ satisfying the following.
	\begin{enumerate}[(a)]
	\item $\bx\le \nu(\bx)$ for all $\bx\in \bX$
	\item $\bx\le \by$  implies $\nu(\bx)\le \nu(\by)$ for all $\bx,\by\in \bX$
	\item $\nu\nu(\bx)=\nu(\bx)$  for all $\bx\in \bX$
	\item $\nu(\bx\mt \by)=\nu(\bx)\mt\nu(\by)$ for all $\bx,\by\in \bX$
	\end{enumerate}
\item For a frame $\bX$, let $\Neu{\bX}$ denote the set of Nuclei.  We call $\Neu{\bX}$ the \textit{assembly} of $\bX$. The assembly is partially ordered in the following manner: for nuclei $\nu,\mu\in \Neu{\bX}$, we say that $\nu\le \mu$ if $\nu(\bx)\le \mu(\bx)$ for all $\bx\in \bX$.  
\item For every element of a frame $\bx\in \bX$, let $\nu_\bx\colon  \bX\to \bX$ be the nucleus defined by $\by\mapsto \by\mt\bx$.
\end{enumerate}

\end{definition}

The assembly has many different equivalent constructions; see \cite[Chapters II,IV,VI]{PicadoPultr12} for an overview.

\begin{theorem}\label{skula}
Let $\bX$ be a frame.
\begin{enumerate}
\item The assembly $\Neu{\bX}$ is a frame.  
\item The assignment $\bx\mapsto \nu_\bx$ defines a complemented frame homomorphism $\al_\bX\colon \bX\to \Neu{\bX}$.
\item For any complemented frame homomorphism $\ph\colon \bX\to \bY$, there is a unique frame homomorphism $\tilde{\ph}\colon\Neu{\bX}\to \bY$ making the following diagram commute.
\[\xymatrix{
\bX \ar[d]^{\al_\bX} \ar[r]^\ph & \bY \\
\Neu{\bX}\ar@{.>}[ur]_{\exists! \tilde{\ph} } & \\
}\]
\end{enumerate}
\end{theorem}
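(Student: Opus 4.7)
The plan is to handle the three parts of Theorem \ref{skula} in order, leaning heavily on standard facts about assemblies in pointless topology (see Picado--Pultr, Chapter IV, as cited). Parts (1) and (2) are essentially direct verifications; part (3), the universal property, is where the interesting work sits, and where the hypothesis of complementedness is essential.

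For (1), I would show $\Neu{\bX}$ is a complete lattice and then verify the frame distributive law. Meets are computed pointwise: $\left(\bigmt_i \nu_i\right)(\bx) := \bigmt_i \nu_i(\bx)$ is immediately seen to satisfy the four nucleus axioms (monotonicity, inflationarity, idempotency, and meet-preservation all pass through pointwise meets because meets are taken in the frame $\bX$). Arbitrary joins are then defined by the adjoint prescription $\bigjn_i \nu_i := \bigmt\{\mu \in \Neu{\bX} \mid \nu_i \le \mu \text{ for all } i\}$, which exists by the meet description. The frame law $\nu \mt \bigjn_i \mu_i = \bigjn_i(\nu \mt \mu_i)$ reduces, via the pointwise meet formula, to the frame distributivity already holding in $\bX$.

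For (2), first I would confirm that $\nu_\bx$ (which I read as the closed nucleus $\by \mapsto \by \jn \bx$, since $\by \mapsto \by \mt \bx$ fails axiom (a) unless $\bx = 1$) satisfies the four nucleus axioms; each reduces to idempotency, associativity, or distributivity of join and meet. Preservation of $0$, $1$, and binary meets by $\al_\bX$ is routine. Preservation of arbitrary joins uses the universal property of the join in $\Neu{\bX}$: any nucleus dominating all $\nu_{\bx_i}$ sends $0$ above $\bigjn_i \bx_i$ and hence dominates $\nu_{\bigjn_i \bx_i}$. For complementedness, the complement of $\nu_\bx$ in $\Neu{\bX}$ is the open nucleus $\nu^{\bx}(\by) := (\bx \to \by)$, where $\to$ is the Heyting implication present in every frame; a short Heyting-algebra computation gives $\nu_\bx \mt \nu^{\bx} = \id$ and $\nu_\bx \jn \nu^{\bx} = 1_{\Neu{\bX}}$ (the terminal nucleus sending everything to $1_\bX$).

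For (3), the universal property, uniqueness comes first and forces the existence formula. Any frame homomorphism $\tilde\ph$ making the triangle commute must send $\nu_\bx \mapsto \ph(\bx)$; if in addition $\tilde\ph$ is to be a frame homomorphism, it must send the complementary open nucleus $\nu^\bx$ to $\co{\ph(\bx)}$ in $\bY$ (which exists because $\ph$ is complemented). Since every element of $\Neu{\bX}$ is a join of finite meets of closed and open nuclei (a standard generation result in the assembly), uniqueness is immediate. For existence, I would define
\[
\tilde\ph(\nu) := \bigmt_{\bx \in \bX} \bigl(\co{\ph(\bx)} \jn \ph(\nu(\bx))\bigr),
\]
i.e.\ the meet over $\bx$ of the Heyting implication $\ph(\bx) \to \ph(\nu(\bx))$ computed in $\bY$ using the complements supplied by $\ph$. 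Checking $\tilde\ph(\nu_\bx) = \ph(\bx)$ is a direct calculation; preservation of finite meets uses that $\nu(\bx\mt\by) = \nu(\bx)\mt\nu(\by)$; preservation of joins uses the description in part (1).

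The main obstacle will be verifying existence in (3): writing down a well-defined formula for $\tilde\ph(\nu)$ that lands in $\bY$, agrees with $\ph$ on closed nuclei, sends open nuclei to the complements of $\ph$-images, and is compatible with arbitrary joins in $\Neu{\bX}$. This is exactly where the complementedness hypothesis on $\ph$ is used — without it, the formula would only live in some completion of $\bY$. Once this formula is shown to be well-defined, the frame-homomorphism properties follow from the frame structure on $\bY$ combined with the pointwise and universal descriptions of meets and joins in $\Neu{\bX}$ established in (1).
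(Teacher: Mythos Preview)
The paper does not actually prove this theorem: it simply cites Johnstone, Simmons, and Picado--Pultr for all three parts. Your outline goes well beyond this and is largely sound. You correctly catch the typo in the paper's definition of $\nu_\bx$ (it must be the closed nucleus $\by \mapsto \by \jn \bx$, not $\by \mapsto \by \mt \bx$), and your strategy for part (3) --- force $\tilde\ph$ on closed and open nuclei, then use that these generate $\Neu{\bX}$ --- is the standard one.

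There is, however, a concrete error in your explicit formula for $\tilde\ph$. You write
\[
\tilde\ph(\nu) = \bigmt_{\bx \in \bX} \bigl(\co{\ph(\bx)} \jn \ph(\nu(\bx))\bigr),
\]
but since $\nu(\bx) \ge \bx$ for every nucleus, we have $\ph(\nu(\bx)) \ge \ph(\bx)$, so each term satisfies $\co{\ph(\bx)} \jn \ph(\nu(\bx)) \ge \co{\ph(\bx)} \jn \ph(\bx) = 1_\bY$. Your formula therefore sends every nucleus to $1_\bY$. The correct formula swaps meet and join:
\[
\tilde\ph(\nu) = \bigjn_{\bx \in \bX} \bigl(\co{\ph(\bx)} \mt \ph(\nu(\bx))\bigr),
\]
reflecting the decomposition $\nu = \bigjn_\bx (\nu_{\nu(\bx)} \mt \nu^{\bx})$ of an arbitrary nucleus as a join of meets of closed and open nuclei. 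With this correction the check $\tilde\ph(\nu_\ba) = \ph(\ba)$ goes through (take $\bx = 0$ for the lower bound), and the rest of your plan for (3) stands. A smaller caveat: in part (1), saying the frame law in $\Neu{\bX}$ ``reduces to frame distributivity in $\bX$'' is too quick, since joins of nuclei are not computed pointwise; the actual argument (Johnstone II.2.5) needs more care here.
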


\begin{proof}

Statements (1) and (2)  are  \cite[II.2.5 Proposition]{Johnstone82} and \cite[II.2.6 Lemma]{Johnstone82} respectively.  Statement (3) follows from the argument in \cite[II.2.9 Corollary]{Johnstone82}; see the unpublished notes \cite[Theorem 5.4]{Simmons06b} for a complete proof.  The result also follows from \cite[IV.6.3.1,II.5.3]{PicadoPultr12}.
\end{proof}


Let $\wX$ be a topological space.  There is a strong connection between the assembly and the Skula topology on $\wX$.  Recall from Definition \ref{skuladef} that $f_\wX\colon  \skula{\wX}\to \wX$ is the continuous function induced by the identity.

\begin{corollary}\label{skulacor}


There is a unique frame homomorphism $\sm_\wX$ making the following diagram commute.  
\[\xymatrix{
\Fr{\wX} \ar[d]^{\al_{\Fr{\wX}}} \ar[r]^{\Fr{\wf_\wX}} & \Fr{ \skula{\wX}} \\
\Neu{\Fr{\wX}}\ar[ur]_{\sm_\wX} & \\
}\]
\end{corollary}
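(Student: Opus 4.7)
The plan is to apply Theorem \ref{skula}(3) directly, with $\bX=\Fr{\wX}$, $\bY=\Fr{\skula{\wX}}$, and $\ph=\Fr{\wf_\wX}$. The only thing that needs checking is that $\Fr{\wf_\wX}$ is a \emph{complemented} frame homomorphism; once that is in hand, Theorem \ref{skula}(3) supplies both the existence and uniqueness of $\sm_\wX$ making the diagram commute.

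First I would unpack what $\Fr{\wf_\wX}$ does. Since $\wf_\wX\colon \skula{\wX}\to\wX$ is the set-theoretic identity, $\Fr{\wf_\wX}(\wU)={\wf_\wX}^{-1}(\wU) = \wU$, viewed now as an open subset of $\skula{\wX}$. So the map sends an open set of $\wX$ to the same underlying set, reinterpreted in the Skula topology.

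Next I would verify complementedness using the defining property of the Skula topology (Definition \ref{skuladef}): the Skula topology is the weakest topology on $\wX$ in which every open set of $\wX$ is simultaneously open and closed. Consequently, for every $\wU\in\Fr{\wX}$, the set $\wU$ is clopen in $\skula{\wX}$, so its set-theoretic complement $\co{\wU}=\wX\setminus\wU$ lies in $\Fr{\skula{\wX}}$. In $\Fr{\skula{\wX}}$ we then have
\[
\Fr{\wf_\wX}(\wU)\mt\co{\wU}=\wU\cap\co{\wU}=\emptyset=0,\qquad \Fr{\wf_\wX}(\wU)\jn\co{\wU}=\wU\cup\co{\wU}=\wX=1,
\]
so $\co{\wU}$ is a complement of $\Fr{\wf_\wX}(\wU)$ in $\Fr{\skula{\wX}}$. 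Thus $\Fr{\wf_\wX}$ is complemented.

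Finally, applying Theorem \ref{skula}(3) to the complemented frame homomorphism $\Fr{\wf_\wX}\colon \Fr{\wX}\to\Fr{\skula{\wX}}$ yields a unique frame homomorphism $\sm_\wX\colon \Neu{\Fr{\wX}}\to \Fr{\skula{\wX}}$ with $\sm_\wX\circ\al_{\Fr{\wX}}=\Fr{\wf_\wX}$, which is exactly the asserted commutative triangle. There is no real obstacle here; the only content is the observation that complementedness of $\Fr{\wf_\wX}$ is built into the definition of the Skula topology, after which the universal property of the assembly does all the work.
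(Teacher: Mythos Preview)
Your proposal is correct and follows exactly the same approach as the paper: you verify that $\Fr{\wf_\wX}$ is complemented using the defining property of the Skula topology, then invoke Theorem \ref{skula}(3). The paper's own proof is just a terser version of what you wrote.
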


\begin{proof}

For every open set $\wV\sbe \wX$, the elements $\wV$ and $\wco{\wV}$ of the frame $\Fr{ \skula{\wX}} $ are complements.  Thus the result follows from Theorem \ref{skula} (3).  
\end{proof}

For a continuous map $\wg\colon \wX\to \wY$, the frame map $\Fr{\wg}$ is complemented if and only if for every open set $\wV\sbe \wY$ the preimage  $\wg^{-1}(\wV)$  is clopen in $\wX$.   Arguing as  in Corollary \ref{main4cor}, the function $\wf_\wX$ is universal amongst such ``complemented'' continuous functions whose domain is $\wX$. In this sense, the assembly of a frame is the pointless analogy of the Skula topology.  However, the universal property in Theorem \ref{skula} makes the assembly a much more versatile object, as we will see in the next section.

Before we apply our results to tensor triangulated categories, we discuss when the assembly and $\Fr{ \skula{\wX}}$ coincide.  

\begin{definition}\ 
\begin{enumerate}
\item For a space $\wX$  and a subspace $\wS\sbe \wX$, a point $\p\in \wS$ is  {\it weakly isolated} if there exists an open set $\wV\sbe \wX$ such that 
\[\p\in\wV\cap\wS\sbe \bar{\p}.\]
\item Let $\bX$ be a frame.  
	\begin{enumerate}[(i)]
	\item A prime $\bp\in \bX$ is \textit{minimal} over $\bx$ if it is minimal amongst primes containing $\bx$.  Let $\min(\bx)$ denote the the minimal primes of $\bx$.  
	\item A prime $\bp\in \min(\bx)$ is \textit{essential} over $\bx$ if 
	\[\bx=\bigmt_{\bq\in \min(\bx)} \bq\quad\quad\mbox{and} \quad\quad \bx\ne\bigmt_{\substack{\bq\in \min(\bx)\\ \bq\ne\bp}}\bq\]
	\end{enumerate}
\end{enumerate}
\end{definition}

\begin{theorem}\label{wscdef}
The following is equivalent for a topological space $\wX$.
\begin{enumerate}
\item\label{wscdefa} The map $\sm_\wX$ is an isomorphism of frames.  
\item\label{wscdefb} Every nonempty closed set has a weakly isolated point.
\item\label{wscdefc} Every closed set is the closure of its weakly isolated points.
\item\label{wscdefd} Every open set $\wV\sbne\wX$ has an essential prime in the frame $\Fr{\wX}$.
\item\label{wscdefe} Every element in the frame   $\Fr{\wX}$ is the meet of its essential primes.
\item\label{wscdefg}  For every surjective frame homomorphism $\Fr{\wX}\to \bY$, the frame $\bY$ has enough points.
\end{enumerate}
\end{theorem}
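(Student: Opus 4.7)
The plan is to divide the six conditions into three clusters---the topological pair (2) and (3), the frame-theoretic pair (4) and (5), and the assembly-theoretic pair (1) and (6)---prove each pair internally equivalent, then bridge the topological and frame clusters by a point-prime dictionary, and finally tie the assembly cluster in through the universal property of the assembly from Theorem \ref{skula}.

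For (2) $\Leftrightarrow$ (3), I would run a transfinite Cantor--Bendixson construction: for a closed set $\wC$, set $\wC_0 = \wC$ and inductively take $\wC_{\alpha+1}$ to be the closure in $\wC$ of the non-weakly-isolated points of $\wC_\alpha$, with intersections at limit ordinals. Condition (2) forces this chain to reach $\emptyset$, which immediately delivers (3); the reverse direction is trivial. A parallel transfinite construction inside the frame $\Fr{\wX}$, iteratively stripping off joins over essential primes from a given open $\wV \sbne \wX$, gives (4) $\Leftrightarrow$ (5).

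The bridge (2) $\Leftrightarrow$ (4) goes through the correspondence $\p \leftrightarrow \bp := \wX \setminus \overline{\{\p\}}$, reducing to sober $\wX$ since (4) depends only on $\Fr{\wX}$. Under this correspondence the specialisation order on $\wX$ matches the reversed order on primes, and points minimal in $\wC = \wX \setminus \wV$ match primes minimal over $\wV$. The crux is the claim that $\p$ is weakly isolated in $\wC$ if and only if $\bp$ is essential over $\wV$. Forwards: a witnessing open $\wW$ with $\p \in \wW \cap \wC \sbe \overline{\{\p\}}$ forces $\p$ to be minimal in $\wC$ (any strict generalisation $\q < \p$ with $\q \in \wC$ would satisfy $\q \in \wW \cap \wC \sbe \overline{\{\p\}}$, contradicting $\q < \p$), and the points of $\wW \cap \wC$ are then covered by $\overline{\{\p\}}$ alone among the minimal closures, so removing $\bp$ strictly shrinks $\bigcup_{\p' \text{ min in } \wC} \overline{\{\p'\}}$. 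Backwards: a specialisation of $\p$ lying on no $\overline{\{\p'\}}$ for another minimal $\p'$ in $\wC$ yields the open neighbourhood required by weak isolation.

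For (1) $\Leftrightarrow$ (6), I would apply the universal property of the assembly. Because every frame map $\Fr{\wX} \to \mathbf{2}$ is automatically complemented, the universal property gives a bijection $\spc \Neu{\Fr{\wX}} \cong \spc \Fr{\wX}$; unwinding the topology on the left shows it is the Skula topology on $\spc \Fr{\wX}$, and under these identifications $\sm_\wX$ becomes the spatialisation map $\ld_{\Neu{\Fr{\wX}}}$. Since $\Fr{\skula{\wX}}$ is automatically spatial, (1) is equivalent to $\Neu{\Fr{\wX}}$ being spatial; and by standard pointless topology this is in turn equivalent to every sublocale of $\wX$ being spatial, which is precisely (6) via the bijection between nuclei and surjective frame maps out of $\Fr{\wX}$. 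To close the cycle I would link (5) and (6) through the bridge of the previous step: essential primes over an open $\wV$ correspond to the spatial points of the closed sublocale complementary to $\wV$, so (5) supplies enough points to every closed sublocale, and a further transfinite argument inside $\Neu{\Fr{\wX}}$ extends this to arbitrary sublocales. The main obstacle will be the bridging step (2) $\Leftrightarrow$ (4), where ``essential'' must be carefully distinguished from merely ``minimal''---producing the witnessing open from essentiality, and essentiality from weak isolation, both rest on delicate handling of the specialisation order. A secondary difficulty is the closing extension from closed to arbitrary sublocales, which will itself require iterating the Cantor--Bendixson-style argument inside the assembly.
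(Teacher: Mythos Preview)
The paper does not actually prove this theorem: its entire proof consists of two citations, to Niefield--Rosenthal for the equivalence of (1), (2), (4), (5), (6), and to Simmons for (1) $\Leftrightarrow$ (3). Your proposal therefore does far more than the paper---you are reconstructing the content of those references from scratch. Your clustering into three pairs with bridges between them is the right architecture and is essentially how the cited papers proceed.

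One concrete gap worth flagging: in your bridge (2) $\Rightarrow$ (4), the paper's definition of ``essential prime over $\wV$'' has \emph{two} clauses---first that $\wV$ equals the meet of all its minimal primes, second that removing $\bp$ strictly enlarges that meet---and your forward argument only verifies the second. Translated to points, the first clause says that the minimal points of $\wC = \wX\setminus\wV$ are dense in $\wC$; this does not follow from the mere existence of one weakly isolated point. You need to invoke (3), which you have already established, to supply it. The backward direction has the dual issue: the witnessing open for weak isolation should be taken as $\wW = \bigmt_{\bq\ne\bp}\bq$, and showing $\wW\cap\wC \sbe \overline{\{\p\}}$ again uses density of minimal points, i.e.\ the first clause.

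The sketchiest step is your closing extension from closed sublocales to arbitrary sublocales in linking (5) to (6). This is where the nontrivial frame theory lives, and ``a further transfinite argument inside $\Neu{\Fr{\wX}}$'' is carrying real weight; making it precise requires either the assembly-tower machinery or Simmons's direct argument via the front topology. Since the paper cites out at exactly this point, you are not missing anything the paper provides---but be aware that this is the hard part of the theorem.
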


\begin{definition}
Any space satisfying the equivalent conditions above is called \textit{weakly scattered}.
\end{definition}

\begin{remark}\ 
\begin{enumerate}
\item When $\wX$ is $T_0$, it is easy to check that  (\ref{wscdefc}) is equivalent to the following: every nonempty closed set $\wU\sbe \wX$ is the closure of some discrete subspace $\wS\sbe \wX$.
\item When $\bX$ is sober, it is easy to check that (\ref{wscdefe})  equivalent to the following: every surjective frame homomorphism $\Fr{\wX}\to \bY$ is induced by a frame isomorphism $\Fr{\wS}\cong\bY$ for some subspace $\wS\sbe \bY$.  See \cite[VI.2.2.1]{PicadoPultr12}
\end{enumerate}
\end{remark}

\begin{proof}[{Proof of Theorem \ref{wscdef}}]

 In \cite[Theorem 3.4, Corollary 3.7]{NiefieldRosenthal87}, it is shown that (\ref{wscdefa}), (\ref{wscdefb}), (\ref{wscdefd}), (\ref{wscdefe}), and (\ref{wscdefg}) are equivalent.  In  \cite[Theorem 4.4]{Simmons80}, it is shown that (\ref{wscdefa}) and (\ref{wscdefc}) are equivalent. 

\end{proof}

\subsection{Support and the assembly}\label{wss}

We now arrive to the point of our pointless machinery.  Let $\T$ be a rigidly compactly generated tensor triangulated category. Recall the frame homomorphism $\ga_\T\colon \TT{\T^c}\to \Ab{\T}$ from Theorem \ref{main3}.  By Lemma \ref{main41}, $\ga_\T$ is complemented.  Therefore, Theorem \ref{skula} (3) implies that there is a unique frame homomorphism $\tilde{\ga}_\T$ making the following  diagram commute.
\[\xymatrix{
\TT{\T^c} \ar[d]_{\al_{\bT(\T^c)}} \ar[r]^{\ga_\T} & \Ab{\T} \\
\Neu{\TT{\T^c}}\ar@{.>}[ur]_{ \tilde{\ga}_\T}& \\
}\]
By Theorem \ref{main4},  $\T$ is supportive if and only if there exists a frame homomorphism $\eta$ making the following diagram commute.
\[\xymatrix{
	\TT{\T^c} \ar[dd]_{\al_{\bT(\T^c)}} \ar[dr]_{\bF\wf_{\hspc\T^c}}\ar[drrr]^{\ga_\T} & & & \\
						&  \Fr{ \lspc \T} \ar@{.>}[rr]^\eta && \Ab{\T} 	\\
	\Neu{\TT{\T^c}}\ar[ur]^{\sm_{\hspc\T^c}}\ar[urrr]_{ \tilde{\ga}_\T}& & &\\
}\]

%
%

By Theorem \ref{wscdef},$\sm_{\TT{\T^c}}$ is an isomorphism  if and only if   the Hochster  dual of $\spc \T^c$ is weakly scattered, motivating the following definition.

\begin{definition}
A spectral space $\wX$ is \textit{Hochster weakly scattered} if its Hochster dual $\wX^\da$ is weakly scattered.  By Example \ref{ZPThomason}, $\wX$ is Hochster weakly scattered if and only if for every Thomason subset $\wU\sbe\wX$, there exists a point $\p\notin \wU$ and a Thomason subset $\wV\sbe  \wX$ such that
\[\p\in \wV\cap \wco{\wU}\sbe \downarrow \p.\]
\end{definition}

 We have now proven the following result.  

\begin{theorem}\label{main5}

A  rigidly compactly generated tensor triangulated category $\T$ is supportive if $\spc \T^c$ is Hochster weakly scattered.
\end{theorem}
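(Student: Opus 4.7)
The plan is to invoke the large commutative diagram displayed just before the statement of Theorem \ref{main5}, which already assembles all the necessary pieces. That diagram tells us we have a complemented frame homomorphism $\ga_\T\colon \TT{\T^c}\to\Ab{\T}$ factoring through the assembly as $\ga_\T = \tilde{\ga}_\T\circ \al_{\TT{\T^c}}$, and also factoring through $\Fr{\lspc\T}$ in a way that involves the map $\sm_{\hspc\T^c}\colon \Neu{\TT{\T^c}}\to \Fr{\lspc\T}$ coming from Corollary \ref{skulacor}. The strategy is to produce the frame homomorphism $\eta$ required by Theorem \ref{main4} (\ref{main43c}) by inverting $\sm_{\hspc\T^c}$.

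First, I would observe that by Theorem \ref{wscdef}, the hypothesis that $\spc\T^c$ is Hochster weakly scattered is precisely the statement that the Hochster dual $\hspc\T^c$ is weakly scattered, hence that $\sm_{\hspc\T^c}$ is an isomorphism of frames. Define
\[\eta:=\tilde{\ga}_\T\circ \sm_{\hspc\T^c}^{-1}\colon \Fr{\lspc\T}\to \Ab{\T}.\]
This is a composition of frame homomorphisms, hence a frame homomorphism.

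Next I would verify that $\eta$ sits in the correct commutative triangle. By Corollary \ref{skulacor}, $\sm_{\hspc\T^c}\circ\al_{\TT{\T^c}}=\bF\wf_{\hspc\T^c}$, while by the construction of $\tilde{\ga}_\T$ via Theorem \ref{skula} (3) we have $\tilde{\ga}_\T\circ\al_{\TT{\T^c}}=\ga_\T$. Chasing the diagram then gives
\[\eta\circ \bF\wf_{\hspc\T^c}=\tilde{\ga}_\T\circ\sm_{\hspc\T^c}^{-1}\circ\sm_{\hspc\T^c}\circ\al_{\TT{\T^c}}=\tilde{\ga}_\T\circ\al_{\TT{\T^c}}=\ga_\T,\]
so $\eta$ completes precisely the commutative diagram in Theorem \ref{main4} (\ref{main43c}). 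Applying the equivalence (\ref{main43a}) $\Leftrightarrow$ (\ref{main43c}) of that theorem, we conclude that $\T$ is supportive.

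There is essentially no main obstacle: the heavy lifting has already been done by Theorem \ref{main4}, which reduces supportiveness to the existence of $\eta$, and by Theorem \ref{wscdef}, which identifies Hochster weakly scattered with invertibility of $\sm_{\hspc\T^c}$. The only thing to watch carefully is the direction of the arrows and the distinction between $\Fr{\lspc\T}=\Fr{\skula{\hspc\T^c}}$ and $\Neu{\TT{\T^c}}$, but since $\sm_{\hspc\T^c}$ is assumed an isomorphism, inverting it causes no trouble.
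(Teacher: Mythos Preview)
Your proposal is correct and follows exactly the argument the paper gives: the discussion preceding the theorem assembles the diagram, Theorem \ref{wscdef} identifies the Hochster weakly scattered hypothesis with invertibility of $\sm_{\hspc\T^c}$, and then $\eta=\tilde{\ga}_\T\circ\sm_{\hspc\T^c}^{-1}$ furnishes the map required by Theorem \ref{main4} (\ref{main43c}). There is nothing to add.
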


We now try to understand which spectral spaces are Hochster weakly scattered.  First, we consider the dual question: when is a spectral space weakly scattered?

\begin{definition}

Let $R$ be a commutative ring and $I\sbe R$ and ideal.  A prime $\p\in\spec R$ is an essential divisor of $I$ if $\p$ is an essential prime of $\sqrt{I}$ in the frame of radical ideals. This means that $\p\in \min I$ but
\[\sqrt{I}\ne \bigcap_{\substack{\p\in \min(I)\\ \p\ne \q }} \q\]

\end{definition}

\begin{lemma}
The following are equivalent for a commutative ring $R$.  
\begin{enumerate}
\item $\spec R$ is weakly scattered.
\item Every proper ideal has an essential prime divisor.
\item\label{asdf} For every proper radical ideal $I\sbe R$, there is a prime $\p$ such that 
\[(I:(I:\p))=\p.\]
\item Every radical ideal can be written as an irredundant intersection of primes.  
\end{enumerate}
In particular, a spectral space $\wX$ is Hochster weakly scattered if and only if\break $\wX^\da\cong \spec R$ for some commutative ring $R$ satisfying these hypotheses.  
\end{lemma}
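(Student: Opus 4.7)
The plan is to translate conditions (1), (2), (4) to the frame-theoretic statements of Theorem \ref{wscdef} applied to $\wX = \spec R$, and then to establish (2) $\iff$ (3) via a direct colon-ideal computation. First I would identify $\Fr{\spec R}$ with the frame of radical ideals of $R$ via $I \mapsto D(I) = \{\p : I \nsubseteq \p\}$, an order-preserving bijection sending intersection to intersection. Under this dictionary, the meet-irreducible elements of the frame are precisely the prime ideals of $R$, and for a radical ideal $I$ the frame-theoretic set $\min(I)$ coincides with the set of minimal prime divisors. Since $I = \bigcap_{\q \in \min(I)} \q$ automatically holds for radical $I$, the frame notion of essential prime over $I$ reduces exactly to the stated definition of essential prime divisor.

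With this dictionary, (1) $\iff$ (2) is Theorem \ref{wscdef} (\ref{wscdefa}) $\iff$ (\ref{wscdefd}), after noting that having an essential prime divisor depends only on $\sqrt{I}$ so that one may pass freely between proper ideals and proper radical ideals. For (1) $\iff$ (4), I would invoke Theorem \ref{wscdef} (\ref{wscdefe}) after verifying that any irredundant representation $I = \bigcap_{\p \in S} \p$ of a radical ideal forces $S = \min(I)$ with each $\p \in S$ essential. Indeed, if $\p \in S$ were not minimal, a minimal prime $\q_0 \subsetneq \p$ would satisfy $\sqrt{I} \subseteq \q_0$, hence $\p' \subseteq \q_0$ for some $\p' \in S$ by primality, and the witness to irredundancy at $\p$ would contradict the minimality of $\q_0$. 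Conversely, intersecting only over essential primes is automatically irredundant by the definition.

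For (2) $\iff$ (3), I would compute, for radical $I$ and prime $\p$,
\[ (I:\p) = \bigcap_{\q \in \min(I),\ \p \nsubseteq \q} \q, \]
using that $y\p \subseteq \q$ is automatic whenever $\p \subseteq \q$ and otherwise amounts to $y \in \q$. The inclusion $\p \subseteq (I:(I:\p))$ is immediate, and for $x \notin \p$ the condition $x(I:\p) \subseteq I$ reduces by primality to $(I:\p) \subseteq \p$; hence $(I:(I:\p)) = \p$ iff $(I:\p) \nsubseteq \p$. A case analysis then resolves the equivalence: if $I \nsubseteq \p$ then $I \subseteq (I:(I:\p)) = \p$, a contradiction; if $\p \supseteq I$ is non-minimal, the formula collapses to $(I:\p) = \sqrt{I} \subseteq \p$ so that $(I:(I:\p)) = R \ne \p$; and if $\p \in \min(I)$, then $(I:\p) = \bigcap_{\q \ne \p} \q$ and $(I:\p) \nsubseteq \p$ is precisely the essential condition. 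Thus $(I:(I:\p)) = \p$ singles out exactly the essential prime divisors of $I$, so (3) is equivalent to (2).

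The final statement then follows from Hochster's theorem---every spectral space has the form $\spec R$---combined with the definition that $\wX$ is Hochster weakly scattered iff $\wX^\da$ is weakly scattered. The main obstacle is the algebraic bookkeeping in (2) $\iff$ (3): three cases must be handled (primes not dividing $I$, non-minimal divisors, and minimal divisors), and the cleanest presentation relies on first isolating the identity $(I:(I:\p)) = \p \Leftrightarrow (I:\p) \nsubseteq \p$ before doing the case work.
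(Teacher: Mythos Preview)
Your approach is essentially the paper's proof unpacked: the paper simply cites \cite{Kirby69} for the fact that a prime satisfying $(I:(I:\p))=\p$ is an essential divisor, and \cite{NiefieldRosenthal87} for the remaining equivalences, whereas you route these through Theorem~\ref{wscdef} and a direct colon-ideal computation. Your treatment of (1)$\iff$(2) and of (2)$\iff$(3) is correct; the identification $(I:(I:\p))=\p \Leftrightarrow (I:\p)\nsubseteq\p$ should be stated under the hypothesis $I\subseteq\p$, but you recover the case $I\nsubseteq\p$ separately, so no harm is done.

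There is, however, a genuine error in your (1)$\iff$(4) argument. The claim that an irredundant representation $I=\bigcap_{S}\p$ forces $S=\min(I)$ is false: take $R=\prod_{\bN}k$ and $I=(0)=\bigcap_{i}\p_i$ with $\p_i$ the kernel of the $i$th projection; this is irredundant, yet $\min(0)=\spec R$ contains many non-principal ultrafilter primes not in $S$. Your justification ``$\p'\subseteq\q_0$ for some $\p'\in S$ by primality'' is also invalid for infinite $S$. The correct statement is that $S$ equals the set of \emph{essential} primes of $I$, and the primality argument must be applied differently. For $S\subseteq\min(I)$: if $\p_0\in S$ were not minimal, choose $\q_0\subsetneq\p_0$ minimal over $I$; then $\bigl(\bigcap_{\p\ne\p_0}\p\bigr)\cap\p_0=I\subseteq\q_0$, and since $\p_0\nsubseteq\q_0$, primality of $\q_0$ gives $\bigcap_{\p\ne\p_0}\p\subseteq\q_0\subseteq\p_0$, contradicting irredundancy. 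For $\p_0\in S$ essential: the witness $x_0\in\bigcap_{\p\ne\p_0}\p\setminus\p_0$ satisfies $x_0\p_0\subseteq I\subseteq\q$ for every $\q\in\min(I)$, hence $x_0\in\q$ whenever $\q\ne\p_0$; thus $x_0$ also witnesses essentiality. Finally, any essential prime $\q_0\notin S$ would give $\bigcap_{\q\ne\q_0}\q\subseteq\bigcap_S\p=I\subseteq\q_0$, contradicting essentiality. With $S$ equal to the essential primes, (4)$\Rightarrow$(\ref{wscdefe}) follows.
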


\begin{proof}

By  \cite[Lemma 1]{Kirby69}, any  prime ideal satisfying the conclusion of (\ref{asdf}) is an essential prime divisor of $I$.  The rest of the result is \cite[Theorem 4.1]{NiefieldRosenthal87}.  

\end{proof}

The following examples demonstrate Hochster weakly spectral spaces are tricky. 

Even for a relatively nice non-Noetherian ring $R$, $\spec R$ need not be Hochster weakly scattered.

\begin{example}
Every Noetherian spectral space $\wX$ is weakly scattered.  Indeed, for such a space, every closed set $\wV\sbe\wX$ can be written as a union 
\[\wV=\wV_1\cup\cdots\cup \wV_n\]
  of irreducible closed sets $\wV_i$.  The generic point of each $\wV_i$ is weakly isolated in $\wV$.  

Consider the Hochster dual $\wY=\wX^\da$.   The space $\wY$ is Hochster weakly scattered, since $\wY^\da=\wX$.     Spaces of this form are rather bizarre.  For instance
\begin{itemize}
\item every specialisation closed subset in $\wY$ is closed
\item the set of  quasi-compact open sets of $\wY$ satisfies DCC
\item every quasi-compact open sets of $\wY$  is a finite union irreducible open sets.   
\end{itemize} 
\end{example}

\begin{example}\label{nonexample}

Let $R= k[x_1,x_2,\cdots]$ with $k$ algebraically closed.  Then $\spec R$ is not Hochster weakly scattered. 
\begin{proof}
We claim that ${(\spec R)}^\da$ itself has no weakly isolated points.  Let $\p\in \spec R$ and $\wV\sbe \spec R$ a Thomason subset with $\p\in \wV$. We claim that $\wV$ is not contained in $\bar{\p}=\downarrow \p$, i.e.\ we must produce a prime $\q\nsubseteq \p$ which is in $\wV$.  We may assume that $\wV=\wV(f_1,\dots,f_s)$ with $f_1,\dots,f_s\in R$.  Each $f_i$ is a polynomial in a finite set of variables, and so there is an $n$ such that each $f_i$ is in $S=k[x_1,\dots,x_n]$.  Let $\mm\sbe S$ be a maximal ideal containing the $f_i$.   By the Nullstellsatz, after a suitable coordinate change, we may write $\mm=(x_1,\dots,x_n)$.  Now either $x_{n+1}$ or $x_{n+1}+1$ is not in the ideal $\p$.  After again changing coordinates, we may assume $x_{n+1}\notin \p$.  The ideal  $\q=(x_1,\dots,x_n,x_{n+1})\sbe R$ is our desired prime.  
\end{proof}
\end{example}

\subsection{Hochster scattered spaces}

In order to relate our theory to the work of Balmer, Favi, and Stevenson, we discuss a special class of weakly scattered spaces. 

\begin{definition}
A space is {\it scattered} if every closed subset has an \textit{isolated} point which is a point which is open in the subspace topology.  A space is {\it Hochster scattered} if it is spectral and the Hochster dual is scattered.

\end{definition}

Scattered space first arose in connection with the following famous construction.  

\begin{con}[Cantor-Bendixon]
Let $\wX$ be a topological space. A point in a subspace $\p\in\wY\sbe  \wX$ is {\it isolated}  in $\wY$ if it is open in the subspace topology. Equivalently,  there is an open set $\wV\sbe \wX$ such that $\{\p\}=\wY\cap\wV$.  Note that isolated points are weakly isolated.  Let $\wI(\wY)$ denote the isolated points of $\wY$. Note that this is an open subspace of $\wY$.  

Inductively, for every ordinal $\al$, we will define an open subspace $\wX_{\le \al}$ and  a closed subspace $\wX_{>\al}$.  Set $X_{\le 0}=\emptyset$ and $\wX_{>0}=\wX$.  Assuming these sets are defined for $\al$, set $\wX_{\le \al +1}=\wX_{\le \al} \cup \wI(\wX_{>\al})$ and let  $\wX_{>\al+1}$ be its complement.  These spaces are open and closed respectively.  When $\al$ is a limit ordinal, set
\[\wX_{\le \al}=\bigcup_{\be<\al} \bX_{\le \be}\]
and let $\wX_{>\al+1}$ be its complement.  These spaces are open and closed respectively.

The {\it Cantor-Bendixon rank} of $\wX$ is then the smallest ordinal  $\al$ such that\break $\wX=\wX_{\le \al}$.  If no such ordinal exists, we say that  $\wX$ has no Cantor-Bendixon rank.
\end{con}

Recall the definition of a visible point from Subsection \ref{vis}.

\begin{lemma}\label{scat}
The following are equivalent for a $\sT_0$ space $\wX$.
\begin{enumerate}
\item $\wX$ is scattered
\item $\wX$ is weakly scattered and $\sT_{\frac{1}{2}}$
\item $\wX$ has Cantor-Bendixon rank
\item The assembly $\Neu{\Fr{\wX}}$ is a Boolean algebra
\end{enumerate}
If a space is Hochster scattered, then all of its points are visible. 
\end{lemma}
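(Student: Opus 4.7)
The strategy is to establish (1) $\iff$ (3) by a direct Cantor-Bendixon argument, (1) $\iff$ (2) via a short $\sT_0$ observation, defer (1) $\iff$ (4) to the pointless-topology literature, and then deduce the visibility claim as a corollary.

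For (3) $\Rightarrow$ (1), given a nonempty closed $\wY \sbe \wX$ I would let $\al$ be the smallest ordinal with $\wY \cap \wX_{\le \al} \ne \emptyset$. A quick check rules out $\al$ being a limit, so $\al = \be+1$ with $\wY \sbe \wX_{>\be}$; any $\p \in \wY \cap \wI(\wX_{>\be})$ then has an open neighbourhood $\wV$ with $\wV \cap \wX_{>\be} = \{\p\}$, which forces $\wV \cap \wY = \{\p\}$. For (1) $\Rightarrow$ (3), scatteredness guarantees $\wI(\wX_{>\al}) \ne \emptyset$ whenever $\wX_{>\al} \ne \emptyset$, so the chain of open sets $\wX_{\le \al}$ strictly increases until it exhausts $\wX$, which must occur by a cardinality bound on the distinct open subsets of $\wX$.

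For (1) $\iff$ (2), the key observation, which I would establish first, is that in any $\sT_0$ space an isolated point of $\bar\p$ must equal $\p$ itself: if $\{\q\} = \wV \cap \bar\p$ and $\p \notin \wV$, then minimality of $\bar\p$ as the smallest closed set containing $\p$ forces $\bar\p \sbe \wco{\wV}$, contradicting $\q \in \wV \cap \bar\p$. Granted this, (1) applied to $\bar\p$ gives $\{\p\} = \wV \cap \bar\p$ for some open $\wV$, so $\{\p\}$ is locally closed and $\wX$ is $\sT_{\frac{1}{2}}$, while weak scatteredness is automatic. Conversely, given (2), let $\wY$ be nonempty closed with weakly isolated point $\p$, i.e.\ $\p \in \wV \cap \wY \sbe \bar\p$ for some open $\wV$; then $\sT_{\frac{1}{2}}$ makes $\wC = \bar\p \setminus \{\p\}$ closed, and a direct computation using $\wV \cap \wY \sbe \bar\p$ shows that $\wV \cap \wco{\wC} \cap \wY = \{\p\}$, exhibiting $\p$ as isolated in $\wY$.

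For (1) $\iff$ (4) I would invoke the classical theorem of Simmons identifying topologically scattered $\sT_0$ spaces with those whose frame has Boolean assembly; see for instance the results surveyed in \cite[VI.3]{PicadoPultr12}. This is the main obstacle, since it is the only step relying on nontrivial pointless-topological machinery, and I expect the bulk of the reference work to be unwinding definitions so that ``scattered'' for the space $\wX$ matches ``scattered'' for its frame. Finally, for the visibility claim: if $\wX$ is Hochster scattered then $\wX^\da$ is scattered and $\sT_0$, so the closed set $\downarrow \p = \overline{\{\p\}}^{\wX^\da}$ has $\p$ as its unique isolated point by the $\sT_0$ observation above. Hence $\{\p\} = \wD \cap \downarrow \p$ for some $\wX^\da$-open set $\wD$, which is exactly a Thomason subset of $\wX$; since $\downarrow \p = \wco{\Ze{\p}}$ with $\Ze{\p}$ Thomason by Example \ref{ZPThomason}, we obtain $\{\p\} = \wD \cap \wco{\Ze{\p}}$, which exhibits $\p$ as visible.
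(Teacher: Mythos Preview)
Your proposal is correct and follows essentially the same architecture as the paper's proof: both establish the equivalences with (1) as a hub, both defer (1)$\iff$(4) to Simmons's theorem, and both treat the visibility claim as a short corollary. The paper is simply more terse---it cites \cite[Proposition VI.8.1.1]{PicadoPultr12} for (1)$\iff$(2) and dismisses (1)$\iff$(3) as an elementary exercise, whereas you spell these out---and for visibility the paper takes the one-line route through (1)$\Rightarrow$(2) and Lemma~\ref{Noeth} (Hochster scattered $\Rightarrow$ dual is $\sT_{\frac{1}{2}}$ $\Rightarrow$ all points visible) rather than your explicit construction of the Thomason witnesses, but the content is the same.
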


\begin{proof}

Statements (1) and (2) are equivalent by \cite[Proposition VI.8.1.1]{PicadoPultr12}.  The equivalence of (1) and (3) is an elementary exercise in topology.  Statements (1) and (4) are equivalent by \cite[Theorem 9]{Simmons78}.  By Lemma \ref{Noeth}, if the Hochster dual of a spectral space is $\sT_{\frac{1}{2}}$, then all the points are visible, proving the last statement.  
\end{proof}

The following examples of Hochster scattered spaces were inspired by Stevenson's work in \cite{Stevenson17}.  Recall that the constructible topology of a spectral space $\wX$ is generated by the Hochster dual topology and  the Zariski topology (the given topology on $\wX$).  

\begin{lemma}
A spectral space $\wX$ is  Hochster scattered if one of the following hold.
\begin{enumerate}
\item $\wX$ is Noetherian.
\item $\wX$ carries the constructible topology and has Cantor-Bendixon rank.
\end{enumerate}
\end{lemma}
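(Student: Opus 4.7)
The plan is to unpack the condition ``Hochster scattered'' as ``the Hochster dual $\wX^\da$ is scattered'', which by definition means every nonempty closed subset $\wY \sbe \wX^\da$ contains a point that is open in the subspace topology. In each of the two cases I will exhibit, for an arbitrary nonempty closed $\wY \sbe \wX^\da$, a Thomason subset $\wV \sbe \wX$ witnessing the isolation of some $\p \in \wY$, i.e.\ $\wV \cap \wY = \{\p\}$.

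For (1), first observe that the closed subsets of $\wX^\da$ are precisely the complements of Thomason subsets of $\wX$, and in a Noetherian spectral space Thomason subsets coincide with specialisation-closed subsets, so these closed subsets of $\wX^\da$ are exactly the subsets of $\wX$ closed under generalisation. Given a nonempty such $\wY$, I would pick $\p \in \wY$ maximal under the specialisation order $\sbe$; such an element exists because chains in $\wY$ under $\sbe$ correspond to ascending chains of irreducible closed subsets $\overline{\{\p_i\}}$ of $\wX$, which stabilise by Noetherianity, whence Zorn's lemma applies. The candidate Thomason set is $\wV = \overline{\{\p\}}$: its complement is open, and every open set in a Noetherian space is quasi-compact, so $\overline{\{\p\}}$ is the complement of a quasi-compact open, hence Thomason. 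Finally $\overline{\{\p\}} \cap \wY = \{\q \in \wY \mid \p \sbe \q\} = \{\p\}$ by maximality of $\p$, as required.

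For (2), the argument is essentially immediate once one identifies the Hochster dual. If $\wX$ carries the constructible topology, then $\wX$ is a Stone space (compact, Hausdorff, totally disconnected); in any Stone space the quasi-compact opens are precisely the clopen sets, and every closed set is an intersection of clopens. Hence the closed subsets of $\wX^\da$, which are intersections of finite unions of quasi-compact opens of $\wX$, coincide with the closed subsets of $\wX$, so $\wX^\da = \wX$ as topological spaces. Scatteredness of $\wX^\da$ then becomes scatteredness of $\wX$, and this follows from the assumed Cantor-Bendixon rank via the equivalence of (1) and (3) in Lemma \ref{scat}.

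The main obstacle I expect is the verification of $\wX^\da = \wX$ in (2), which rests on the standard but non-trivial facts about Stone spaces that quasi-compact opens are clopens and every closed set is an intersection of clopens. For (1), the only subtle point is ensuring the existence of a maximal element of $\wY$ under $\sbe$; everything else is a bookkeeping exercise translating between the Zariski and Hochster dual topologies.
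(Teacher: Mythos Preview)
Your proposal is correct and matches the paper's approach in spirit. For (1) you pick a $\sbe$-maximal point of the Hochster-closed $\wY$ and isolate it via its Zariski closure, which is exactly the paper's ``closed points are isolated in the Hochster dual'' argument made explicit; for (2) you prove $\wX^\da=\wX$ outright (using that in a Stone space the quasi-compact opens are the clopens) and then invoke Lemma~\ref{scat}, whereas the paper checks the isolation condition more by hand---but the underlying observations (Hochster-closed $\Rightarrow$ Zariski-closed, and the isolating open can be taken Hochster-open because its complement is clopen) are the same.
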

\begin{proof}
The closed points of any Noetherian spectral space are isolated in the Hochster dual.  Since any closed subset of a Noetherian topological space is also a Noetherian spectral space, this proves (1). This is essentially the argument in \cite[Lemma 4.3]{Stevenson17}.

Assume (2).  Then any Hochster closed set $\wU\sbe \wX$ is Zariski closed. By Lemma \ref{scat} $\wX$ is scattered, so there exists a point  $\p$ such that $\{\p\}=\wV\cap\wU$ for some Zariski open set $\wV$.  We may assume that $\wV$ is quasi-compact open.  We claim that $\wco{\wV}$ is a quasi-compact clopen set, and thus $\wV$ is Hochster open.  The closed set  $\wco{\wV}$  is open in the constructible topology and thus also in the Zariski topology.  Thus $\wco{\wV}$ is clopen.  But clopen sets are automatically  quasi-compact, completing the proof.  
%
\end{proof}

Therefore if $\T$ is a rigidly compactly generated tensor triangulated category, $\T$ is supportive if $\spc\T^c$ is one of the above spaces. Stevenson proved this  in \cite{Stevenson17} assuming that $\T$ has a monoidal model. A close examination of Stevenson's paper implies a much stronger result.

\begin{theorem}\label{loctoglob}

Suppose $\T$ is a rigidly compactly generated tensor triangulated category and has monoidal model.  Suppose further that $\spc \T^c$ is Hochster scattered.  Then $\T$ satisfies the local-to-global principle, i.e. for every $T\in \T$ 
\[T\in \loc^\tns (\Ga_\p T\mid \p\in \supp T).\]

\end{theorem}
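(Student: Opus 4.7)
The plan is transfinite induction on the Cantor--Bendixon rank of $\hspc \T^c$, which exists by Lemma \ref{scat} since $\spc\T^c$ is Hochster scattered. Fix $T \in \T$ and set
\[\cL = \loc^\tns\bigl(\Ga_\p T \mid \p \in \supp T\bigr),\]
the localising tensor ideal we wish to show contains $T$. Let $\{\wY_{\le \al}\}$ denote the Cantor--Bendixon filtration of $\wY = \hspc \T^c$; each $\wY_{\le \al}$ is open in $\hspc \T^c$, hence a Thomason subset of $\spc\T^c$. The inductive claim is that $\Ga_{\wY_{\le \al}} T \in \cL$ for every ordinal $\al$; taking $\al$ large enough that $\wY_{\le \al} = \wY$ gives $T = \Ga_\wY T \in \cL$. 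The base case $\al=0$ is trivial, as $\Ga_{\emptyset} T = 0$.

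For the successor step, assume $\Ga_{\wY_{\le \al}} T \in \cL$. Applying $\Ga_{\wY_{\le \al+1}}$ to the localisation triangle $\Ga_{\wY_{\le \al}} T \to T \to L_{\wY_{\le \al}} T \to$ and using Theorem \ref{idempotenttriangle} (\ref{Intersections}) to identify $\Ga_{\wY_{\le \al+1}} \Ga_{\wY_{\le \al}} \cong \Ga_{\wY_{\le \al}}$ yields a triangle
\[\Ga_{\wY_{\le \al}} T \to \Ga_{\wY_{\le \al+1}} T \to \Ga_{\wY_{\le \al+1}} L_{\wY_{\le \al}} T \to,\]
so it suffices to show the third term is in $\cL$. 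The set $\wW = \wY_{\le \al+1} \setminus \wY_{\le \al}$ consists precisely of the points isolated in $\wY_{>\al}$; for each $\p \in \wW$ there is a Thomason $\wV_\p$ with $\{\p\} = \wV_\p \cap \wco{\wY_{\le \al}}$, so $\p$ is visible and $\Ga_\p 1 \cong \Ga_{\wV_\p} L_{\wY_{\le \al}} 1$. Setting $\wV = \bigcup_{\p \in \wW} \wV_\p$, one checks $\wV \cap \wco{\wY_{\le \al}} = \wW$, so Theorem \ref{idempotenttriangle} (\ref{main1h}) identifies the third term with $\Ga_\wV L_{\wY_{\le \al}} T$. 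For the limit step, $\wY_{\le \al} = \bigcup_{\be < \al} \wY_{\le \be}$, and the goal is to realise $\Ga_{\wY_{\le \al}} T$ as a homotopy colimit of the objects $\Ga_{\wY_{\le \be}} T$, each in $\cL$ by induction.

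The main obstacle---and the sole place the monoidal model hypothesis is used---is showing that both of these localisations are genuine filtered homotopy colimits of their finite-stage pieces. For the successor step, filter $\wV$ by finite subunions $\wV' = \wV_{\p_1} \cup \cdots \cup \wV_{\p_n}$. Iterating the Mayer--Vietoris triangle for smashing localisations (a consequence of Theorem \ref{idempotenttriangle} (\ref{Intersections})), together with the observation that $\wV_{\p_i} \cap \wV_{\p_j}$ has empty intersection with $\wco{\wY_{\le \al}}$ for $\p_i \ne \p_j$, one sees that $\Ga_{\wV'} L_{\wY_{\le \al}} T$ lies in the thick subcategory generated by the Rickard pieces $\Ga_{\p_i} T$, hence in $\cL$. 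Following the strategy of Stevenson in \cite{Stevenson17}, the monoidal model then allows one to realise $\Ga_\wV L_{\wY_{\le \al}} T$ as the honest filtered homotopy colimit of these finite pieces, and the analogous construction handles the limit ordinal step. Since $\cL$ is localising, membership persists through the colimit, closing the induction.
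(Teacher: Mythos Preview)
Your argument is correct, and at its core it is the same transfinite induction on the Cantor--Bendixon rank of $\hspc\T^c$ that underlies the paper's proof. The difference is one of packaging: the paper does not carry out the induction itself but instead verifies that the class $\gX$ of Hochster scattered spectral spaces, equipped with the Cantor--Bendixon level as a dimension function, satisfies the hypotheses of Stevenson's general local-to-global theorem (\cite[Theorem 3.7, Lemma 3.13]{Stevenson17}), the key check being that $\gX$ is closed under passing to complements of Thomason subsets. You instead run the induction by hand inside $\T$, peeling off the isolated layer $\wW=\wY_{\le\al+1}\setminus\wY_{\le\al}$ at each successor stage via the triangle $\Ga_{\wY_{\le\al}}T\to\Ga_{\wY_{\le\al+1}}T\to\Ga_\wV L_{\wY_{\le\al}}T\to$, and then invoking Stevenson only for the honest-homotopy-colimit step. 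Both approaches ultimately lean on the same monoidal-model input from \cite{Stevenson17} at the same place; your version is more transparent about the mechanism, while the paper's is shorter and makes clear that the result is a formal consequence of Stevenson's framework once one observes that subspaces of scattered spaces are scattered.
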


\begin{proof}

Let $\wX$ be the collection of all Hochster scattered spectral spaces.  For every $\wX\in \gX$, let $\dim_{\wX} \wX\to \mbox{Ord}$ denote the Cantor-Bendixon level computed in the Hochster dual.  These functions are well defined by Lemma \ref{scat}.  The collection $\cD=\{\dim_{\wX}\mid \wX\in \gX\}$ is what Stevenson calls a class of spectral dimension functions compatible with $\gX$; see \cite[Definition 3.5]{Stevenson17}.  In Theorem 3.7 and Lemma 3.13 of \cite{Stevenson17}, Stevenson shows that $\T$ satisfies the local-to-global principle if $\spc \T^c\in \gX$ and $\cD$ is closed under spectral subspaces.   However, an examination of the proof of Theorem 3.7 reveals that we only need to show that $\gX$ is closed under complements of Thomason subsets.

So we now check that $\gX$ has this property. Let $\wX\in \gX$ and take a Thomason subset $\wV\sbe \bX$.   Its complement, $\wco{\wV}$, is a spectral subspace and its Hochster dual is a subspace of $\wX^\da$.   Since subspaces of scattered spaces are scattered, ${(\wco{\wV})}^\da$ is scattered, and so $\wco{\wV}\in \gX$.
\end{proof}
%

\begin{example}
Let $R$ be an absolutely flat ring that is not semi-Artinian. In \cite[Theorem 4.7]{Stevenson14}, Stevenson shows that local-to-global principle fails in $D(R)$, and so $\spec R$ cannot be Hochster scattered by Theorem \ref{loctoglob}.  Furthermore, $\spec R$ is not Hochster weakly scattered: the Krull dimension of $R$ is 0, and so every prime is visible.  However, $D(R)$ is supportive either by Stevenson's arguments in loc.cit.\ or by Theorem \ref{main6}. Therefore $\T$ supportive does not imply $\spc \T^c$ is Hochster weakly scattered.
\end{example}

We summarise the following implications for a spectral space $\wX$
\[\mbox{Noetherian}\implies \mbox{Hochster scattered}\implies \mbox{all points are visible}.\]
Moreover, these implications are strict.

\begin{example}

Let $\wX=\bN\cup\infty$. Let $\wV_n=[n,\infty]$ be the nontrivial open sets of $\wX$. The space $\wX$ is spectral but not Noetherian.  The non-trivial Hochster open sets are of the form $[1,n]$ with $n\in \NN$.  In particular, $1$ is an open point in $\wX$, and $n+1$ is isolated in the Hochster closed set $[n+1,\infty]$.  Thus $\wX$ is Hochster scattered.  

\end{example}

The following example was communicated to the author by Bill Fleissner.  

\begin{example}\label{cantorexam}

Let $\wC$ denote the Cantor set.  This space is spectral,  and so let $\wX=\wC^\da$.   Now $\wX^\da=\wC$, and so the Hochster dual of $\wX$ is $\sT_{\frac{1}{2}}$.  Thus every point is visible by Lemma \ref{Noeth}.  However, $\wX$ is not Hochster scattered.  

\end{example}

\section{Questions}\label{questionstuff}

\subsection{Supportiveness}

Let $\T$ be a rigidly compactly generated tensor triangulated category. This article raises the following question.

\begin{question}\label{Q1}
Is $\T$ always supportive? 
\end{question}

By our work in Section \ref{framestuff},  supportiveness is a restriction on the idempotent Bousfield  lattice.  Suppose for instance, that $\spc\T^c$ is not Hochster weakly scattered and $\T$ is supportive.  
Then $\Ab{\T}$ cannot be isomorphic to the assembly $\Neu{\TT{\T^c}}$ by Theorem \ref{main4}, and so $\Ab{\T}$ cannot be any old frame.  This may not seem very strong, but the  Bousfield lattice is notoriously ill-behaved, see for example \cite{Neeman00}, \cite{DwyerPalmieri08}, \cite[Section 4]{Stevenson14}, and\cite{Stevenson17a}.  Thus a negative answer to Question \ref{Q1} is likely.

On the other hand, Theorem \ref{intro3} describes three disparate cases where supportiveness hold, suggesting that supportiveness might be common.     

\begin{example}

As we learned in Example \ref{cantorexam}, there exists a commutative ring $R$ such that ${(\spec R)}^\da$ is the Cantor set.  Thus $\spec R$ is not Hochster weakly scattered.  However, since the Cantor set has Krull dimension 0, so does $\spec R$.  Thus $\DD{R}$ is supportive by Theorem \ref{main6}.  This example generalises to any dimension 0 that is not Hochster weakly scattered.   

\end{example}

This example shows that for commutative rings, $\DD{R}$ is supportive  not for topological reasons but for algebraic reasons.  

\begin{question}

Is $\DD{R}$ supportive for all commutative rings $R$?
\end{question}

%

\subsection{Smashing subcategories}

Assume as usual that $\T$ is  rigidly compactly generated tensor triangulated category, but now assume that $\T$ has a monoidal model.   Let $\bS$ be the lattice of smashing subcategories.  Recall that for every $\cS\in \bS$, there are coproduct preserving local cohomology and localisation functors $\Ga_\cS$ and $L_\cS$ and an idempotent triangle 
\[\Ga_{\cS} 1\to 1\to L_{\cS} 1\to .\]
See \cite{BalmerFavi11}.  In \cite{BKS17}, the authors prove that $\bS$ is a frame, and moreover, by \cite[Theorem 3.5]{BalmerFavi11} the assignment $\cS\mapsto \Ac{\Ga_\cS 1}$ defines a frame homomorphism $\ph\colon\bS\to \Ab{\T}$.  This homomorphism is complemented since $\Ac{\Ga_\cS1}$ and $\Ac{L_\cS1}$ are complements in $\Ab{\T}$.  

Therefore we can apply the results of Theorem \ref{skula} concerning the assembly and and the analysis of Section \ref{wss} extends to this general case.  In particular we have the following commutative diagram of frame homomorphisms.  
\[\xymatrix{
			& \TT{\T^c} \ar[dl]_{\bF\wf_{\hspc \T^c}}  \ar[d]^{\al_{\bT(\T^c)}} \ar@{^{(}->}[r]	& \bS  \ar[d]^{\al_{\bS}} \ar[r]^\ph	&\Ab{\T} \\
\Fr{\lspc \T}	&\Neu{\TT{\T^c}} \ar[l]^{\sm_{\hspc \T^c}} \ar[r] 							& \Neu{\bS} \ar[ur]_{\tilde{\ph}} \\
}\]

Now suppose for a moment that $\bS$ is a spatial frame.  Then the open sets of $\spc \bS$ are in bijection with $\bS$.  For every $\cS\in \bS$, let $\wD(\cS)$ be the associated open set.  

\begin{definition}

Let $T\in \T$. A point $\p\in \spc \bS$ is in $\smsupp T$ if for every $\cS,\cS'\in \bS$ with $\p\in \wD(\cS)\cap\wco{\wD(\cS')}$, we have
\[\Ga_{\cS} L_{\cS'} T\ne 0.\]
We say that $\T$ is smashing supportive if $\smsupp T=\emptyset$ if and only if $T=0$,  
\end{definition}

The main results of this paper, Theorems \ref{main1}, \ref{main2},  \ref{main3},  \ref{main4}, and  \ref{main5} all generalise to the smashing context, motivating the following.

\begin{question}

Is the smashing frame $\bS$ spatial?  When is $\T$ smashing supportive?
\end{question}

\subsection{Subframes of the Bousfield lattice}\label{Q3}

In Remark \ref{subframe} we observed that $\T$ is supportive  if the image of $\ga_\T\colon \:\TT{\T^c}\to \Ab{\T}$ lies in a spatial frame containing $\Ac{L_\wV 1}$ for all $\wV\in \TT{\T^c}$.  When can we apply this remark?   Recall from Example \ref{Lebesgue} that not all Boolean algebras are spatial.

\begin{question}\label{Q3q}

Suppose $\bB$ is a Boolean algebra and $\bF\sbe \bB$ is a spatial frame such that the inclusion map is a frame homomorphism.  Let $\bar{\bF}\sbe \bB$ be the smallest subset of $\bB$ closed under arbitrary joins, finite meets, and complements.   Is the complete Boolean algebra $\bar{\bF}$ always atomic?

\end{question}
 
Suppose Question \ref{Q3q} has an affirmative answer.  Then $\T$ is supportive whenever $\Ab{\T}$ is Boolean: let us explain. Since $\ga_\T$ is injective, $\im{\ga_\T}$ is a subframe.  Thus, an affirmative answer to the question implies that $\overline{\im{\ga_\T}}$ is spatial satisfies the hypotheses of Remark \ref{subframe}.  Hence $\T$ is supportive.  

Occasionally, $\Ab{\T}$ is indeed Boolean.  For instance every Boolean algebra is $\Ab{\T}$ for some $\T$ satisfying our usual assumptions by \cite{Stevenson17a}.  

\begin{lemma}
If $\T$ has no nilpotent elements, i.e.\ $T\tns T=0$ implies $T=0$, then every Bousfield class is idempotent and $\Ab{\T}$ is Boolean.  
\end{lemma}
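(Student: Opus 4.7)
My plan is to separate the two conclusions, both of which reduce quickly to the hypothesis that $R\tns R=0$ implies $R=0$.

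I will first check that every Bousfield class is idempotent.  The inclusion $\Ac{T}\sbe \Ac{T\tns T}$ is automatic, so the content is the reverse.  Given $S$ with $S\tns T\tns T=0$, set $R=S\tns T$.  Using the symmetry of the tensor product and associativity,
\[ R\tns R \cong S\tns S\tns T\tns T \cong S\tns(S\tns T\tns T)=0, \]
so the hypothesis forces $R=0$, i.e.\ $S\tns T=0$, meaning $S\in \Ac{T}$.  Thus $\Ac{T}=\Ac{T\tns T}$ for every $T$.

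For the Boolean statement, I will construct an explicit complement for an arbitrary $\Ac{T}\in \Ab{\T}$.  Since $\Ab{\T}$ is a set (by the Ohkawa / Iyengar-Krause result already cited), the sub-collection $\cB=\{\Ac{S}\mid S\in \Ac{T}\}$ is a set; for each $\beta\in\cB$ pick a representative object $S_\beta\in \Ac{T}$ with $\Ac{S_\beta}=\beta$, and set
\[X=\coprod_{\beta\in\cB} S_\beta.\]
Because the tensor product preserves coproducts in each variable,
\[T\tns X \cong \coprod_{\beta} T\tns S_\beta = 0,\]
so $\Ac{T}\mt \Ac{X}=\Ac{T\tns X}=\Ac{0}$.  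If $S\in \Ac{T}\cap \Ac{X}$, then $\Ac{S}\in \cB$, so $\Ac{S_\beta}=\Ac{S}$ for some $\beta$; and $X\tns S=0$ forces $S_\beta\tns S=0$.  Since $\Ac{S_\beta}=\Ac{S}$, this vanishing is equivalent to $S\tns S=0$, and the hypothesis yields $S=0$.  Hence $\Ac{T}\jn \Ac{X}=\Ac{T}\cap\Ac{X}=\{0\}=\Ac{1}$, so $\Ac{X}$ is a complement of $\Ac{T}$, and $\Ab{\T}$ is Boolean.

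The only obstacle is a bookkeeping one: I need to know that the Bousfield classes appearing inside $\Ac{T}$ form a set---which follows from Ohkawa's theorem---and that each such class admits a representative actually lying in $\Ac{T}$, which is automatic because $T\tns S=0$ depends on $S$ only through its Bousfield class $\Ac{S}$.
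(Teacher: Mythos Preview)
Your proof is correct. The first half matches the paper's argument essentially verbatim. For the second half, the paper takes a slightly more frame-theoretic route: it identifies your object $\Ac{X}$ with the pseudo-complement $\Ac{T}^*$ (computing $\Ac{T}^*=\bigjn_{S\in\Ac{T}}\Ac{S}=T^{\perp\perp}$) and then shows $\Ac{T}^{**}=\Ac{T}$, invoking the standard fact that a frame in which every element equals its double pseudo-complement is Boolean. You instead verify the complement axioms for $\Ac{X}$ directly. The two arguments build the same object; your verification is a bit more hands-on and avoids the appeal to the $x^{**}=x$ criterion, while the paper's formulation makes the connection to the Heyting structure of $\Ab{\T}$ explicit. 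One small remark: you should note (as you implicitly use) that $\Ac{X}\in\Ab{\T}$, but this is immediate from your first paragraph since every Bousfield class is now idempotent.
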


\begin{proof}
First, if $\Ac{T}\ne \Ac{T\tns T}$, then there exists an $X\in \T$ such that $T\tns X\ne 0$ but $T\tns T\tns X=0$.  It follows that $T\tns X$ is nilpotent. We now prove the second statement. 

For a class $\cX\sbe \T$, set
\[\cX^\perp=\{T\in \T\mid T\tns X=0\ \forall X\in\cX\}.\]
Standard formalism implies that 
\[\cX\sbe \cX^{\perp\perp}\quad\quad\mbox{and}\quad\quad \cX\sbe \cY\implies \cY^\perp\sbe \cX^\perp\]
allowing us to conclude 
\[\cX^\perp=\cX^{\perp\perp\perp}.\]
By definition, for a single object $T\in \T$, 
\[T^\perp=\Ac{T}.\]

Let $\Ac{T}^*$ be the pseudo-complement of an element $\Ac{T}\in \Ab{\T}$.  We claim that $\Ac{T}^{**}=\Ac{T}$.  Since $\Ac{X}\in \Ab{\T}$ for every $X\in \T$, we compute
\[\Ac{T}^*=\bigjn_{\substack{\Ac{S}\in \Ab{\T} \\ \Ac{S}\mt \Ac{T}=\Ac{0}}} \Ac{S}=\bigjn_{\substack{S\in\T \\ S\tns T=0}} \Ac{S}=\bigcap_{X\in T^\perp} S^\perp=T^{\perp\perp}=\Ac{T}^\perp.\]
We now have
\[\Ac{T}^{**}={\Ac{T}^*}^\perp=T^{\perp\perp\perp}=T^\perp=\Ac{T}\]
where the second equality is the second to last equality of the previous calculation.  
\end{proof}

\subsection{Localising tensor ideals}

In Theorem \ref{main1} and Theorem \ref{main2}, we saw that $\supp$ gives the following maps.
\[\left\{\substack{ \mbox{Localising} \\ \mbox{tensor ideals} \\ \mbox{of } \T}\right\}
\substack{\xrightarrow{\supp} \\ \xleftarrow{\supp^{-1}}}
\left\{\substack{ \mbox{Closed } \\ \mbox{subsets} \\ \mbox{of } \lspc \T}\right\}\]

\begin{question}\label{bijideal}
When are the localising tensor ideals in bijection with the closed subsets of $\lspc \T$?
\end{question}

The appearance of closed subsets is surprising.  In Theorem \ref{class}, we saw that the radical tensor ideals of $\T^c$ are in bijection with the open sets, and thus form a frame.  However, if the closed subsets of $\lspc \T$ classify the localising tensor ideals, then the localising tensor ideals form a \textit{co-frame}, a lattice whose opposite lattice is a frame.  This motivates the following question.
\begin{question}\label{coframe}
Is the lattice of localising tensor ideals a co-frame?  Equivalently, is the lattice of localising tensor ideals a frame when ordered by reverse inclusion?
\end{question}

This question has a positive answer when every localising tensor ideal is an idempotent Bousfield class, in which case $\Ab{\T}$ is the lattice in question.

It is not known whether or not the collection of localising tensor ideals is a set.  But  Question \ref{coframe} is independent of such foundational concerns, since frames and coframes need not be sets.  

\section*{Acknowledgements}

We thank John Reynolds for many conversations about point set topology.  We also thank Bill Fleissner for his examples concerning scattered spaces and his interest in this work.  We are also deeply indebted to Greg Stevenson, whose comments and careful reading of an early draft  greatly improved this article.

\bibliographystyle{amsplain}
\bibliography{/Users/williats/aamath/communications/BibliographyII}

\end{document}